\DeclareSymbolFont{cyrletters}{OT2}{wncyr}{m}{n}
\DeclareMathSymbol{\Sha}{\mathalpha}{cyrletters}{"58}
\theoremstyle{plain}
\theoremstyle{definition}
\newtheorem{theorem}{Theorem}[subsection]
\newtheorem{thm}{Theorem}[subsubsection]
\newtheorem{lemma}[theorem]{Lemma}
\newtheorem{corollary}[theorem]{Corollary}
\newtheorem{proposition}[theorem]{Proposition}
\newtheorem{prop}[theorem]{Proposition}
\theoremstyle{definition}
\newtheorem{definition}[theorem]{Definition}
\newtheorem{qu}[theorem]{Question}
\newtheorem{eg}[theorem]{Example}
\newtheorem{conjecture}[theorem]{Conjecture}
\newtheorem{exceptions}[thm]{Exceptions}
\theoremstyle{remark}
\newtheorem{remark}[theorem]{Remark}
\newtheorem{recall}[theorem]{Recall}
\renewcommand{\AA}{\mathbb{A}}
\newcommand{\QQ}{\mathbb{Q}}
\newcommand{\RR}{\mathbb{R}}
\newcommand{\ZZ}{\mathbb{Z}}
\newcommand{\FF}{\mathbb{F}}
\newcommand{\GG}{\mathbb{G}}
\newcommand{\PP}{\mathbb{P}}
\newcommand{\Ecal}{{\mathcal E}}
\newcommand{\Lcal}{{\mathcal L}}
\newcommand{\Ocal}{{\mathcal O}}
\newcommand{\Lscr}{{\mathscr L}}
\newcommand{\half}{1/2}
\newcommand{\inuap}{\inv_{v'}(A(P_{v'}))}
\newcommand{\invap}{\inv_v(A(P_v))}
\newcommand{\invaq}{\inv_v(A(Q_v))}
\newcommand{\invaqo}{\inv_v(A(Q_0))}
\DeclareMathOperator{\Gal}{Gal}
\DeclareMathOperator{\inv}{inv}
\DeclareMathOperator{\Proj}{Proj}
\DeclareMathOperator{\Sym}{Sym}
\DeclareMathOperator{\Spec}{Spec}
\newcommand{\defi}[1]{\textsf{#1}} 
\newcommand{\Br}{\textup{Br}}
\newcommand{\et}{\textup{\'et}}
\g@addto@macro\bfseries{\boldmath}  
\begin{document}
	
	\begin{title}
		{Ch\^atelet surfaces and non-invariance of the Brauer-Manin obstruction for $3$-folds}  
	\end{title}
	\author{Han Wu}
	\address{University of Science and Technology of China,
		School of Mathematical Sciences,
		No.96, JinZhai Road, Baohe District, Hefei,
		Anhui, 230026. P.R.China.}
	\email{wuhan90@mail.ustc.edu.cn}
	\date{}
	\subjclass[2020]{11G35, 14G12, 14G25, 14G05.}
	\keywords{rational points, Hass principle, weak approximation, Brauer-Manin obstruction, Ch\^atelet surfaces, Ch\^atelet surface bundles over curves.}




	\begin{abstract} 
		In this paper, we construct three kinds of Ch\^atelet surfaces, which have some given arithmetic properties with respect to field extensions of number fields. We then use these constructions to study the properties of weak approximation with Brauer-Manin obstruction and the Hasse principle with Brauer-Manin obstruction for $3$-folds, which are pencils of Ch\^atelet surfaces parameterized by a curve, with respect to field extensions of number fields. We give general constructions (conditional on a conjecture of M. Stoll) to negatively answer some questions, and illustrate these constructions and some exceptions with some explicit unconditional examples.
	\end{abstract} 
	
	\maketitle

	\section{Introduction}
	
	\subsection{Background}
	Let $X$ be a proper algebraic variety defined over a number field $K.$ Let $\Omega_K$ be the set of all nontrivial places of $K.$ Let $\infty_K\subset \Omega_K$ be the subset of all archimedean places, and let $S\subset \Omega_K$ be a finite subset. Let $K_v$ be the completion of $K$ at $v\in \Omega_K.$ Let $\AA_K$ be the ring of ad\`eles of $K.$ If the set of $K$-rational points $X(K)\neq\emptyset,$ then the set of adelic points $X(\AA_K)\neq\emptyset.$  The converse is known as the Hasse principle. We say that $X$ is a \defi{counterexample to the Hasse principle} if $X(\AA_K)\neq\emptyset$ whereas $X(K)=\emptyset.$ A well know counterexample to the Hasse principle is Selmer's cubic curve defined over $\QQ$ by
	$3w_0^3+4w_1^3+5w_2^3=0$ with homogeneous coordinates $(w_0:w_1:w_2)\in \PP^2.$ Let $pr^S\colon\AA_K\to \AA_K^S$ be the natural projection of the ring of ad\`eles and ad\`eles without $S$ components, which induces a natural projection $pr^S\colon X(\AA_K)\to X(\AA_K^S)$ if $X(\AA_K)\neq \emptyset.$ For $X$ is proper, the set of adelic points $X(\AA_K^S)$ is equal to $\prod_{v\in \Omega_K\backslash S}X(K_v),$ and the adelic topology of $X(\AA_K^S)$  is indeed the product topology of $v$-adic topologies. 
	Viewing $X(K)$ as a subset of $X(\AA_K)$ (respectively of $X(\AA_K^S)$) by the diagonal embedding, we say that $X$
	satisfies \defi{weak approximation} (respectively \defi{weak approximation off $S$}) if $X(K)$ is dense in $X(\AA_K)$ (respectively in $X(\AA_K^S)$), cf. \cite[Chapter 5.1]{Sk01}. Cohomological obstructions have been used to explain failures of the Hasse principle and nondensity of $X(K)$ in $X(\AA_K^S).$  Let $\Br(X)=H^2_{\et}(X,\GG_m)$ be the Brauer group of $X.$ The Brauer-Manin pairing
	$$X(\AA_K)\times\Br(X)\to \QQ/\ZZ,$$ 
	suggested by Brauer-Manin \cite{Ma71}, between $X(\AA_K)$ and $\Br(X),$ is provided by local class field theory. The left kernel of this pairing is denoted by $X(\AA_K)^{\Br},$ which is a closed subset of $X(\AA_K).$ By the global reciprocity in class field theory, there is an exact sequence: 
	$$0\to \Br(K)\to \bigoplus\limits_{v\in \Omega_K} \Br(K_v)\to \QQ/\ZZ\to 0.$$
	It induces an inclusion: $X(K)\subset pr^S(X(\AA_K)^\Br).$ We say that the failure of the Hasse principle of $X$ is \defi{explained by the Brauer-Manin obstruction} if  $X(\AA_K)\neq\emptyset$ and $X(\AA_K)^{\Br}=\emptyset.$ For the failure of the Hasse principle of a smooth, projective, and geometrically connected curve over $K,$ if the Tate-Shafarevich group and the rational points set of its Jacobian are finite, then this failure can be explained by the Brauer-Manin obstruction, cf. \cite{Sc99}. A counterexample to the Hasse principle such that its failure of the Hasse principle is not explained by the Brauer-Manin obstruction, was constructed firstly by Skorobogatov \cite{Sk99}, subsequently by Poonen \cite{Po10}, Harpaz and Skorobogatov \cite{HS14}, Colliot-Th\'el\`ene, P\'al and Skorobogatov \cite{CTPS16} and so on.
	We say that $X$ satisfies \defi{weak approximation with Brauer-Manin obstruction} (respectively \defi{with Brauer-Manin obstruction off $S$}) if $X(K)$ is dense in $X(\AA_K)^\Br$ (respectively in $pr^S(X(\AA_K)^\Br)$). For an elliptic curve defined over $\QQ,$ if its analytic rank equals one, then this elliptic curve satisfies weak approximation with Brauer-Manin obstruction, cf. \cite{Wa96}. For an abelian variety defined over $K,$ if its Tate-Shafarevich group is finite, then this abelian variety satisfies weak approximation with Brauer-Manin obstruction off $\infty_K,$ cf. \cite[Proposition 6.2.4]{Sk01}. 
	For any smooth, proper and rationally connected variety $X$ defined over an number field, it is conjectured by J.-L. Colliot-Th\'el\`ene that $X$ satisfies weak approximation with Brauer-Manin obstruction. The Colliot-Th\'el\`ene's conjecture holds for Ch\^atelet surfaces, cf. \cite{CTSSD87a,CTSSD87b}.
	For any smooth, projective, and geometrically connected curve $C$ defined over an number field $K,$ it is conjectured by Stoll \cite{St07} that $C$ satisfies weak approximation with Brauer-Manin obstruction off $\infty_K\colon$ see Conjecture \ref{conjecture Stoll} for more details.
	Before the Stoll's conjecture, if this curve $C$ is a counterexample to the Hasse principle,
	Skorobogatov \cite[Chapter 6.3]{Sk01} asked a weaker open question: is the failure of the Hasse principle of $C$ explained by the Brauer-Manin obstruction?
	
	\subsection{Questions}\label{Questions}
	Given a nontrivial extension of number fields $L/K,$ and a finite subset $S\subset\Omega_K,$ let $S_L\subset \Omega_L$ be the subset of all places above $S.$ Let $X$ be a smooth, projective, and geometrically connected variety defined over $K.$ Let $X_L=X\times_{\Spec K} {\Spec L}$ be the base change of $X$ by $L.$ In this paper, we consider the following questions.
	\begin{qu}\label{question on WA1}
		If the variety $X$ has a $K$-rational point, and satisfies weak approximation with Brauer-Manin obstruction off $S,$ must $X_L$ also satisfy weak approximation with Brauer-Manin obstruction off $S_L?$	
	\end{qu}
	\begin{qu}\label{question on HP}		
		Assume that the varieties $X$ and $X_L$ are counterexamples to the Hasse principle. If the failure of the Hasse principle of $X$ is explained by the Brauer-Manin obstruction, must the failure of the Hasse principle of $X_L$ also be explained by the Brauer-Manin obstruction?		
	\end{qu}
	
	\subsection{Main results for Ch\^atelet surfaces}\label{introduction Main results for Ch\^atelet surfaces}
	A Ch\^atelet surface defined over $\QQ,$ which is a counterexample to the Hasse principle, was constructed by Iskovskikh \cite{Is71}. Poonen \cite{Po09} generalized it to any given number field. For any number field $K,$ Liang \cite{Li18} constructed a Ch\^atelet surface defined over $K,$ which has a $K$-rational point and does not satisfy weak approximation off $\infty_K.$ By using weak approximation and strong approximation off all $2$-adic places for $\AA^1$ (cf. Lemma \ref{lemma strong approximation for A^1}) to choose elements in $K,$ and using \v{C}ebotarev's density theorem (cf. Theorem \ref{theorem Chebotarev density theorem}) to add some splitting conditions, we construct three kinds of Ch\^atelet surfaces to generalize their arguments.
	
	\begin{proposition}	
		For any extension of number fields $L/K,$ and any finite subset $S \subset \Omega_K\backslash \{$all complex and $2$-adic places$\}$ splitting completely in $L,$ 
		there exist Ch\^atelet surfaces $V_1, V_2, V_3$ defined over $K,$ which have the following properties.
		\begin{itemize}
			\item  The subset $V_1(\AA_K^S)\subset V_1(\AA_L^{S_L})$ is nonempty, but $V_1(K_v)=V_1(L_{v'})=\emptyset$  for all $v\in S$ and all $v'\in S_L,$ cf. Proposition \ref{proposition no local point for S}.
			\item The Brauer group $\Br(V_2)/\Br(K)\cong\Br(V_{2L})/\Br(L)\cong \ZZ/2\ZZ,$ is generated by an element $A\in \Br(V_2).$ The subset $V_2(K)\subset V_2(L)$ is nonempty. \\		
			For any $v\in S,$ there exist $P_v$ and $Q_v$ in $V_2(K_v)$ such that the local invariants $\inv_v(A(P_v))=0$ and $\inv_v(A(Q_v))=\half.$  For any other $v\notin S,$ and any $P_v\in V_2(K_v),$ the local invariant $\invap=0.$\\		
			For any $v'\in S_L,$ there exist $P_{v'}$ and $Q_{v'}$ in $V_2(L_{v'})$ such that the local invariants $\inv_{v'}(A(P_{v'}))=0$ and $\inv_{v'}(A(Q_{v'}))=\half.$ For any other $v'\notin S_L,$ and any $P_{v'}\in V_2(L_{v'}),$ the local invariant $\inv_{v'}(A(P_{v'})) =0,$ cf. Proposition \ref{proposition the valuation of Brauer group on local points are fixed outside S and take two value on S}.
			\item The Brauer group $\Br(V_3)/\Br(K)\cong\Br(V_{3L})/\Br(L)\cong \ZZ/2\ZZ,$ is generated by an element $A\in \Br(V_3).$ The subset $V_3(\AA_K)\subset V_3(\AA_L)$ is nonempty.\\
			For any $v\in \Omega_K,$ and any $P_v\in V_3(K_v),$ the local invariant $\inv_v(A(P_v))=0$ if $v\notin S;$ the local invariant $\inv_v(A(P_v))=\half$ if $v\in S.$\\
			For any $v'\in \Omega_L,$ and any $P_{v'}\in V_3(L_{v'}),$ the local invariant $\inv_{v'}(A(P_{v'}))=0$ if $v'\notin S_L;$ the local invariant $\inv_{v'}(A(P_{v'}))=\half$ if $v'\in S_L,$ cf. Proposition \ref{proposition the valuation of Brauer group on local points are fixed and on S nontrivial}.
		\end{itemize}
	\end{proposition}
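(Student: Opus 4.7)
The three surfaces $V_1,V_2,V_3$ will be constructed as Ch\^atelet surfaces in the standard form
\[ V_i : y^2 - a_i z^2 = P_i(x), \]
with $a_i\in K^\ast$ and $P_i(x)\in K[x]$ a separable polynomial of degree $3$ or $4$. In this presentation, as soon as $a_i$ is a non-square in $L$ (hence also in $K$), the Brauer group modulo constants is $\ZZ/2\ZZ$ both over $K$ and over $L$, generated by the class of the quaternion algebra $A=(a_i,P_i(x))$; every local condition in the three bullets then translates into a condition on the Hilbert symbols $(a_i,P_i(x))_v$. Because $S$ splits completely in $L$, for each $v\in S$ and each $v'\in S_L$ above $v$ one has $K_v=L_{v'}$, so any local condition imposed at $v\in S$ transfers verbatim to $v'\in S_L$; the nontrivial work is then (i) keeping $a_i$ a non-square after base change to $L$, and (ii) controlling the local invariants at all places \emph{outside} $S$.

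The plan is to choose $a=a_i$ first, then $P_i(x)$. Using weak approximation on $K^\ast$ together with \v{C}ebotarev's density theorem (Theorem \ref{theorem Chebotarev density theorem}), I would pick $a$ so that the quadratic extension $K(\sqrt{a})/K$ is linearly disjoint from $L/K$ (which forces $a$ to remain a non-square in $L$), so that $a$ has prescribed Hilbert class at each $v\in S$, and so that $K(\sqrt{a})/K$ is unramified away from $S$ together with a controlled auxiliary finite set $T$. Next, I would apply Lemma \ref{lemma strong approximation for A^1} (weak approximation together with strong approximation off all $2$-adic places for $\AA^1$) to choose the coefficients of $P_i(x)$ so as to approximate explicit local templates $P_{i,v}(x)$ at every $v\in S\cup T$. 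The three templates at places of $S$ are the standard ones: for $V_1$, the Iskovskikh--Poonen template (no value of $P_{1,v}(x)$ is a norm from $K_v(\sqrt{a})$, so $V_1(K_v)=\emptyset$); for $V_2$, a template where $P_{2,v}(x)$ takes both norm and non-norm values, producing points $P_v,Q_v\in V_2(K_v)$ with $\inv_v(A(P_v))=0$ and $\inv_v(A(Q_v))=\half$; for $V_3$, a template forcing $(a,P_{3,v}(x))_v=\half$ on every local point, while $V_3(K_v)\neq\emptyset$ is simultaneously prescribed at every $v\in\Omega_K$.

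The main obstacle is not at the prescribed places of $S$, but at the infinitely many places outside $S$: one must verify that $\inv_v(A(P_v))=0$ for \emph{every} $v\notin S$ and every $P_v\in V_i(K_v)$, and then the analogous statement over $L$. This will be handled by arranging, through the \v{C}ebotarev-selected auxiliary set $T$, that every ramification prime of $K(\sqrt{a_i})/K$ and every prime of bad reduction of $P_i(x)$ lies inside $S\cup T$, and by choosing the templates at places of $T$ so that $(a,P_i(x))_v$ is forced locally trivial there---for instance, by factoring $P_i(x)$ as a product of two quadratics that are conjugate over $K(\sqrt{a})$, which makes $P_i(x)$ automatically a norm locally at every place split in $K(\sqrt{a})$. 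Finally, the linear disjointness of $K(\sqrt{a})$ and $L$ combined with the complete splitting of $S$ in $L$ ensures that for every place $w$ of $L$ the splitting type of $w$ in $L(\sqrt{a})/L$ matches that of its restriction in $K(\sqrt{a})/K$, so all the Hilbert-symbol computations carry over to $V_{iL}$ without loss.
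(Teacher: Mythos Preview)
Your overall strategy---choose $a$ first by approximation, then choose the coefficients of $P_i$ by strong approximation off $2_K$, and verify local invariants place by place---is exactly the shape of the paper's argument. But several of the specific mechanisms you invoke are wrong, and without them the sketch does not go through.

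First, the generator of $\Br(V_i)/\Br(K)$ is \emph{not} the quaternion algebra $(a_i,P_i(x))$: on $V_i$ one has $P_i(x)=y^2-a_iz^2$, so $(a_i,P_i(x))$ is already trivial in $\Br(V_i)$. When $P_i$ factors over $K$ as a product of two irreducible quadratics $q_1q_2$ (neither defining $K(\sqrt{a_i})$), the generator is $(a_i,q_1(x))=(a_i,q_2(x))$; see \cite[Propositions~7.1.1--7.1.2]{Sk01}. All the local-invariant statements in the second and third bullets are statements about $(a_i,q_1(x))_v$, not about $(a_i,P_i(x))_v$, and the distinction is the entire content of the computation. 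Relatedly, the sentence ``as soon as $a_i$ is a non-square in $L$ the Brauer group modulo constants is $\ZZ/2\ZZ$'' is false without a hypothesis on the factorisation type of $P_i$; for $V_1$ the paper in fact takes $P_1$ irreducible, so $\Br(V_1)/\Br(K)=0$ and the first bullet is purely about local solvability.

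Second, your device ``factor $P_i$ as a product of two quadratics conjugate over $K(\sqrt{a})$'' does not do what you want. If you mean $P_i=N_{K(\sqrt a)/K}(g)$ for a quadratic $g\in K(\sqrt a)[x]$, then indeed every value $P_i(x_0)$ is a norm, but that only says $(a,P_i(x_0))_v=1$, which is automatic on $V_i$ and tells you nothing about the actual generator $(a,q_1)$. If instead you mean that each $q_j$ defines $K(\sqrt a)$, then the Brauer group is no longer $\ZZ/2\ZZ$ (cf.\ Remark~\ref{remark birational to Hasse-Minkowski theorem} for the degenerate case).

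Third, the final claim---that linear disjointness of $K(\sqrt a)$ and $L$ together with $S$ splitting completely in $L$ forces the splitting type of \emph{every} place $w$ of $L$ in $L(\sqrt a)/L$ to match that of $w|_K$ in $K(\sqrt a)/K$---is false: an inert prime can split after base change to $L$ whenever the local degree $[L_w:K_v]$ is even. This is harmless in one direction (if $a$ becomes a square in $L_w$ the invariant is automatically zero), but it means you cannot simply transport the $K$-computation to $L$; the paper instead observes that the same Hilbert-symbol argument used over $K_v$ applies verbatim over $L_{v'}$, since the polynomials and the parameter $a$ are the same.

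What the paper actually does is write down explicit one-parameter families $P_1(x)=b(x^4-ac)$, $P_2(x)=(cx^2+1)((1+cb^2)x^2+b^2)$, $P_3(x)=(x^2-c)(bx^2-bc-1)$, with $b,c$ chosen by strong approximation off $2_K$ to satisfy short lists of valuation and Hilbert-symbol constraints (Subsubsections~\ref{subsubsection Choice of parameters for V1}, \ref{subsubsection Choice of parameters for V2}, \ref{subsubsection Choice of parameters for V3}), and then checks each place of $\Omega_K$ by a direct Hilbert-symbol computation using Lemmas~\ref{lemma hilbert symbal lifting for odd prime}--\ref{lemma openness for hilbert symbal with odd valuation element}. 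The architecture you propose is right, but you need the correct Brauer generator and concrete polynomial templates before the local analysis can even begin.
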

	
	Combining our construction method with the global reciprocity law, we have the following results for Ch\^atelet surfaces. 
	\begin{corollary}[Corollary \ref{interesting result for Chatelet surface1}]
		For any extension of number fields $L/K,$ and any finite nonempty subset  $S \subset \Omega_K\backslash \{$all complex and $2$-adic places$\}$ splitting completely in $L,$ 
		there exists a Ch\^atelet surface $V$ defined over $K$ such that $V(K)\neq \emptyset.$
		For any subfield $L'\subset L$ over $K,$ the Brauer group $\Br(V)/\Br(K)\cong\Br(V_{L'})/\Br(L')\cong \ZZ/2\ZZ.$ And the surface $V_{L'}$ has the following properties.
		\begin{itemize}
			\item For any finite subset $T'\subset \Omega_{L'}$ such that $T'\cap S_{L'}\neq \emptyset,$ the surface $V_{L'}$ satisfies weak approximation off $T'.$ 
			\item  For any finite subset $T'\subset \Omega_{L'}$ such that $T'\cap S_{L'}= \emptyset,$ the surface $V_{L'}$ does not satisfy weak approximation off $T'.$ In particular,  the surface $V_{L'}$ does not satisfy weak approximation.
		\end{itemize}
	\end{corollary}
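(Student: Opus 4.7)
The plan is to apply the preceding proposition to the given data $L/K$ and $S$ and set $V=V_2$. This immediately furnishes $V(K)\neq\emptyset$, the isomorphism $\Br(V)/\Br(K)\cong\ZZ/2\ZZ$ generated by a class $A$, and a complete description of the local invariants $\inv_v(A(\cdot))$ at every $v\in\Omega_K$. The two bullets will then follow by combining the Colliot-Th\'el\`ene--Sansuc--Swinnerton-Dyer theorem \cite{CTSSD87a,CTSSD87b} (weak approximation with Brauer--Manin obstruction for Ch\^atelet surfaces) with the global reciprocity law, once the local invariant pattern is transported from $K$ to each intermediate field $L'$.

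First I would transfer the invariant description to $L'$. Because $S$ splits completely in $L$ it also splits completely in $L'$, so for $v\in S$ and $v'\mid v$ one has $L'_{v'}=K_v$ and the two values $0,\half$ of $\inv_{v'}(A(\cdot))$ are realized on $V_{L'}(L'_{v'})$. At a place $v'\notin S_{L'}$ above some $v\notin S$, the construction forces $A$ to be trivial in $\Br(V_{K_v})/\Br(K_v)$ (indeed the uniform vanishing $\inv_v(A(P_v))=0$ together with injectivity of $\inv_v$ and the splitting of $\Br(V_{K_v})$ by $V(K_v)\neq\emptyset$ forces this), a triviality preserved by base change, so $\inv_{v'}(A(P_{v'}))=0$ for every $P_{v'}$. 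The Brauer quotient $\Br(V_{L'})/\Br(L')\cong\ZZ/2\ZZ$, generated by the pullback of $A$, is verified exactly as for $K$ and $L$ in the proposition.

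For the first bullet, assume $T'\cap S_{L'}\neq\emptyset$ and fix a target $(P_{v'})_{v'\notin T'}$. The partial sum $\sigma=\sum_{v'\notin T'}\inv_{v'}(A(P_{v'}))$ lies in $\{0,\half\}$ since only the finitely many places of $S_{L'}\setminus T'$ contribute. Picking $v_0\in T'\cap S_{L'}$, I would choose $P_{v_0}\in V_{L'}(L'_{v_0})$ with $\inv_{v_0}(A(P_{v_0}))=-\sigma$ (using the two-value property) and arbitrary components at the remaining places of $T'$; the resulting adelic point has total invariant sum zero, hence lies in $V_{L'}(\AA_{L'})^{\Br}$. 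Applying weak approximation with Brauer--Manin obstruction and then projecting off $T'$ recovers the given target in the closure of $V_{L'}(L')$.

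For the second bullet, assume $T'\cap S_{L'}=\emptyset$. Since $S\neq\emptyset$ splits completely in $L'$, the set $S_{L'}\subset\Omega_{L'}\setminus T'$ is nonempty; pick $v_0\in S_{L'}$ and construct a target $(P_{v'})_{v'\notin T'}$ with $\inv_{v_0}(A(P_{v_0}))=\half$ and arbitrary choices elsewhere, giving partial sum $\half$. A rational point $P\in V_{L'}(L')$ approximating this target $v'$-adically would match local invariants at all $v'\notin T'$ by local constancy of the evaluation map, while $\inv_{v'}(A(P))=0$ at every $v'\in T'$ by the vanishing at $T'\cap S_{L'}=\emptyset$; the total sum would equal $\half$, contradicting global reciprocity. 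Taking $T'=\emptyset$ gives the failure of weak approximation outright. The main obstacle, and the only nontrivial piece of bookkeeping, is ensuring that the local-invariant pattern is genuinely inherited by every intermediate $L'$, which rests on the complete-splitting hypothesis on $S$ feeding through the proposition.
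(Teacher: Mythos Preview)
Your overall strategy matches the paper's exactly: set $V=V_2$, transport the local-invariant pattern of Proposition~\ref{proposition the valuation of Brauer group on local points are fixed outside S and take two value on S} to each intermediate $L'$, and then run the two reciprocity arguments (which are precisely those of Proposition~\ref{proposition the valuation of Brauer group on local points are fixed outside S and take two value on S property}). The paper's proof is terser: since $S$ splits completely in $L$ it splits completely in every $L'\subset L$, so both propositions apply verbatim with $L$ replaced by $L'$.

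There is, however, a gap in your transfer step. For $v\notin S$ you argue that the uniform vanishing $\inv_v(A(P_v))=0$, together with the split injection $\Br(K_v)\hookrightarrow\Br(V_{K_v})$, forces $A$ to lie in $\Br(K_v)$. This inference is not valid as stated: a non-constant Brauer class can in principle have constant evaluation on $V(K_v)$, and nothing you cite rules this out---the splitting only tells you $\Br(V_{K_v})\cong\Br(K_v)\oplus\Br(V_{K_v})/\Br(K_v)$, not that the evaluation pairing has trivial right kernel. The paper avoids this entirely by rerunning the local computation over $L'_{v'}$ (equivalently, by applying the proposition to the pair $L'/K$), which works because every valuation and sign condition on $a,b,c$ used in the case analysis is inherited from $K_v$ to $L'_{v'}$.

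A smaller point: in both bullets, ``arbitrary components at the remaining places'' does not give the claimed sums; at any other places of $S_{L'}$ you must choose points with local invariant $0$ (the rational point $Q_0$ works) so that the totals are $0$ and $\half$ respectively. The paper sidesteps this in the first bullet by proving the stronger statement that $V_{L'}$ already satisfies weak approximation off a single place $v_0\in S_{L'}$.
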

	
	\begin{corollary}[Corollary \ref{interesting result for Chatelet surface2}]
		For any extension of number fields $L/K,$ there exists a Ch\^atelet surface $V$ defined over $K$ such that $V(\AA_K)\neq \emptyset.$
		For any subfield $L'\subset L$ over $K,$ the Brauer group $\Br(V)/\Br(K)\cong\Br(V_{L'})/\Br(L')\cong \ZZ/2\ZZ.$ And the surface $V_{L'}$ has the following properties.
		\begin{itemize}
			\item If the degree $[L':K]$ is odd, then the surface $V_{L'}$ is a counterexample to the Hasse principle. In particular, the surface $V$ is a counterexample to the Hasse principle.
			\item If the degree $[L':K]$ is even, then the surface $V_{L'}$ satisfies weak approximation. In particular, in this case, the set $V(L')\neq \emptyset.$
		\end{itemize}
	\end{corollary}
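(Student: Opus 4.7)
The plan is to apply the third construction of the preceding Proposition to a carefully chosen single-place set $S$, and then to read off the dichotomy from the parity of $|S_{L'}|$. Concretely, I would use \v{C}ebotarev's density theorem to pick a non-complex, non-$2$-adic finite place $v_0 \in \Omega_K$ that splits completely in $L$, set $S = \{v_0\}$, and take $V := V_3$ as produced by the previous Proposition for this $S$. By construction, $V(\AA_K) \neq \emptyset$, $\Br(V)/\Br(K) \cong \Br(V_L)/\Br(L) \cong \ZZ/2\ZZ$ is generated by some $A \in \Br(V)$, and $\inv_v(A(P_v))$ equals $\tfrac12$ at $v_0$ and $0$ elsewhere, with the analogous pattern at places above $v_0$ over $L$.

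For an intermediate field $K \subset L' \subset L$, local points base-change, so $V_{L'}(\AA_{L'}) \neq \emptyset$. Since $v_0$ splits completely in $L$, and hence in $L'$, the restriction $A_{L'}$ of $A$ inherits the same local-invariant pattern: $\tfrac12$ at every place of $L'$ above $v_0$ and $0$ at every other place. In particular the sequence of local invariants of $A_{L'}$ is nonconstant, so $A_{L'}$ cannot come from $\Br(L')$ and therefore defines a nontrivial class in $\Br(V_{L'})/\Br(L')$. Sandwiching this nontrivial class through the natural restriction maps
\[
\Br(V)/\Br(K) \to \Br(V_{L'})/\Br(L') \to \Br(V_L)/\Br(L),
\]
whose outer terms are both $\ZZ/2\ZZ$ by assumption, forces $\Br(V_{L'})/\Br(L') \cong \ZZ/2\ZZ$ generated by $A_{L'}$.

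The dichotomy now follows from a one-line computation of the global Brauer--Manin sum. For any $(P_{v'}) \in V_{L'}(\AA_{L'})$,
\[
\sum_{v' \in \Omega_{L'}} \inv_{v'}(A(P_{v'})) \;=\; \frac{|S_{L'}|}{2} \;=\; \frac{[L':K]}{2} \pmod 1,
\]
using that $v_0$ splits completely in $L'$ so $|S_{L'}| = [L':K]$. When $[L':K]$ is odd the sum equals $\tfrac12$, so $V_{L'}(\AA_{L'})^{\Br} = \emptyset$; combined with $V_{L'}(\AA_{L'}) \neq \emptyset$ this makes $V_{L'}$ a counterexample to the Hasse principle whose failure is explained by the Brauer--Manin obstruction. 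The case $L' = K$ (degree $1$) recovers the claim about $V$ itself. When $[L':K]$ is even the sum vanishes on every adelic point, so $V_{L'}(\AA_{L'})^{\Br} = V_{L'}(\AA_{L'})$; invoking the Colliot-Th\'el\`ene--Sansuc--Swinnerton-Dyer theorem \cite{CTSSD87a,CTSSD87b} that Ch\^atelet surfaces satisfy weak approximation with Brauer--Manin obstruction, $V_{L'}(L')$ is dense in $V_{L'}(\AA_{L'})^{\Br}$, which here equals $V_{L'}(\AA_{L'})$. This is weak approximation for $V_{L'}$, and in particular forces $V(L') \neq \emptyset$.

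The main conceptual hurdle is the Brauer group identification for intermediate $L'$, which I handle by the sandwich argument above using the nontriviality of $A_{L'}$ detected by local invariants; everything else reduces to elementary place-counting together with two black-box inputs, namely the construction of $V_3$ and the CTSSD theorem for Ch\^atelet surfaces.
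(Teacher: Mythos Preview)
Your overall strategy matches the paper's exactly: pick via \v{C}ebotarev a single place $v_0\in\Omega_K\setminus(\infty_K^c\cup 2_K)$ splitting completely in $L$, set $S=\{v_0\}$, take $V=V_3$ from the construction, and deduce the dichotomy from the parity of $\#S_{L'}=[L':K]$, invoking the CTSSD theorem for the even case.

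There is, however, a genuine gap in your handling of the Brauer group over an intermediate field $L'$. Your sandwich argument does not force $\Br(V_{L'})/\Br(L')\cong\ZZ/2\ZZ$: a chain $\ZZ/2\ZZ\hookrightarrow G\to\ZZ/2\ZZ$ with composite an isomorphism only shows $G\cong\ZZ/2\ZZ\oplus\ker(g)$, and places no upper bound on $G$ (take $G=(\ZZ/2\ZZ)^2$ with inclusion and projection on the first factor). For Ch\^atelet surfaces $\Br(V_{L'})/\Br(L')$ can indeed jump to $(\ZZ/2\ZZ)^2$ if $P(x)$ factors further over $L'$, so this is not a formality. The gap is not merely cosmetic: in the even-degree case you conclude $V_{L'}(\AA_{L'})^{\Br}=V_{L'}(\AA_{L'})$ from the vanishing of the pairing with $A_{L'}$ alone, but a second generator of the Brauer group could still obstruct, destroying the weak approximation conclusion.

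The paper closes this gap by simply re-running the proof of the preceding Proposition with $L'$ in place of $L$. This works because not only $v_0$ but also the auxiliary places $v_1,v_2$ built into the construction split completely in $L$, hence in $L'$; so the same Eisenstein and valuation arguments show $a\notin L'^{\times 2}$, that $x^2-c$ and $bx^2-bc-1$ remain irreducible over $L'$, and that neither defines $L'(\sqrt{a})$. This yields $\Br(V_{L'})/\Br(L')\cong\ZZ/2\ZZ$ generated by $A_{L'}$ directly, and the same re-run justifies the local-invariant pattern over $L'$ that you asserted without proof. Replacing your sandwich paragraph with this observation fixes the argument.
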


	\subsection{Main results for Ch\^atelet surface bundles over curves}	We will apply our results for Ch\^atelet surfaces to construct Ch\^atelet surface bundles over curves to give negative answers to Questions \ref{Questions}.
	
	\subsubsection{A negative answer to Question \ref{question on WA1}} For any quadratic extension of number fields $L/K,$ and assuming the Stoll's conjecture, a Ch\^atelet surface bundle over a curve was constructed by Liang\cite{Li18} to give a negative answer to Question \ref{question on WA1}. Also an unconditional example with explicit equations was given for $L=\QQ(\sqrt{5})$ and $K=\QQ$ in loc. cit. His method only works for quadratic extensions. In this paper, we generalize it to any nontrivial extension of number fields.
	
	For any nontrivial extension of number fields $L/K,$ assuming the Stoll's conjecture, we have the following theorem to give a negative answer to Question \ref{question on WA1}.
	\begin{thm}[Theorem \ref{theorem main result: non-invariance of weak approximation with BMO}]
		For any nontrivial extension of number fields $L/K,$ and any finite subset $T\subset \Omega_L,$ assuming the Stoll's conjecture, there exists a Ch\^atelet surface bundle over a curve: $X\to C$ defined over $K$ such that
		\begin{itemize}
			\item $X$ has a $K$-rational point, and satisfies weak approximation with Brauer-Manin obstruction 
			off $\infty_K,$
			\item $X_L$ does not satisfy weak approximation with Brauer-Manin obstruction 
			off $T.$
		\end{itemize}
	\end{thm}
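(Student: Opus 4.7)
The plan is to construct a Ch\^atelet surface bundle $\pi:X\to C$ over a carefully chosen base curve, following the Poonen--Liang framework but generalising the quadratic place-choice to arbitrary $L/K$. By \v{C}ebotarev (Theorem~\ref{theorem Chebotarev density theorem}), I pick a non-archimedean, non-$2$-adic place $v_0\in \Omega_K$ that splits completely in $L$ and all of whose places in $L$ lie outside $T$; setting $S:=\{v_0\}$, we then have $S_L\cap T=\emptyset$. Applying Proposition~\ref{proposition the valuation of Brauer group on local points are fixed outside S and take two value on S} to $(L/K,S)$ yields a Ch\^atelet surface $V/K$ with $V(K)\neq\emptyset$ and $\Br(V)/\Br(K)=\langle A\rangle\cong\ZZ/2\ZZ$, where $\inv_v(A(\cdot))$ vanishes on $V(K_v)$ for $v\neq v_0$ and takes both values $\{0,1/2\}$ on $V(K_{v_0})$; the analogous statement holds over $L$ with $S_L$ in place of $S$.

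Next, I build a smooth, projective, geometrically connected threefold $X/K$ equipped with a proper flat morphism $\pi:X\to C$ to a smooth projective geometrically connected curve $C/K$, such that the generic fibre is a Ch\^atelet surface, there is a point $P_0\in C(K)$ with $X_{P_0}\simeq V$, the Brauer group satisfies $\Br(X)/\Br(K)=\langle \pi^{*}A\rangle$, and $C$ admits an $L$-rational point $Q\in C(L)$ whose fibre $X_Q$ is an $L$-Ch\^atelet surface that is locally soluble everywhere but ``bad'' at some chosen $v_1'\in T$ (in the sense that some open adelic condition at $v_1'$ is met by adelic points of $X_Q$ but not by any $L$-rational point). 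This is achieved by the standard one-parameter pencil realisation of Ch\^atelet surfaces over $\PP^{1}$, into which one inserts $V$ as the fibre over $P_0$ and a surface supplied by Proposition~\ref{proposition no local point for S} (suitably adapted to $(L/K,\{v_1'\})$) as a fibre over~$Q$.

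To verify WA with BMO for $X$ off $\infty_K$: given any $(P_v)_v \in X(\AA_K)^{\Br}$ and any finite $T_0\subset \Omega_K\setminus \infty_K$, the projection $\pi$ produces an adelic point of $C$ orthogonal to $\Br(C)$ thanks to $\Br(X)/\Br(K)=\langle \pi^{*}A\rangle$. I apply Stoll's conjecture for $C$ to approximate this adelic point by an $R\in C(K)$ at every place of $T_0$. The fibre $X_R$ is a smooth Ch\^atelet surface, hence satisfies WA with BMO by \cite{CTSSD87a}, and approximating $(P_v)_v$ inside $X_R(\AA_K)$ concludes the $K$-side.

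To see the failure of WA with BMO for $X_L$ off $T$, I produce an adelic point $(P_{v'})_{v'}\in X_L(\AA_L)^{\Br}$ whose image under $\pi$ is concentrated near $Q\in C(L)$: local solubility of $X_Q$ supplies components $P_{v'}\in X_Q(L_{v'})$, while the freedom at places above $v_0$ (all in $S_L\subset\Omega_L\setminus T$) to switch among the two values of $\inv_{v'}(\pi^{*}A)$ forces the Brauer--Manin sum to vanish. Yet any $L$-rational point of $X_L$ approximating $(P_{v'})_{v'}$ off $T$ would have to meet the prescribed local behaviour at $v_1'\in T$ inside a fibre close to $X_Q$, which is ruled out by construction. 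The main obstacle is precisely this final step: engineering the pencil so as to host a globally adelic but non-$T$-approximable fibre over an $L$-point of $C$, while preserving fibre-by-fibre good behaviour over $K$. The asymmetry is enabled by the split place $v_0$ chosen in Step~1 and by the $\{0,1/2\}$-valuedness at $S_L$ granted by Proposition~\ref{proposition the valuation of Brauer group on local points are fixed outside S and take two value on S}, which is what allows the BM pairing to be killed over $L$ without restricting the local data in $T$.
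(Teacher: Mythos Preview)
Your proposal rests on a false description of $\Br(X)$, and this error propagates through both halves of the argument. In Poonen's construction (Proposition~\ref{Poonen's main proposition}) one has $\alpha^*:\Br(B)\stackrel{\sim}{\to}\Br(X)$ with $B=C\times\PP^1$, so $\Br(X)\cong\Br(C)$; the class $A\in\Br(V)$ on a fibre does \emph{not} extend to $X$, and your claim $\Br(X)/\Br(K)=\langle\pi^*A\rangle$ is not even well-typed ($\pi^*$ has source $\Br(C)$, not $\Br(V)$). This breaks your $K$-side verification: you place the two-value surface $V$ over $P_0\in C(K)$ and then say that since $X_R$ is Ch\^atelet it satisfies WA with BMO, ``and approximating $(P_v)_v$ inside $X_R(\AA_K)$ concludes''. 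But WA with BMO for $X_R$ only approximates points of $X_R(\AA_K)^{\Br}$, and since the obstruction class $A\in\Br(X_R)$ does not come from $\Br(X)$, the hypothesis $(P_v)\in X(\AA_K)^{\Br}$ tells you nothing about $\sum_v\inv_v A(P_v)$. The paper avoids this by placing over every point of $C(K)$ the surface $V_\infty$ with $P_\infty(x)=(1-x^2)(x^2-a)$, which by Remark~\ref{remark birational to Hasse-Minkowski theorem} satisfies \emph{genuine} weak approximation; then Lemma~\ref{lemma fiber criterion for wabm} applies directly.

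On the $L$-side you have the roles reversed as well. You put over $Q\in C(L)$ a surface from Proposition~\ref{proposition no local point for S} lacking a local point at some $v_1'\in T$; but since $v_1'$ is a place you are discarding, local insolubility there yields no obstruction to approximation \emph{off} $T$, and your proposed adjustment ``at places above $v_0$'' again manipulates the nonexistent class $\pi^*A$. The correct arrangement is the opposite of yours: place the two-value surface $V_0$ of Proposition~\ref{proposition the valuation of Brauer group on local points are fixed outside S and take two value on S} over $C(L)\setminus C(K)$, so that $V_{0L}$ fails WA off $T$ by Proposition~\ref{proposition the valuation of Brauer group on local points are fixed outside S and take two value on S property} (this is where $S_L\cap T=\emptyset$ is used), and then invoke Lemma~\ref{lemma fiber criterion for not wabm} together with the isomorphism $\beta_L^*:\Br(C_L)\to\Br(X_L)$. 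Finally, you need a curve with $C(K),C(L)$ finite, $C(K)\subsetneq C(L)$, and Conjecture~\ref{conjecture Stoll} holding for $C$; this is what Lemma~\ref{lemma stoll conjecture} and Lemma~\ref{lemma choose base change morphism} supply, whereas your ``standard pencil over $\PP^1$'' does not.
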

	
	For $K=\QQ$ and $L=\QQ(\sqrt{3}),$ based on the method given in Theorem \ref{theorem main result: non-invariance of weak approximation with BMO}, we give an explicit unconditional example  in Subsection \ref{subsection main example1}. 
	
	%
	%
	%
	%
	%
	%
	%
	%
	%
	\subsubsection{Negative answers to Question \ref{question on HP}} To the best knowledge of the author, Question \ref{question on HP} has not yet been seriously discussed in the literature.
	
	For any number field $K,$ and any nontrivial field extension $L$ of odd degree over $K,$ assuming the Stoll's conjecture, we have the following theorem to give a negative answer to Question \ref{question on HP}.
	\begin{thm}[Theorem \ref{theorem main result: non-invariance of the Hasse principle with BMO for odd degree}]
		
		For any number field $K,$ and any nontrivial field extension $L$ of odd degree over $K,$ assuming the Stoll's conjecture, there exists a Ch\^atelet surface bundle over a curve: $X\to C$ defined over $K$ such that
		\begin{itemize} 
			\item $X$ is a counterexample to the Hasse principle, and its failure of the Hasse principle is explained by the Brauer-Manin obstruction,
			\item $X_L$ is a counterexample to the Hasse principle, but its failure of the Hasse principle cannot be explained by the Brauer-Manin obstruction.
		\end{itemize}
	\end{thm}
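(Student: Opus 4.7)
The plan is to adapt the construction of Theorem \ref{theorem main result: non-invariance of weak approximation with BMO} to the Hasse-principle-with-obstruction setting, exploiting the odd parity of $[L:K]$. First, by \v{C}ebotarev's density theorem I would fix a finite subset $S\subset \Omega_K\setminus\{\text{complex and }2\text{-adic places}\}$ of suitable cardinality splitting completely in $L$. Using Proposition \ref{proposition the valuation of Brauer group on local points are fixed outside S and take two value on S}, I would construct a Ch\^atelet surface $V/K$ whose Brauer group $\Br(V)/\Br(K)\cong\ZZ/2\ZZ$ is generated by a class $A$ with local invariants vanishing outside $S$ and attaining both values $0$ and $\half$ on $S$, and similarly for $V_L$ with respect to $S_L$.

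Next, invoking Stoll's conjecture (Conjecture \ref{conjecture Stoll}), I would choose a smooth, projective, geometrically connected curve $C/K$ which is a counterexample to the Hasse principle with failure explained by the Brauer-Manin obstruction, and such that $C_L$ acquires $L$-rational points under the odd-degree base change. I would then assemble a Ch\^atelet surface bundle $\pi\colon X\to C$ whose generic fiber is a twist of $V$ parameterized by $C$, with fibers above $C_L(L)$ designed to be Ch\^atelet surfaces without $L$-rational points (using the odd-degree conclusion of Corollary \ref{interesting result for Chatelet surface2} that $V_L$ itself is a counterexample to HP). The verification then splits into three parts: (a)~$X(\AA_K)\neq\emptyset$ while the pullback to $X$ of the Brauer class from the generic fiber has local invariants summing to $\half$, obtained by combining the BMO on $C$ with the $V$-control of invariants over $S$, showing $X(\AA_K)^{\Br}=\emptyset$; (b)~$X_L(\AA_L)^{\Br}\neq\emptyset$, produced by lifting adelic points of $C_L$ and exploiting that Brauer classes pulled back from $C_L$ pair trivially on any lift of an $L$-rational point of $C_L$ by global reciprocity, together with the extra local freedom over $S_L$ to zero the sum for the generator of $\Br(X_L)/\Br(L)$; and (c)~$X_L(L)=\emptyset$ because any $L$-point of $X_L$ projects to $C_L(L)$, where fibers have no $L$-points by construction.

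The main obstacle is harmonizing (b) and (c). In (b), making the generating Brauer class of $\Br(X_L)/\Br(L)$ pair to zero on some adelic point uses the odd parity crucially: although $|S_L|=[L:K]\cdot|S|$ preserves the parity of $|S|$, so the same sum-to-$\half$ mechanism that obstructs $K$-points could threaten to obstruct $L$-points, this is defeated by exploiting the splitting of each $v\in S$ into $[L:K]$ independent places over $L$, at which we can distribute local invariants freely. In (c), the fibers over $C_L(L)$ must remain genuinely $L$-obstructed, requiring that the $V_L$-type construction be stable under twisting by $C$. Coordinating the freedom in (b) with the rigidity in (c), while keeping fibers above all other adelic points rich enough to preserve $X_L(\AA_L)^{\Br}\neq\emptyset$, is the delicate combinatorial heart of the argument.
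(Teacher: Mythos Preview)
Your proposal misidentifies the source of the Brauer obstruction on $X$. By Poonen's Proposition \ref{Poonen's main proposition}, the map $\beta^*\colon \Br(C)\to\Br(X)$ is an \emph{isomorphism}; the class $A\in\Br(V)$ from the Ch\^atelet fibre does \emph{not} extend to a class on the total space. Consequently there is no ``generator of $\Br(X_L)/\Br(L)$'' coming from $A$ to balance, and your whole discussion of distributing invariants over $S_L$ is addressing a nonexistent difficulty. You also conflate the two kinds of Ch\^atelet surface: the surface of Proposition \ref{proposition the valuation of Brauer group on local points are fixed outside S and take two value on S} has a $K$-rational point and cannot be a counterexample to the Hasse principle, whereas Corollary \ref{interesting result for Chatelet surface2} is about the surface of Proposition \ref{proposition the valuation of Brauer group on local points are fixed and on S nontrivial}.

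The paper's construction is structurally different. It takes a curve $C$ with $C(K)$ and $C(L)$ finite, nonempty, and $C(K)\subsetneq C(L)$ (type I), \emph{not} a curve that is itself a counterexample to the Hasse principle. It uses \emph{two} Ch\^atelet surfaces with a common $a$: a surface $V_\infty$ (Proposition \ref{proposition no local point for S}) with $V_\infty(K_{v_2})=\emptyset$ at a single split place $v_2$, placed over the fibres above $C(K)$; and a surface $V_0$ (Proposition \ref{proposition the valuation of Brauer group on local points are fixed and on S nontrivial}) whose generating Brauer class has invariant $\half$ at exactly one split place $v_1$, placed over the fibres above $C(L)\setminus C(K)$. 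Then $X(\AA_K)^{\Br}=\emptyset$ because Stoll forces $pr^{\infty_K}(X(\AA_K)^{\Br})$ into $\bigsqcup_{P\in C(K)}\beta^{-1}(P)(\AA_K^{\infty_K})\cong V_\infty(\AA_K^{\infty_K})\times C(K)$, which is empty since $V_\infty(K_{v_2})=\emptyset$. Next $X_L(\AA_L)^{\Br}\supset V_0(\AA_L)\times(C(L)\setminus C(K))\neq\emptyset$, simply because $\Br(X_L)=\beta_L^*\Br(C_L)$ pairs trivially on any adelic point lying over a single $L$-rational point of $C_L$. Finally $X(L)=\emptyset$ because the $L$-rational fibres are either $V_{\infty L}$ (no $L_{v'}$-point for $v'\mid v_2$) or $V_{0L}$, and the odd degree enters only here: $\sharp\{v'\mid v_1\}=[L:K]$ is odd, so the sum of invariants of $A\in\Br(V_{0L})$ on any hypothetical $L$-point would be $[L:K]/2\neq 0$ in $\QQ/\ZZ$, forcing $V_0(L)=\emptyset$ by global reciprocity (Remark \ref{remark the valuation of Brauer group on local points are fixed and on S nontrivial}). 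Thus the odd parity acts on the \emph{fibre} $V_{0L}$, not on the Brauer set of $X_L$.
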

	
	Let $\zeta_7$ be a primitive $7$-th root of unity.
	For $K=\QQ$ and $L=\QQ(\zeta_7+\zeta_7^{-1}),$ based on the method given in Theorem \ref{theorem main result: non-invariance of the Hasse principle with BMO for odd degree}, we give an explicit unconditional example in Subsection \ref{subsection main example2}. The $3$-fold $X$ is a smooth compactification of the following $3$-dimensional affine subvariety given by equations 
	\begin{equation*}
		\begin{cases}
			y^2-377z^2=14(x^4-89726){y'}^2+(x^2-878755181)(5x^2-4393775906)\\ 
			{y'}^2={x'}^3-343x'-2401
		\end{cases}
	\end{equation*}
	in $(x,y,z,x',y')\in \AA^5.$

	For any number field $K$ having a real place, and any nontrivial field extension $L/K$ having a real place, assuming the Stoll's conjecture, we have the following theorem to give a negative answer to Question \ref{question on HP}.
	\begin{thm}[Theorem \ref{theorem main result: non-invariance of the Hasse principle with BMO for exist real place}]
		For any number field $K$ having a real place, and any nontrivial field extension $L/K$ having a real place, assuming the Stoll's conjecture, there exists a Ch\^atelet surface bundle over a curve: $X\to C$ defined over $K$ such that
		\begin{itemize}
			\item $X$ is a counterexample to the Hasse principle, and its failure of the Hasse principle is explained by the Brauer-Manin obstruction,
			\item $X_L$ is a counterexample to the Hasse principle, but its failure of the Hasse principle cannot be explained by the Brauer-Manin obstruction.
		\end{itemize}
	\end{thm}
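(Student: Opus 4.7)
The plan is to mirror the proof of Theorem \ref{theorem main result: non-invariance of the Hasse principle with BMO for odd degree}, with the real-place hypothesis on both $K$ and $L$ taking the role of the odd-degree hypothesis: it is used to secure a parity mismatch between $|S|$ and $|S_L|$ in the Ch\^atelet surface construction of Proposition \ref{proposition the valuation of Brauer group on local points are fixed and on S nontrivial}, so that the vertical \BMo survives over $K$ but disappears over $L$. First, fix a real place $w_0$ of $L$ and set $v_0=w_0|_K$, a real place of $K$ (which exists by hypothesis). If $[L:K]$ is odd, the conclusion already follows from Theorem \ref{theorem main result: non-invariance of the Hasse principle with BMO for odd degree}, so assume $[L:K]$ is even; using \v{C}ebotarev's density theorem (Theorem \ref{theorem Chebotarev density theorem}) I would choose a non-archimedean, non-$2$-adic place of $K$ splitting completely in $L$ and let $S$ be its singleton, so that $|S|=1$ is odd while $|S_L|=[L:K]$ is even (the real places $v_0,w_0$ enter the explicit construction through sign control).

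Applying Proposition \ref{proposition the valuation of Brauer group on local points are fixed and on S nontrivial} to this $S$ yields a Ch\^atelet surface $V$ over $K$ with $V(\AA_K)\neq\emptyset$ and a Brauer class $A$ generating $\Br(V)/\Br(K)\cong\ZZ/2\ZZ$ whose local invariant at $P_v\in V(K_v)$ (resp.\ $P_{v'}\in V_L(L_{v'})$) equals $\half$ for $v\in S$ (resp.\ $v'\in S_L$) and $0$ otherwise. Hence the global invariant sum equals $|S|/2=\half\not\equiv 0\pmod 1$ over $K$, so $V$ is an HP counterexample whose failure is explained by BM; whereas it equals $|S_L|/2=[L:K]/2\equiv 0\pmod 1$ over $L$, so $V_L$ has no \BMo and in particular $V_L(L)\neq\emptyset$ by Colliot-Th\'el\`ene, Sansuc, and Swinnerton-Dyer.

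Following the bundle construction of the preceding theorem, I would then assemble a Ch\^atelet surface bundle $\pi\colon X\to C$ over a suitable curve $C$ defined over $K$ whose generic fiber is a twist of $V$. Using Stoll's conjecture to control the arithmetic of $C$ and $C_L$, the curve $C$ is chosen so that all $L$-rational points of $C_L$ lie over fibers having no $L$-rational point (thus forcing $X_L(L)=\emptyset$), while some adelic points of $C_L$ outside $C_L(L)$ lift to adelic points of $X_L$ on which the vertical invariant sum vanishes (thus giving $X_L(\AA_L)^{\Br}\neq\emptyset$). Over $K$, the global vertical invariant is identically $\half$, yielding $X(\AA_K)^{\Br}=\emptyset$ and $X(K)=\emptyset$ while $X(\AA_K)\neq\emptyset$, so the HP failure of $X$ is BM-explained.

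The main obstacle is this last step: one must arrange $\pi\colon X\to C$ so that the $L$-rational points of $C_L$ hit exactly the BM-obstructed fibers while some adelic points of $C_L$ lift to $X_L$ with vanishing invariant sum. This balancing act, made possible by the parity flip $|S|\to|S_L|$ coming from the even degree of $L/K$ together with the sign data supplied by the real places, parallels the analogous step in the odd-degree theorem and is the technical heart of the proof.
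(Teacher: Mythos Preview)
Your approach has a genuine gap at the heart of the construction. You propose to use the surface $V=V_3$ of Proposition \ref{proposition the valuation of Brauer group on local points are fixed and on S nontrivial} with $|S|=1$ odd and $|S_L|=[L:K]$ even, concluding correctly that $V(K)=\emptyset$ while $V_L(L)\neq\emptyset$. But then you cannot also arrange that ``all $L$-rational points of $C_L$ lie over fibers having no $L$-rational point'': in the bundle construction following Theorem \ref{theorem main result: non-invariance of the Hasse principle with BMO for odd degree}, the fibers over $C(L)\setminus C(K)$ are copies of $V_{0L}$, and if this is your $V_L$ then those fibers \emph{do} have $L$-points, so $X_L(L)\neq\emptyset$. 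The parity flip works in the wrong direction for you. There is a second misconception: you speak of a ``global vertical invariant identically $\half$'' forcing $X(\AA_K)^{\Br}=\emptyset$, but by Proposition \ref{Poonen's main proposition} one has $\Br(X)\cong\Br(C)$ via $\beta^*$, so the fiberwise class $A\in\Br(V)$ does not extend to $X$; in the paper the emptiness of $X(\AA_K)^{\Br}$ comes instead from Stoll's conjecture on $C$ together with the fact that the $K$-rational fibers $V_\infty$ lack a local point.

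The paper's proof follows a genuinely different route that you have not identified. Both $V_0$ and $V_\infty$ are taken to be surfaces of the \emph{first} kind (Proposition \ref{proposition no local point for S}), not the third: $V_\infty$ has no $K_{v_1}$-point for a finite place $v_1$ split in $L$, while $V_0$ has no $K_{v_0}$-point for the \emph{real} place $v_0$ under $w_0$. Then $V_0(L_{w_0})=V_0(K_{v_0})=\emptyset$ forces $V_0(L)=\emptyset$ for a purely local reason, giving $X_L(L)=\emptyset$. The delicate point is $X_L(\AA_L)^{\Br}\neq\emptyset$, since now $V_0(\AA_L)=\emptyset$ as well. Here the real-place hypothesis is used in an essential topological way: the curve $C$ is chosen so that the triple $(C,K,L)$ is of type~II (Definition \ref{definition curve of type}), meaning $C(L_{w_0})\cong C(\RR)$ is \emph{connected}. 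Since every class in $\Br(X_L)$ comes from $\Br(C_L)$, its evaluation is constant on $C(L_{v'})$ for each $v'$ above $v_0$; this lets one build an adelic point of $X_L$ by placing $\beta^{-1}(Q)$-points (for $Q\in C(L)\setminus C(K)$) away from the real places and $\beta^{-1}(P)$-points (for $P\in C(K)$) at the real places, with the Brauer pairing automatically vanishing. This connectedness argument, not a parity count, is what makes the real-place case work.
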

	
	For $K=\QQ$ and $L=\QQ(\sqrt{3}),$ based on the method given in Theorem \ref{theorem main result: non-invariance of the Hasse principle with BMO for exist real place}, we give an explicit unconditional example in Subsection \ref{subsection main example3}.
	The $3$-fold $X$ is a smooth compactification of the following $3$-dimensional affine subvariety given by equations 
	\begin{equation*}
		\begin{cases}
			y^2+23z^2=5(x^4+805)(x'-4)^2-5(x^4+115)\\
			{y'}^2={x'}^3-16
		\end{cases}
	\end{equation*}
	in $(x,y,z,x',y')\in \AA^5.$

	\begin{exceptions}[Subsection \ref{subsection main example4}]	
		For Question \ref{question on HP}, besides cases of Theorem \ref{theorem main result: non-invariance of the Hasse principle with BMO for odd degree} and Theorem \ref{theorem main result: non-invariance of the Hasse principle with BMO for exist real place}, there are some exceptions. When the degree $[L:K]$ is even and $L$ has no real place, we can give some unconditional examples, case by case, to give negative answers to Question \ref{question on HP}, although we do not have a uniform way to construct them. In Subsection \ref{subsection main example4},  we give an example to explain how it works for
		the case that $K=\QQ$ and $L=\QQ(i).$ The $3$-fold $X,$ is a smooth compactification of the following $3$-dimensional affine subvariety given by equations 
		\begin{equation*}
			\begin{cases}
				y^2+15z^2=(x^4-10x^2+15)({y'}^2+32)/8-(5x^4-39x^2+75)y'/2\\ 
				{y'}^2={x'}^3-16
			\end{cases}
		\end{equation*}
		in $(x,y,z,x',y')\in \AA^5.$
	\end{exceptions}

	\subsubsection{Main ideas behind our constructions in the proof of theorems}
	Given a nontrivial extension of number fields $L/K,$
	we start with a curve $C$ such that $C(K)$ and $C(L)$ are finite, nonempty and $C(K)\neq C(L).$ Using our results for Ch\^atelet surfaces and a construction method from Poonen \cite{Po10}, we construct  a Ch\^atelet surface bundle over this curve: $\beta \colon X\to C$ such that the fiber of each $C(K)$ point is isomorphic to $V_\infty,$ and that the fiber of each $C(L)\backslash C(K)$ point is isomorphic to $V_0.$ Assuming the Stoll's conjecture, by a main proposition from \cite[Proposition 5.4]{Po10} and the functoriality of Brauer-Manin pairing, the Brauer-Manin sets of $X,$ roughly speaking, is the union of adelic points sets of rational fibers. Using some fibration arguments, the arithmetic properties of $V_\infty$ and $V_0$ will determined the arithmetic properties of $X.$ For Theorem \ref{theorem main result: non-invariance of weak approximation with BMO} and \ref{theorem main result: non-invariance of the Hasse principle with BMO for odd degree}, these ideas will be enough. For Theorems \ref{theorem main result: non-invariance of the Hasse principle with BMO for exist real place}, we choose the curve $C$ with an additional connected condition at one real place, and make full use of this real place information.

	\section{Notation and preliminaries}
	Given a number field $K,$ let $\Ocal_K$ be the ring of its integers, and let $\Omega_K$ be the set of all its nontrivial places. Let $\infty_K\subset \Omega_K$ be the subset of all archimedean places, and let $2_K\subset \Omega_K$ be the subset of all $2$-adic places. Let $\infty_K^r\subset \infty_K$ be the subset of all real places, and let $\infty_K^c\subset \infty_K$ be the subset of all complex places. Let $\Omega_K^f=\Omega_K\backslash \infty_K$ be the set of all finite places of $K.$ Let $K_v$ be the completion of $K$ at  $v\in \Omega_K.$ 
	For $v\in \infty_K,$ let $\tau_v\colon K\hookrightarrow K_v$ be the embedding of $K$ into its completion. For $v\in \Omega_K^f,$ let $\Ocal_{K_v}$ be its valuation ring, and let $\FF_v$ be its residue field. 
	Let $S\subset \Omega_K$ be a finite subset, and let $\Ocal_S=\bigcap_{v\in \Omega^f_K\backslash S}(K\cap \Ocal_{K_v})$ be the ring of $S$-integers. Let $\AA_K, \AA_K^S$ be the ring of ad\`eles and ad\`eles without $S$ components of $K.$
	A strong approximation theorem \cite[Chapter II \S 15]{CF67} states that $K$ is dense in $\AA_K^S$ for any nonempty $S.$  
	In this paper, we only use the following special case:
	\begin{lemma}\label{lemma strong approximation for A^1}
		Let $K$ a number field. The set $K$ is dense in $\AA_K^{2_K}.$ 
	\end{lemma}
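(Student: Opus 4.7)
The plan is simply to recognize this lemma as a direct specialization of the classical strong approximation theorem quoted immediately above from \cite[Chapter II \S 15]{CF67}, which asserts that the diagonal image of $K$ is dense in the restricted product $\AA_K^S$ for every nonempty finite subset $S \subset \Omega_K$. So the only thing I would need to verify in order to apply that theorem with $S = 2_K$ is that $2_K$ is itself nonempty.

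To establish this, I would note that for any number field $K$, the rational prime $2 \in \ZZ$ is not a unit in $\Ocal_K$; the extension ideal $2\Ocal_K$ is a proper ideal, and by unique factorization of ideals in the Dedekind domain $\Ocal_K$ it admits at least one prime ideal divisor $\pfr \subset \Ocal_K$. Any such $\pfr$ lies above the rational $2$-adic valuation and therefore defines an element of $2_K$, so $2_K$ is a nonempty (finite) subset of $\Omega_K^f$.

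With $2_K \neq \emptyset$ in hand, applying the strong approximation theorem of \cite[Chapter II \S 15]{CF67} with $S = 2_K$ gives density of $K$ in $\AA_K^{2_K}$, which is exactly the statement of the lemma. I do not expect any genuine obstacle here: the lemma is recorded merely as a named reference for later use, where one repeatedly needs to choose elements of $K$ that simultaneously approximate prescribed local data at all places outside $2_K$ (e.g.\ in conjunction with \v{C}ebotarev density in the constructions of the three kinds of Ch\^atelet surfaces described in Subsection~\ref{introduction Main results for Ch\^atelet surfaces}).
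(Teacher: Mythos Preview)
Your proposal is correct and matches the paper's own treatment: the paper does not give a separate proof of this lemma but simply records it as the special case $S = 2_K$ of the strong approximation theorem from \cite[Chapter II \S 15]{CF67} quoted immediately before. Your only additional observation, that $2_K \neq \emptyset$, is the one point needed to invoke that theorem and is entirely routine.
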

	
	In this paper, we always assume that a field $L$ is a finite extension of $K.$ Let $S_L\subset \Omega_L$ be the subset of all places above $S.$
	
	It is not difficult to generalize \cite[Theorem 13.4]{Ne99} to the following version of \v{C}ebotarev's density theorem.
	\begin{theorem}[\v{C}ebotarev]\label{theorem Chebotarev density theorem}
		Let $L/K$ be an extension of number fields.
		Then the set of places of $K$ splitting completely in $L,$ has positive density. 
	\end{theorem}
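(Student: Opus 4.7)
The plan is to reduce the statement to the Galois case, which is exactly the content of \cite[Theorem 13.4]{Ne99}.

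First, I would introduce the Galois closure. Let $M$ denote the Galois closure of $L$ over $K$ inside a fixed algebraic closure $\bar K$, and let $G = \Gal(M/K)$. Since $L/K$ is finite separable, $M/K$ is a finite Galois extension, and $M$ is the compositum of the $K$-conjugates $\sigma(L)$ for $\sigma\in G$.

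Next, I would prove the key equivalence: for every $v\in\Omega_K$ unramified in $M$ (all but finitely many), $v$ splits completely in $L$ if and only if $v$ splits completely in $M$. The direction ($\Leftarrow$) is immediate from $L\subset M$. For ($\Rightarrow$), note that splitting completely is a property of the $K$-algebra $L\otimes_K K_v$, so it is invariant under the $K$-isomorphism $L\cong \sigma(L)$; hence if $v$ splits completely in $L$, it splits completely in each conjugate $\sigma(L)$. Then I would invoke the standard fact that a place splits completely in a compositum of finite extensions iff it splits completely in each factor (equivalently, the decomposition group at any prime of $M$ above $v$ is trivial iff its image in $\Gal(\sigma(L)^{\mathrm{gal}}/K)$ is trivial for each $\sigma$). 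Since $M=\prod_{\sigma\in G}\sigma(L)$, this shows $v$ splits completely in $M$.

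Having established this, I would apply the classical \v{C}ebotarev density theorem \cite[Theorem 13.4]{Ne99} to the Galois extension $M/K$: the set of places of $K$ unramified in $M$ whose Frobenius conjugacy class equals $\{1\}\subset G$ has density $1/[M:K]>0$, and these are precisely the places splitting completely in $M$. Combined with the previous equivalence, the set of places of $K$ splitting completely in $L$ differs from this set by at most the finite set of places ramified in $M$, hence also has density $1/[M:K]>0$, which proves the theorem.

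The main (and essentially only) obstacle is the reduction step: verifying cleanly that $v$ splits completely in $L$ forces $v$ to split completely in the Galois closure $M$. This is a short but conceptual point; once it is in place, the positive density assertion is an immediate consequence of the Galois version of \v{C}ebotarev.
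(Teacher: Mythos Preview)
Your proposal is correct and follows exactly the route the paper indicates: the paper does not give a proof, merely remarking that the statement is a straightforward generalization of the Galois case \cite[Theorem 13.4]{Ne99}, and your reduction via the Galois closure is precisely that generalization. One cosmetic point: you wrote $M=\prod_{\sigma\in G}\sigma(L)$ where you mean the compositum rather than a product, but the argument is otherwise clean.
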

	
	%

	\subsection{Hilbert symbol}

	We use Hilbert symbol $(a,b)_v\in \{\pm 1\},$ for $a,b\in K_v^\times$ and $v\in \Omega_K.$ By definition, $(a,b)_v=1$ if and only if $x_0^2-ax_1^2-bx_2^2=0$ has a $K_v$-solution in $\PP^2$ with homogeneous coordinates $(x_0:x_1:x_2),$ which equivalently means that the curve defined over $K_v$ by the equation $x_0^2-ax_1^2-bx_2^2=0$ in $\PP^2,$ is isomorphic to $\PP^1.$ The Hilbert symbol gives a symmetric bilinear form on $K_v^\times/K_v^{\times 2}$ with value in $\ZZ/2\ZZ,$ cf. \cite[Chapter XIV, Proposition 7]{Se79}. And this bilinear form is nondegenerate, cf. \cite[Chapter XIV, Corollary 7]{Se79}. 
	
	\subsection{Preparation lemmas} We state the following lemmas for later use.

	\begin{lemma}\label{lemma hilbert symbal lifting for odd prime}
		Let $K$ be a number field, and let $v$ be an odd place of $K.$ Let $a, b\in K_v^\times$ such that $v(a),v(b)$ are even. Then $(a,b)_v=1.$
	\end{lemma}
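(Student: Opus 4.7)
The plan is to reduce to the unit case by bilinearity of the Hilbert symbol, and then show directly that $(u_1,u_2)_v=1$ whenever $u_1,u_2$ are units at an odd place.

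Since the Hilbert symbol induces a symmetric bilinear form on $K_v^\times/K_v^{\times 2}$ (as recalled in the paper just above), and since $v(a)$ and $v(b)$ are even, I can write $a=\pi_v^{2m}u_1$ and $b=\pi_v^{2n}u_2$ for a uniformizer $\pi_v$ and units $u_1,u_2\in\Ocal_{K_v}^\times$. The factors $\pi_v^{2m}$ and $\pi_v^{2n}$ are squares in $K_v^\times$, so bilinearity gives $(a,b)_v=(u_1,u_2)_v$. The problem is thereby reduced to showing that the conic
\[
C\colon x_0^2-u_1 x_1^2-u_2 x_2^2=0\quad\text{in }\PP^2_{K_v}
\]
has a $K_v$-point when $u_1,u_2$ are units and $v$ has odd residue characteristic.

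Next I would reduce modulo $\pi_v$. Since $\bar{u}_1,\bar{u}_2\in\FF_v^\times$ and $\mathrm{char}(\FF_v)$ is odd, the reduction $\bar{C}\colon x_0^2-\bar{u}_1 x_1^2-\bar{u}_2 x_2^2=0$ is a smooth conic over $\FF_v$. A standard counting argument (the sets $\{\bar{u}_1 t^2:t\in\FF_v\}$ and $\{s^2-\bar{u}_2:s\in\FF_v\}$ each have cardinality $(|\FF_v|+1)/2$, so they must meet) produces a nontrivial $\FF_v$-point $(\bar{x}_0:\bar{x}_1:\bar{x}_2)$ of $\bar{C}$; because the conic is smooth at that point, at least one partial derivative of $x_0^2-u_1 x_1^2-u_2 x_2^2$ is a unit there.

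Hensel's lemma then lifts this $\FF_v$-point to a $K_v$-point of $C$. By the definition of the Hilbert symbol recalled in the paper, this gives $(u_1,u_2)_v=1$, hence $(a,b)_v=1$, as required.

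There is no real obstacle here: the only step that requires any care is verifying that the smooth $\FF_v$-point satisfies the hypotheses of Hensel's lemma, which it does automatically since odd residue characteristic makes the conic smooth with nonvanishing Jacobian at every $\FF_v$-point.
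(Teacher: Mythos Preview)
Your proof is correct and follows essentially the same route as the paper: reduce to units by multiplying off even powers of a uniformizer, find an $\FF_v$-point on the reduced conic, and lift via Hensel's lemma using that $v$ is odd. The only cosmetic difference is that the paper invokes the Chevalley--Warning theorem to produce the $\FF_v$-point, whereas you use the pigeonhole/counting argument on the two sets of size $(|\FF_v|+1)/2$; these are interchangeable here.
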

	
	\begin{proof}
		Choose a prime element $\pi_v\in K_v.$ Let $a_1=a\pi_v^{-v(a)}$ and $b_1=b\pi_v^{-v(b)}.$ Since the valuations $v(a)$ and $v(b)$ are even, the elements $\pi_v^{-v(a)}$ and $\pi_v^{-v(b)}$ are in $K_v^{\times 2}.$ So $(a,b)_v=(a_1,b_1)_v$ and $a_1,b_1\in \Ocal_{K_v}^\times.$
		By Chevalley-Warning theorem (cf. \cite[Chapter I \S 2, Corollary 2]{Se73}), the equation $x_0^2-\bar{a}_1x_1^2-\bar{b}_1x_2^2=0$ has a nontrivial solution in $\FF_v.$ For $v$ is odd, by Hensel's lemma, this solution can be lifted to a nontrivial solution in $\Ocal_{K_v}.$ Hence $(a,b)_v=(a_1,b_1)_v=1.$
	\end{proof}
	
	\begin{lemma}\label{lemma Hensel lemma for Hilbert symbal}
		Let $K$ be a number field, and let $v$ be an odd place of $K.$ Let $a,b,c\in K_v^\times$ such that $v(b)<v(c).$ Then $(a,b+c)_v=(a,b)_v.$
	\end{lemma}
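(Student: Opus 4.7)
My approach is to show that $b+c$ and $b$ differ by a square in $K_v^\times$, after which the lemma follows immediately from bilinearity of the Hilbert symbol (which factors through $K_v^\times / K_v^{\times 2}$).

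First I would factor $b+c = b(1+c/b)$ and note that since $v(c) > v(b)$, the ratio $c/b$ lies in the maximal ideal of $\Ocal_{K_v}$, so $u := 1 + c/b$ is a principal unit (a unit congruent to $1$ modulo $\pi_v$). The crux of the argument is then to show $u \in K_v^{\times 2}$. I would apply Hensel's lemma to the polynomial $f(X) = X^2 - u$ at the approximate root $X_0 = 1$: we have $v(f(1)) = v(-c/b) \ge 1$ while $f'(1) = 2 \in \Ocal_{K_v}^\times$ because $v$ is odd, so Hensel's lemma produces $x \in \Ocal_{K_v}$ with $x^2 = u$.

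Once $u$ is known to be a square, combining $b+c = bu$ with the bilinearity of the Hilbert symbol gives $(a, b+c)_v = (a,b)_v (a,u)_v = (a,b)_v$, completing the proof.

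The only delicate point — the place where the hypothesis is genuinely used — is the verification that $2 \in \Ocal_{K_v}^\times$, which is exactly the condition that $v$ be odd; this is what makes the naive Hensel lift at $X_0 = 1$ succeed. Over a $2$-adic place one would need a sharper congruence on $c/b$ (something like $c/b \in 8 \Ocal_{K_v}$) to force $1+c/b$ into $K_v^{\times 2}$, and the statement of the lemma would have to be weakened accordingly.
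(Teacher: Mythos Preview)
Your proof is correct and follows essentially the same approach as the paper: factor $b+c = b(1+b^{-1}c)$, use Hensel's lemma (with $v$ odd) to show $1+b^{-1}c \in K_v^{\times 2}$, and conclude by bilinearity of the Hilbert symbol. Your write-up is more detailed than the paper's, but the argument is the same.
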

	
	\begin{proof}
		For $v(b)<v(c),$ we have $v(b^{-1}c)>0.$ By Hensel's lemma, we have $1+b^{-1}c\in K_v^{\times 2}.$ So $(a,b+c)_v=(a,b(1+b^{-1}c))_v=(a,b)_v.$
	\end{proof}

	\begin{lemma}\label{lemma openness for K^2 and O_K cross}
		Let $K$ be a number field, and let $v\in \Omega_K.$ Then $K_v^{\times 2}$ is an open subgroup of $K_v^\times.$ If $v\in \Omega_K^f,$ then $\Ocal_{K_v}^\times$ is also an open subgroup of $K_v^\times.$ So, they are nonempty open subset of $K_v.$ 
	\end{lemma}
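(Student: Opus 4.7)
The plan is to treat the archimedean and non-archimedean places separately, and in each case reduce openness of $K_v^{\times 2}$ to the existence of an open neighborhood of $1$ contained in $K_v^{\times 2}$, because $K_v^{\times 2}$ is a subgroup and multiplication by a fixed element is a homeomorphism of $K_v^\times$. For $v \in \infty_K$, this is immediate: if $K_v = \CC$ then $K_v^{\times 2} = K_v^\times$, and if $K_v = \RR$ then $K_v^{\times 2} = \RR_{>0}$, which is obviously open in $\RR^\times$.

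For $v \in \Omega_K^f$, the main tool is Hensel's lemma. If $v$ is odd, any $u \in 1 + \pi_v\Ocal_{K_v}$ reduces to $1$ modulo $\pi_v$, which is a nonzero square in $\FF_v$, so Hensel's lemma lifts $1$ to a square root of $u$ in $\Ocal_{K_v}^\times$; hence $1 + \pi_v\Ocal_{K_v} \subset K_v^{\times 2}$. If $v \in 2_K$, the naive lift fails, but applying Hensel's lemma to $f(X) = X^2 - u$ still works whenever $|f(1)| < |f'(1)|^2 = |2|_v^2$; thus there exists $N \geq 1$ (depending only on $v$) such that $1 + \pi_v^N \Ocal_{K_v} \subset K_v^{\times 2}$. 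In either case $K_v^{\times 2}$ contains an open neighborhood of $1$, and translating gives openness of $K_v^{\times 2}$ in $K_v^\times$.

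To finish with $\Ocal_{K_v}^\times$, observe that the normalized valuation $v \colon K_v^\times \to \ZZ$ is continuous (with $\ZZ$ discrete), so $\Ocal_{K_v}^\times = v^{-1}(\{0\})$ is open in $K_v^\times$; equivalently, $\Ocal_{K_v}^\times = \Ocal_{K_v} \setminus \pi_v\Ocal_{K_v}$ is the complement of a closed subset inside the open subring $\Ocal_{K_v}$. Finally, for the last sentence of the lemma, note that $K_v^\times = K_v \setminus \{0\}$ is open in $K_v$, so any subset of $K_v^\times$ open in $K_v^\times$ is open in $K_v$; both $K_v^{\times 2}$ and (in the finite case) $\Ocal_{K_v}^\times$ contain $1$ and are therefore nonempty open subsets of $K_v$.

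The only slightly delicate point is the dyadic case of Hensel's lemma, where one must be careful to use the stronger form $|f(a_0)| < |f'(a_0)|^2$ rather than the naive mod-$\pi_v$ form; everything else is formal.
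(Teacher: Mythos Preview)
Your proof is correct and follows essentially the same approach as the paper: use Hensel's lemma to exhibit an open neighborhood of $1$ inside $K_v^{\times 2}$, then translate. The paper is slightly more economical in that it writes a single uniform neighborhood $1 + p^3\Ocal_{K_v}$ (where $v\mid p$) that works for all finite $v$, rather than splitting into odd and dyadic cases, and it deduces openness of $\Ocal_{K_v}^\times$ from the same containment $1 + p^3\Ocal_{K_v} \subset \Ocal_{K_v}^\times$ rather than via continuity of the valuation; but these are cosmetic differences.
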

	
	\begin{proof}
		It is obvious for $v\in \infty_K.$ Consider $v\in \Omega_K^f.$
		Let $p$ be the prime number such that $v|p$ in $K.$ Then by Hensel's lemma, the group $K_v^{\times 2}\cap \Ocal_{K_v}^\times$ contains the set $1+ p^3 \Ocal_{K_v},$ which is an open subgroup of $K_v^\times.$   Hence  $K_v^{\times 2}$ and $\Ocal_{K_v}^\times$ are open subgroups of $K_v^\times.$
	\end{proof}

	\begin{lemma}\label{lemma openness for v(x)=n}
		Let $K$ be a number field, and let $v\in \Omega_K^f.$ For any $n\in \ZZ,$ the set $\{x\in K_v|v(x)=n\}$ is a nonempty open subset of $K_v.$
	\end{lemma}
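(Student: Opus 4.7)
The statement has two parts: nonemptiness and openness of the set $U_n := \{x\in K_v \mid v(x)=n\}$. The plan is to treat them separately, using only that $K_v$ is a non-archimedean local field with discrete valuation $v$.

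For nonemptiness, I would pick a uniformizer $\pi_v$ of $K_v$, that is, a generator of the maximal ideal of $\Ocal_{K_v}$; then $v(\pi_v)=1$ and so $\pi_v^n\in U_n$. This works equally well for $n\ge 0$ and $n<0$, since $\pi_v$ is a unit in $K_v^\times$.

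For openness, I would fix $x_0\in U_n$ and exhibit an explicit open neighbourhood of $x_0$ contained in $U_n$. The natural candidate is the additive coset $x_0+\pi_v^{n+1}\Ocal_{K_v}$, which is open in $K_v$ since $\pi_v^{n+1}\Ocal_{K_v}$ is an open ideal (a basic open neighbourhood of $0$ in the $v$-adic topology). For any $y$ in this neighbourhood, write $y=x_0+z$ with $v(z)\ge n+1>n=v(x_0)$; the strong (ultrametric) triangle inequality then gives $v(y)=\min(v(x_0),v(z))=n$, so $y\in U_n$.

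There is no real obstacle here — the argument is entirely standard and uses only the definition of a uniformizer together with the ultrametric property. The main thing to be careful about is just allowing $n$ to be an arbitrary integer (including negative values), which causes no issues since $\pi_v^n\in K_v^\times$ for all $n\in\ZZ$ and the coset $x_0+\pi_v^{n+1}\Ocal_{K_v}$ makes sense for any $n$.
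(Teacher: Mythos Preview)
Your proof is correct. The paper takes a slightly different (and more concise) route: it invokes the preceding lemma that $\Ocal_{K_v}^\times$ is an open subgroup of $K_v^\times$, and then simply observes that $\{x\in K_v^\times\mid v(x)=n\}=\pi_v^n\,\Ocal_{K_v}^\times$ is a multiplicative coset of this open subgroup, hence open and nonempty. You instead work additively, showing each $x_0\in U_n$ has the open neighbourhood $x_0+\pi_v^{n+1}\Ocal_{K_v}\subset U_n$ via the ultrametric inequality. Both arguments are entirely standard; the paper's version is a one-liner because it recycles the previous lemma, while yours is self-contained and does not rely on any earlier result.
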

	
	\begin{proof}
		By Lemma \ref{lemma openness for K^2 and O_K cross}, the set $\Ocal_{K_v}^\times$ is an open subgroup of $K_v^\times.$ Choose a prime element $\pi_v\in K_v.$ Then the set $\{x\in K_v^\times|v(x)=n\}=\pi_v^n \Ocal_{K_v}^\times,$ so it is a nonempty open subset of $K_v.$
	\end{proof}

	\begin{lemma}\label{lemma openness for hilbert symbal 1}
		Let $K$ be a number field, and let $v\in \Omega_K^f.$ For any $a\in K_v^\times,$ the sets $\{x\in K_v^\times|(a,x)_v=1\},$ $\{x\in K_v^\times|(a,x)_v=1\}\cap \Ocal_{K_v}$ and $\{x\in \Ocal_{K_v}^\times|(a,x)_v=1\}$ are nonempty open subsets of $K_v.$ 
	\end{lemma}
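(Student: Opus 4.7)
The plan is to observe that the Hilbert symbol gives rise to a continuous character $\chi_a\colon K_v^\times\to\{\pm 1\}$, $x\mapsto (a,x)_v$, and then read off the three claims from this. Nonemptiness is immediate for all three sets simultaneously because $(a,1)_v=1$, so $1$ lies in each; in particular one does not need to exhibit separate witnesses.

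For openness of the largest set $\{x\in K_v^\times:(a,x)_v=1\}$, I would use bilinearity of the Hilbert symbol to see that $\chi_a$ is a group homomorphism whose kernel contains $K_v^{\times 2}$. Thus $\chi_a$ factors through $K_v^\times/K_v^{\times 2}$, and Lemma \ref{lemma openness for K^2 and O_K cross} (openness of $K_v^{\times 2}$) makes this quotient discrete, so $\chi_a$ is continuous. Hence $\chi_a^{-1}(1)$ is open in $K_v^\times$; since $K_v^\times = K_v\setminus\{0\}$ is open in $K_v$, this set is open in $K_v$ as required.

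The other two sets are then intersections of $\chi_a^{-1}(1)$ with $\Ocal_{K_v}$ and $\Ocal_{K_v}^\times$ respectively. Both subsets are already known to be open in $K_v$: the former by the very definition of the non-archimedean topology on $K_v$, and the latter by Lemma \ref{lemma openness for K^2 and O_K cross}. Intersecting open sets yields open sets, so both remaining sets are open in $K_v$. There is really no obstacle here; the whole statement is a formal consequence of the continuity of $\chi_a$ together with the previous openness lemma, and no new local analysis is needed.
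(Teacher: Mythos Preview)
Your proof is correct and essentially identical to the paper's. The paper phrases the openness of $\{x\in K_v^\times:(a,x)_v=1\}$ as ``this set is a union of cosets of $K_v^{\times 2}$,'' which is exactly your observation that $\chi_a$ factors through the discrete quotient $K_v^\times/K_v^{\times 2}$; both arguments invoke Lemma~\ref{lemma openness for K^2 and O_K cross} for the openness of $K_v^{\times 2}$ and $\Ocal_{K_v}^\times$, and both use $1$ as the witness for nonemptiness.
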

	
	\begin{proof}
		For the unit $1$ belongs to these sets, they are nonempty. By Lemma \ref{lemma openness for K^2 and O_K cross}, the sets $K_v^{\times 2}$ and $\Ocal_{K_v}^\times$ are nonempty open subsets of $K_v.$ The set $\{x\in K_v^\times|(a,x)_v=1\}$ is a union of cosets of $K_v^{\times 2}$ in the group $K_v^\times.$  So the sets are open in $K_v.$
	\end{proof}

	\begin{lemma}\label{lemma openness for hilbert symbal -1}
		Let $K$ be a number field, and let $v\in \Omega_K^f.$ For any $a\in K_v^\times,$ the sets $\{x\in K_v^\times|(a,x)_v=-1\}$ and  $\{x\in K_v^\times|(a,x)_v=-1\}\cap \Ocal_{K_v}$ are open subsets of $K_v.$ Furthermore, if $a\notin K_v^{\times 2},$ then they are nonempty. 
	\end{lemma}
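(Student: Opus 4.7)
The plan is to handle openness and nonemptiness separately, following patterns already set up in the two preceding lemmas.

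For the openness, I would imitate the argument of Lemma \ref{lemma openness for hilbert symbal 1} up to a sign. The map $x \mapsto (a,x)_v$ is constant on cosets of $K_v^{\times 2}$ in $K_v^\times$; by Lemma \ref{lemma openness for K^2 and O_K cross}, each such coset is open in $K_v$. Hence $\{x \in K_v^\times : (a,x)_v = -1\}$, being a (possibly empty) union of such cosets, is open in $K_v$. Intersecting with the open set $\Ocal_{K_v}$ preserves openness, giving the second set as well.

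For the nonemptiness under the hypothesis $a \notin K_v^{\times 2}$, the key input is the nondegeneracy of the Hilbert pairing on $K_v^\times/K_v^{\times 2}$ (\cite[Chapter XIV, Corollary 7]{Se79}), which is already cited in the setup to this lemma. Since $a$ is nontrivial in the quotient, the character $(a,\cdot)_v$ is not identically trivial on $K_v^\times/K_v^{\times 2}$, so there exists some $y \in K_v^\times$ with $(a,y)_v = -1$. This settles nonemptiness of the first set.

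To push such a witness into $\Ocal_{K_v}$, I would fix a uniformizer $\pi_v \in K_v$ and replace $y$ by $y \pi_v^{2n}$ for any integer $n$ with $2n + v(y) \geq 0$. Since $\pi_v^{2n} \in K_v^{\times 2}$, the bilinearity of the Hilbert symbol gives $(a, y\pi_v^{2n})_v = (a,y)_v \cdot (a, \pi_v^{2n})_v = -1$, while $y\pi_v^{2n} \in \Ocal_{K_v}$ by construction. There is no real obstacle in this argument; the only nontrivial ingredient is the nondegeneracy of the Hilbert symbol, and everything else is routine coset-chasing.
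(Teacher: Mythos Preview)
Your proposal is correct and follows essentially the same route as the paper: openness via the coset decomposition modulo $K_v^{\times 2}$ and Lemma~\ref{lemma openness for K^2 and O_K cross}, nonemptiness via nondegeneracy of the Hilbert symbol, and the passage into $\Ocal_{K_v}$ by multiplying by a square (an even power of a uniformizer). The paper phrases this last step as ``multiplying a square element in $\Ocal_{K_v}$ to denominate an element in $K_v^\times$,'' which is exactly your $y \mapsto y\pi_v^{2n}$.
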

	
	\begin{proof}
		If the set $\{x\in K_v^\times|(a,x)_v=-1\}\neq \emptyset,$ then it is a union of cosets of $K_v^{\times 2}$ in the group $K_v^\times.$  By Lemma \ref{lemma openness for K^2 and O_K cross}, it is an open subset of $K_v.$ For $\Ocal_{K_v}$ is open in $K_v,$ the sets are open subsets of $K_v^\times.$  Nonemptiness is from the nondegeneracy of the bilinear form given by the Hilbert symbol, and from multiplying a square element in $\Ocal_{K_v}$ to denominate an element in $K_v^\times.$
	\end{proof}

	\begin{lemma}\label{lemma openness for hilbert symbal with odd valuation element}
		Let $K$ be a number field, and let $v\in \Omega_K^f.$ For any $a\in K_v^\times$ with $v(a)$ odd, the set $\{x\in \Ocal_{K_v}^\times|(a,x)_v=-1\}$ is a nonempty open subset of $K_v.$
	\end{lemma}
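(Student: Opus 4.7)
The plan is to dispatch openness immediately and then focus on nonemptiness, which is the only genuine content.

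For openness, I would simply note that the desired set is the intersection of $\{x\in K_v^\times : (a,x)_v=-1\}$ with $\Ocal_{K_v}^\times$. The first set is open in $K_v$ by Lemma \ref{lemma openness for hilbert symbal -1} (the hypothesis $a\notin K_v^{\times 2}$ needed there is automatic since $v(a)$ is odd), and $\Ocal_{K_v}^\times$ is open by Lemma \ref{lemma openness for K^2 and O_K cross}, so their intersection is open.

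For nonemptiness I would argue by contradiction: suppose $(a,u)_v=1$ for every $u\in\Ocal_{K_v}^\times$. The idea is to parley this into the statement that $(a,b)_v=1$ for every $b\in K_v^\times$, which by the nondegeneracy of the Hilbert pairing on $K_v^\times/K_v^{\times 2}$ would force $a\in K_v^{\times 2}$, contradicting the hypothesis that $v(a)$ is odd. The key identity is $(a,-a)_v=1$ (a standard property of the Hilbert symbol), which together with bilinearity gives $(a,a)_v=(a,-1)_v$. Since $-1\in\Ocal_{K_v}^\times$, our standing assumption yields $(a,-1)_v=1$, and hence $(a,a)_v=1$. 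Now write $a=\pi_v^{v(a)}w$ for a prime element $\pi_v\in K_v$ and a unit $w\in\Ocal_{K_v}^\times$; bilinearity gives
\[
1=(a,a)_v=(a,\pi_v)_v^{v(a)}\cdot(a,w)_v=(a,\pi_v)_v^{v(a)},
\]
using the hypothesis on $w$. Since $v(a)$ is odd, this forces $(a,\pi_v)_v=1$. Combined with $(a,u)_v=1$ for all units, every $b=\pi_v^n u$ in $K_v^\times$ satisfies $(a,b)_v=1$, which by nondegeneracy contradicts $v(a)$ being odd.

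The main obstacle, if there is one, is isolating the right reduction: a priori one knows only that some (possibly non-unit) $y\in K_v^\times$ pairs nontrivially with $a$, and one must convert this into a unit $x$. The trick of feeding $a$ into its own second slot—so that the odd valuation of $a$ gets absorbed into an odd power of $(a,\pi_v)_v$—is what makes the passage from "all units" to "all elements" painless. Everything else is bookkeeping with the bilinearity and symmetry of the Hilbert symbol.
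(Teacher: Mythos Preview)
Your argument is correct. The openness part matches the paper exactly. For nonemptiness, the paper takes a constructive route rather than your proof by contradiction: it starts with any $b\in K_v^\times$ satisfying $(a,b)_v=-1$ (supplied by nondegeneracy), and if $v(b)$ happens to be odd it replaces $b$ by $b'=-ab$, which still satisfies $(a,b')_v=(a,-a)_v(a,b)_v=-1$ but now has even valuation $v(a)+v(b)$; dividing by the square $\pi_v^{v(b')}$ then yields the desired unit. Both arguments rest on the same two ingredients, nondegeneracy and the identity $(a,-a)_v=1$, but deploy them in opposite directions: the paper uses $(a,-a)_v=1$ to adjust the valuation of a known witness, while you use it (via $-1\in\Ocal_{K_v}^\times$) to feed $a$ into its own second slot and extract $(a,\pi_v)_v=1$. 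The paper's version has the minor advantage of producing an explicit element; yours has the minor advantage of making the role of the oddness of $v(a)$ slightly more transparent.
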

	
	\begin{proof}
		By Lemmas \ref{lemma openness for K^2 and O_K cross} and \ref{lemma openness for hilbert symbal -1}, the set is open in $K_v.$ We need to show that it is nonempty. For $a\notin K_v^2,$ by the nondegeneracy of the bilinear form given by the Hilbert symbol, there exists an element $b\in K_v^\times$ such that $(a,b)_v=-1.$ If $v(b)$ is odd, let $b'=-ab.$ Then $(a,b')_v=(a,-ab)_v=(a,-a)_v(a,b)_v=-1.$ Replacing $b$ by $b'$ if necessary, we can assume that $v(b)$ is even. Choose a prime element $\pi_v\in K_v.$ Then $\pi_v^{-v(b)}\in K_v^{\times 2},$ so the element $b\pi_v^{-v(b)}$ is in this set.
	\end{proof}

	\begin{lemma}\label{lemma nonsquare element for split place}
		Let $L/K$ be an extension of number fields, and let $v\in \Omega_K\backslash\infty_K^c.$ We assume that $v'\in \Omega_L$ splits over $v,$ i.e. $L_{v'}=K_v.$  Given an element $a\in K,$ if $v$ is a  finite place, we assume that $v(a)$ is odd; if $v$ is an archimedean place, we assume $\tau_v(a)<0.$  Then $a\notin L^{\times 2}.$
	\end{lemma}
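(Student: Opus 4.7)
The plan is to argue by contradiction. Suppose that $a\in L^{\times 2}$, so $a=b^2$ for some $b\in L^{\times}$. Composing the inclusion $L\hookrightarrow L_{v'}$ with the identification $L_{v'}=K_v$ provided by the hypothesis that $v'$ splits over $v$, we obtain that $a$, viewed as an element of $K_v$, is a square in $K_v$. The strategy is then to exhibit, in each of the two cases for $v$, a direct obstruction to $a$ being a square in $K_v$.

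First I would treat the finite case. If $v\in\Omega_K^f$, then the $v$-adic valuation on $K_v$ sends squares to even integers. Thus $v(a)\in 2\ZZ$ whenever $a\in K_v^{\times 2}$. Since we assumed $v(a)$ odd, this is a contradiction.

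Next I would treat the archimedean case. Because $v\notin \infty_K^c$ and $v\notin\Omega_K^f$, the place $v$ is real, so $K_v=\RR$ and the embedding into $K_v$ is $\tau_v$. Every square in $\RR^{\times}$ is positive, but by hypothesis $\tau_v(a)<0$, again a contradiction.

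In both cases the assumption $a\in L^{\times 2}$ leads to a contradiction, so $a\notin L^{\times 2}$. There is no real obstacle here: the entire content of the lemma is that, for a place $v'$ of $L$ lying over $v$ with $L_{v'}=K_v$, non-squareness in $K_v$ can be detected via either odd valuation (finite case) or negative sign (real case), and these properties are preserved by the embedding $L\hookrightarrow L_{v'}=K_v$.
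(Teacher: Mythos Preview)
Your proof is correct and matches the paper's approach: both show that the hypotheses force $a\notin K_v^{\times 2}$ (odd valuation in the finite case, negative sign in the real case), and then use the identification $L_{v'}=K_v$ together with the embedding $L\hookrightarrow L_{v'}$ to conclude $a\notin L^{\times 2}$. The only cosmetic difference is that you phrase it by contradiction while the paper states it directly.
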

	
	\begin{proof}
		The condition that $v(a)$ is odd for the finite place $v,$ or that $\tau_v(a)<0$ for the archimedean place $v,$ implies that $a\notin K_v^{\times 2}.$ For $L_{v'}=K_v,$ we have $a\notin L_{v'}^{\times 2},$ so $a\notin L^{\times 2}.$
	\end{proof}

	\section{Main results for Ch\^atelet surfaces}
	
	In this section, we will construct three kinds of Ch\^atelet surfaces. Each kind in each following subsection has the arithmetic properties mentioned in  Subsection \ref{introduction Main results for Ch\^atelet surfaces}.
	

	Let $K$ be a number field. Ch\^atelet surfaces are smooth projective models of conic bundle surfaces defined by the equation
	\begin{equation}\label{equation}
		y^2-az^2=P(x)
	\end{equation} in $K[x,y,z]$ such that $a\in K^\times,$ and that $P(x)$ is a separable degree-$4$ polynomial in $K[x].$ Given an equation (\ref{equation}), let $V^0$ be the affine surface in $\AA^3_K$ defined by this equation. The natural smooth compactification $V$ of $V^0$ given in \cite[Section 7.1]{Sk01} is called the Ch\^atelet surface given by this equation, cf. \cite[Section 5]{Po09}.
	
	\begin{remark}\label{remark birational to PP^2}
		For any local field $K_v,$  if $a\in K_v^{\times 2},$ then $V$ is birational equivalent to $\PP^2$ over $K_v.$ By the implicit function theorem, there exists a $K_v$-point on $V.$
	\end{remark}
	
	\begin{remark}\label{remark the implicit function thm and local constant Brauer group}
		For any local field $K_v,$ by smoothness of $V,$ the implicit function theorem implies that the nonemptiness of $V^0(K_v)$ is equivalent to the nonemptiness of $V(K_v),$ and that $V^0(K_v)$ is open dense in $V(K_v).$ Given an element $A\in\Br(V),$ the evaluation of $A$ on $V(K_v)$ is locally constant. By the properness of $V,$ the space $V(K_v)$ is compact. So the set of all possible values of the evaluation of $A$ on $V(K_v)$ is finite. Indeed, by \cite[Proposition 7.1.2]{Sk01}, there only exist two possible values. It is determined by the evaluation of $A$ on $V^0(K_v).$ In particular, if the evaluation of $A$ on $V^0(K_v)$ is constant, then it is constant on $V(K_v).$ 
	\end{remark}
	
	\begin{remark}\label{remark independent on the choice of models} 
		In \cite[Proposition 6.1]{CTPS16}, it is shown that whether the Brauer-Manin set of a smooth, projective, and geometrically connected variety is empty, is determined by its birational equivalence class. Here birational equivalence means birational equivalence between smooth, projective, and geometrically connected varieties.
		It is proved in the papers \cite[Theorem B]{CTSSD87a,CTSSD87b} (also explained in the book \cite[Theorem 7.2.1]{Sk01}), that the Brauer-Manin obstruction to the Hasse principle and weak approximation is the only one for Ch\^atelet surfaces.
		Hence, all smooth projective models of a given equation (\ref{equation}) are the same as to the discussion of the Hasse principle, weak approximation, the failure of the Hasse principle explained by the Brauer-Manin obstruction, and weak approximation with Brauer-Manin obstruction.
	\end{remark}

	\begin{remark}\label{remark birational to Hasse-Minkowski theorem}
		If the polynomial $P(x)$ has a factor $x^2-a,$ i.e. there exists a degree-$2$ polynomial $f(x)$ such that $P(x)=f(x)(x^2-a),$ then $Y=\frac{xy+az}{x^2-a}$ and $Z=\frac{y+xz}{x^2-a}$ give a birational equivalence between $V$ and a quadratic surface given by $Y^2-aZ^2=f(x)$ with affine coordinates $(x,Y,Z).$ By the Hasse-Minkowski theorem and Remark \ref{remark independent on the choice of models}, the surface $V$ satisfies weak approximation. 
	\end{remark}

	In this section, we always use the following way to choose an element for the parameter $a$ in the equation (\ref{equation}).
	
	\subsubsection{Choosing an element for the parameter $a$ in the equation (\ref{equation})}\label{subsection choose an element a for S}
	
	Given an extension of number fields $L/K,$
	and a finite subset $S\subset \Omega_K\backslash (\infty_K^c\cup 2_K),$ we will choose an element $a\in \Ocal_K\backslash K^2$ with respect to these $L/K$ and $S$ in the following way.
	
	If $S=\emptyset,$ by Theorem \ref{theorem Chebotarev density theorem}, we can take a place $v_0\in \Omega_K^f\backslash 2_K$ splitting completely in $L.$ Then replace $S$ by $\{v_0\}$ to continue the following step.
	
	Now, suppose that $S\neq \emptyset.$ For $v\in \Omega_K,$ by Lemma \ref{lemma openness for K^2 and O_K cross}, the set $K_v^{\times 2}$ is a nonempty open subset of $K_v.$ For $v\in \Omega_K^f,$ by Lemma \ref{lemma openness for v(x)=n}, the set  $\{a\in K_v| v(a)$ is odd$\}$ is a nonempty open subset of $K_v.$  Using weak approximation for the affine line $\AA^1,$ we can choose an element $a\in K^\times$ satisfying the following conditions:
	\begin{itemize}
		\item $\tau_v(a)<0$ for all $v\in S\cap \infty_K,$
		\item $a\in K_v^{\times 2}$  for all $v\in 2_K,$
		\item $v(a)$ is odd for all $v\in S\backslash \infty_K.$
	\end{itemize}
	These conditions do not change by multiplying an element in $K^{\times 2},$ so we can assume $a\in \Ocal_K.$ The conditions that $v(a)$ is odd for all $v\in S\backslash \infty_K,$ and that $\tau_v(a)<0$ for all $v\in S\cap \infty_K,$ imply $a\in \Ocal_K\backslash K_v^2$ for all $v\in S.$ So $a\in \Ocal_K\backslash K^2.$ 
	
	\begin{remark}\label{remark choose an element a for S remark 1}
		Let $S'=\{v\in \infty_K^r | \tau_v(a)<0\}\cup \{v\in \Omega_K^f\backslash 2_K | v(a){\rm ~is ~odd} \},$ then $S'$ is a finite set. By the conditions that $\tau_v(a)<0$ for all $v\in S\cap \infty_K,$ and that $v(a)$ is odd for all $v\in S\backslash \infty_K,$ we have $S'\supset S.$ Then $S'\neq \emptyset.$
	\end{remark}
	
	\begin{remark}\label{remark choose an element a for S 2}
		If there exists one place in $S$ splitting completely in $L$ or $S=\emptyset,$ then by the choice of $a$ above and Lemma \ref{lemma nonsquare element for split place}, the element $a\in \Ocal_K\backslash L^2.$
	\end{remark}
	
	\begin{remark}\label{remark choose an element a enlarge S 3}
		For the choice of $a,$ we can enlarge $S$ in $\Omega_K\backslash (\infty_K^c\cup 2_K)$ if necessary. 
	\end{remark}

	\subsection{Ch\^atelet surfaces without $K_v$ point for any $v\in S$} In this subsection, we will construct a Ch\^atelet surface of the first kind mentioned in  Subsection \ref{introduction Main results for Ch\^atelet surfaces}.

	\subsubsection{Choice of parameters for the equation (\ref{equation})}\label{subsubsection Choice of parameters for V1}
	Given an extension of number fields $L/K,$
	and a finite subset $S\subset \Omega_K\backslash (\infty_K^c\cup 2_K),$ we choose an element $a\in \Ocal_K\backslash K^2$ as in Subsubsection \ref{subsection choose an element a for S}.
	
	If $S=\emptyset,$ then let $P(x)=1-x^4.$ Then the Ch\^atelet surface $V_1$ given by $y^2-az^2=1-x^4,$ has a rational point $(x,y,z)=(0,1,0).$ 
	
	Now, suppose that $S\neq\emptyset.$ We will choose an element $b\in K^\times$ with respect to the chosen $a$ in the following way.
	
	Let $S'=\{v\in \infty_K^r | \tau_v(a)<0\}\cup \{v\in \Omega_K^f\backslash 2_K | v(a){\rm ~is ~odd} \}$ be as in Remark \ref{remark choose an element a for S remark 1}, then $S'\supset S$ is a finite set. If $v \in S\backslash \infty_K,$ then $v(a)$ is odd, which implies $a\notin K_v^{\times2}.$ Then by Lemma \ref{lemma openness for hilbert symbal -1}, the set  $\{b\in K_v^\times|(a,b)_v=-1\}$ is a nonempty open subset of $K_v.$ If $v \in S'\backslash (S\cup \infty_K),$ then by Lemma \ref{lemma openness for hilbert symbal 1}, the set  $\{b\in K_v^\times|(a,b)_v=1\}$ is a nonempty open subset of $K_v.$ Using weak approximation for affine line $\AA^1,$ we can choose an element $b\in K^\times$ satisfying the following conditions:
	\begin{itemize}
		\item $\tau_v(b)<0$ for all $v\in S\cap \infty_K,$
		\item $\tau_v(b)>0$ for all $v\in (S'\backslash S)\cap \infty_K,$
		\item $(a,b)_v=-1$ for all $v\in S\backslash \infty_K,$
		\item $(a,b)_v=1$ for all $v\in S'\backslash (S\cup \infty_K).$
	\end{itemize}	
	
	We will choose an element $c\in K^\times$ with respect to the chosen $a,b$ in the following way.
	
	Let $S''=\{v\in \Omega_K^f\backslash 2_K| v(b) {\rm ~is ~odd} \},$ then $S''$ is a finite set. The same argument as in the previous paragraph, we can choose an element $c\in K^\times$ satisfying the following conditions:
	\begin{itemize}
		\item $c\in K_v^{\times 2}$ for all $v\in S,$
		\item $v(c)$ is odd for all $v\in S''\backslash S'.$
	\end{itemize}	
	These conditions do not change by multiplying an element in $K^{\times 2},$ so we can assume $b,c\in \Ocal_K.$
	
	Let $P(x)=b(x^4-ac),$ and let $V_1$ be the Ch\^atelet surface given by $y^2-az^2=b(x^4-ac).$

	\begin{proposition}\label{proposition no local point for S}
		For any extension of number fields $L/K,$ and any finite subset $S \subset \Omega_K\backslash (\infty_K^c\cup 2_K)$ splitting completely in $L,$ there exists a Ch\^atelet surface $V_1$ defined over $K$ such that $V_1(\AA_K^S)\subset V_1(\AA_L^{S_L})$ is nonempty, but that $V_1(K_v)=V_1(L_{v'})=\emptyset$  for all $v\in S$ and all $v'\in S_L.$
	\end{proposition}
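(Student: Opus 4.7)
The plan is to treat the trivial case $S = \emptyset$ separately---the surface $V_1\colon y^2 - az^2 = 1 - x^4$ then has the obvious $K$-rational point $(0,1,0)$, so nothing needs to be checked at $S$ and local points exist everywhere---and then focus on the substantive case $S \neq \emptyset$ using the construction $P(x) = b(x^4 - ac)$ of Subsubsection \ref{subsubsection Choice of parameters for V1}. The two conclusions, emptiness of $V_1(K_v)$ at $v \in S$ and nonemptiness at every $v \notin S$, will be proved separately, and the statements about $L$ will then follow because $S$ splits completely in $L$.

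For emptiness at $v \in S$, the archimedean case is immediate from signs: $\tau_v(a) < 0$, $\tau_v(b) < 0$, and $\tau_v(c) > 0$ (the last from $c \in K_v^{\times 2}$) make $y^2 - az^2 \geq 0$ while $b(x^4 - ac) < 0$ for every $x \in \RR$. At a finite $v \in S$, I would assume $(x,y,z) \in V_1(K_v)$ and derive a contradiction via the Hilbert symbol. The degenerate case $x^4 = ac$ is excluded because writing $c = c_0^2$ in $K_v^\times$ gives $a = (x^2/c_0)^2 \in K_v^{\times 2}$, contradicting $v(a)$ odd. In the generic case $P(x) \neq 0$, the factorization $x^4 - ac = (x^2 - c_0\sqrt{a})(x^2 + c_0\sqrt{a})$ realizes $x^4 - ac$ as a nonzero norm from $K_v(\sqrt{a})/K_v$, so $(a, x^4 - ac)_v = 1$; combined with the prescribed $(a,b)_v = -1$ this gives $(a, P(x))_v = -1$, whereas $P(x) = y^2 - az^2$ being a nonzero norm forces $(a, P(x))_v = 1$, the desired contradiction.

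For nonemptiness at $v \notin S$, I reduce to exhibiting some $x_0 \in K_v$ with $P(x_0) \neq 0$ and $(a, P(x_0))_v = 1$, which then produces a $K_v$-point of $V_1$ by solving $y^2 - az^2 = P(x_0)$. Whenever $a \in K_v^{\times 2}$---in particular at $v \in 2_K$, at $v \in \infty_K^c$, and at real $v \notin S'$---Remark \ref{remark birational to PP^2} gives a $K_v$-point directly. For real $v \in S' \setminus S$ the hypothesis $\tau_v(b) > 0$ lets me take $|x_0|$ large so that $b(x_0^4 - ac) > 0$. For finite odd $v \notin S$ I split by parities. When $v(a)$ and $v(b)$ are both even the choice $x_0 = \pi_v^{-1}$ makes $v(P(x_0))$ even, so Lemma \ref{lemma hilbert symbal lifting for odd prime} yields $(a, P(x_0))_v = 1$. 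When $v(a)$ is odd one has $v \in S' \setminus S$ and hence $(a,b)_v = 1$; taking $x_0 = \pi_v^{-1}$ again, $x_0^4 - ac = x_0^4(1 - ac\pi_v^4)$ is a square because $1 - ac\pi_v^4$ is a unit congruent to $1$, hence a square by Hensel (compare Lemma \ref{lemma Hensel lemma for Hilbert symbal}), so $(a, P(x_0))_v = (a,b)_v = 1$. The only remaining subcase is $v \in (S''\setminus S')\setminus S$---where $v(b)$ and $v(c)$ are both odd but $v(a)$ is even---and there I take $x_0 = 0$ so that $v(P(0)) = v(-abc)$ is even and Lemma \ref{lemma hilbert symbal lifting for odd prime} applies once more.

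The statements over $L$ follow at once: each $v \in S$ splits completely in $L$, so for $v' \in S_L$ above $v$ one has $L_{v'} = K_v$, whence $V_1(L_{v'}) = V_1(K_v) = \emptyset$; and the inclusion $V_1(\AA_K^S) \subset V_1(\AA_L^{S_L})$ comes from the diagonal embedding $V_1(K_v) \hookrightarrow \prod_{v' \mid v} V_1(L_{v'})$ at each $v \notin S$. I expect the main obstacle to be the bookkeeping in the finite-odd-place argument for nonemptiness: one must verify that the constraints placed on $a$, $b$, $c$ in Subsubsection \ref{subsubsection Choice of parameters for V1} really leave no combination of parities and Hilbert symbols uncovered at places outside $S$, and this is precisely the role of the auxiliary sets $S'$ and $S''$.
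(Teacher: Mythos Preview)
Your proof is correct and follows essentially the same case-by-case strategy as the paper: handle $S=\emptyset$ trivially, then for $S\neq\emptyset$ verify local solvability at every $v\notin S$ by exhibiting an $x_0$ with $(a,P(x_0))_v=1$, and local insolvability at every $v\in S$ by showing $(a,P(x))_v=-1$ for all $x$; the $L$-statements then follow from complete splitting.

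The only noteworthy difference is your treatment of emptiness at finite $v\in S$: you observe directly that $c=c_0^2$ in $K_v^\times$ makes $x^4-ac=(x^2)^2-a c_0^2$ a norm from $K_v(\sqrt a)$, so $(a,x^4-ac)_v=1$ in one stroke, whereas the paper obtains the same conclusion by a valuation trichotomy (comparing $4v(x)$ with $v(ac)$ and using that $v(ac)$ is odd to exclude equality). Your norm-form argument is a little cleaner; the paper's computation is more hands-on but identical in content. The nonemptiness bookkeeping you lay out matches the paper's partition into $(\infty_K\setminus S')\cup 2_K$, $(S'\setminus S)\cap\infty_K$, $S'\setminus(S\cup\infty_K)$, $S''\setminus S'$, and the generic finite places, so nothing is missing.
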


	\begin{proof}
		For the extension $L/K,$ and the finite set $S,$ we will check that the Ch\^atelet surface $V_1$ chosen as in Subsubsection \ref{subsubsection Choice of parameters for V1}, has the properties.
		
		For $S=\emptyset,$ it is clear.
		
		Now, suppose that $S\neq\emptyset.$ We will check these properties by local computation.
		
		Suppose that $v\in (\infty_K\backslash S')\cup 2_K.$ By the choice of $a,$ we have $a\in K_v^{\times 2}.$ By Remark \ref{remark birational to PP^2}, the surface $V_1$ admits a $K_v$-point.\\
		Suppose that $v\in (S'\backslash S)\cap \infty_K.$ Take $x_0\in K$ such that $\tau_v(x_0^4-ac)>0.$ By the choice of $b,$ we have $\tau_v(b(x_0^4-ac))>0.$ So $(a,b(x_0^4-ac))_v=1,$ which implies that $V_1^0$ admits a $K_v$-point with $x=x_0.$\\
		Suppose that $v\in S'\backslash (S\cup \infty_K),$ then by the choice of $b,$ we have $(a,b)_v=1.$  Take $x_0\in K_v$ such that the valuation $v(x_0)<0,$ then by Lemma \ref{lemma Hensel lemma for Hilbert symbal}, we have $(a,x_0^4-ac)_v=(a,x_0^4)_v=1.$ So $(a,b(x_0^4-ac))_v=(a,b)_v=1,$ which implies that $V_1^0$ admits a $K_v$-point with $x=x_0.$\\
		Suppose that $v\in S''\backslash S'.$ By the choice of $a,b,c,$ the valuations $v(a)$ and $v(-abc)$ are even.  By Lemma \ref{lemma hilbert symbal lifting for odd prime}, we have $(a,-abc)_v=1,$ which implies that $V_1^0$ admits a $K_v$-point with $x=0.$\\
		Suppose that $v\in \Omega_K^f\backslash  (S'\cup S''\cup 2_K).$  Take $x_0\in K_v$ such that the valuation $v(x_0)<0,$ then by Lemma \ref{lemma Hensel lemma for Hilbert symbal}, we have $(a,x_0^4-ac)_v=(a,x_0^4)_v=1.$ For $v(a)$ and $v(b)$ are even, by Lemma \ref{lemma hilbert symbal lifting for odd prime}), we have $(a,b)_v=1.$ So $(a,b(x_0^4-ac))_v=1,$ which implies that $V_1^0$ admits a $K_v$-point with $x=x_0.$\\
		So, the subset $V_1(\AA_K^S)\subset V_1(\AA_L^{S_L})$ is nonempty.
		
		Suppose that $v\in S\cap\infty_K.$ Then by the choice of $a,b,c,$ we have $\tau_v(a), \tau_v(b)$ and $\tau_v(ac)$ are negative. So $(a,b(x^4-ac))_v=-1$ for all $x\in K_v,$  which implies that $V_1^0$ has no $K_v$-point. By Remark \ref{remark the implicit function thm and local constant Brauer group}, we have $V_1(K_v)=\emptyset.$ \\	
		Suppose that $v\in S\backslash\infty_K.$ Then by the choice of $b,$ we have $(a,b)_v=-1.$  Let $x\in K_v.$ If $4v(x)<v(ac),$ then by Lemma \ref{lemma Hensel lemma for Hilbert symbal}, we have $(a,x^4-ac)_v=(a,x^4)_v=1.$ If $4v(x)>v(ac),$ then by Lemma \ref{lemma Hensel lemma for Hilbert symbal}, we have $(a,x^4-ac)_v=(a,-ac)_v=(a,c)_v=1$ (by the choice of $c,$ the last equality holds). For $c\in K_v^{\times 2},$ the valuation $v(ac)$ is odd. So, the equality $4v(x)=v(ac)$ cannot happen. In each case, we have $(a,b(x^4-ac))_v=(a,b)_v(a,x^4-ac)_v=-1,$ which implies that  $V_1^0$ has no $K_v$-point. By Remark \ref{remark the implicit function thm and local constant Brauer group}, we have $V_1(K_v)=\emptyset.$ \\
		So, the set $V_1(K_v)=\emptyset$ for all $v\in S.$	
		
		Take a place $v_0' \in S_L.$ Let $v_0\in S$ be the restriction of $v_0'$ on $K.$ By the assumption that $v_0$ splits completely in $L,$ we have $K_{v_0}=L_{v_0'}.$ Hence $V_1(K_{v_0})=V_1(L_{v_0'}).$ \\
		So, the set $V_1(L_{v'})=\emptyset$  for all $v'\in S_L.$
	\end{proof}
	
	\begin{remark}\label{remark irreducible and reducible of polynomial for no local point}
		By the choice of elements $a,c$ in Subsubsection \ref{subsubsection Choice of parameters for V1}, if $v\in S\backslash\infty_K,$ then
		by comparing the valuation, the polynomial  $P(x)$ is irreducible over $K_v.$ In this case, since the choice of $c$ is not unique, we choose another one to get a new polynomial $P'(X),$ so the polynomials $P(x)$ and $P'(x)$ are coprime in $K.$  For $v\in S\cap\infty_K,$ the polynomial  $P(x)$ is a product of two irreducible factors over $K_v.$  
	\end{remark}

	Using the construction method in Subsubsection \ref{subsubsection Choice of parameters for V1}, we have the following examples, which are special cases of Proposition \ref{proposition no local point for S}. They will be used for further discussion.

	\begin{eg}\label{example2: construction of V_infty}
		Let $K=\QQ,$ and let $\zeta_7$ be a primitive $7$-th root of unity. Let $\alpha=\zeta_7+\zeta_7^{-1}$ with the minimal polynomial $x^3+x^2-2x-1.$ Let  
		$L=\QQ(\alpha).$  Then the degree $[L:K]=3.$ Let $S=\{29\}.$ For $29\equiv 1\mod 7,$ the place $29$ splits completely in $L,$ indeed in $\QQ(\zeta_7).$ Using the construction method in Subsubsection \ref{subsubsection Choice of parameters for V1}, we choose data: $S=\{29\},~S'=\{13,29\},~S''=\{7\},~a=377,~b=14,~c=238$ and  	
		$P(x)=14(x^4-89726).$ Then the Ch\^atelet surface given by $y^2-377z^2=P(x),$
		has the properties of  Proposition \ref{proposition no local point for S}. 
	\end{eg}

	\begin{eg}\label{example3: construction of V_infty}
		Let $K=\QQ,$ and let $L=\QQ(\sqrt{3}).$  Using the construction method in Subsubsection \ref{subsubsection Choice of parameters for V1}, we choose data: $S=\{\infty_K\},~S'=\infty_K\cup \{23\},~S''=\{5\},~a=-23,~b=-5,~c=5$ and  	
		$P(x)=-5(x^4+115).$ Then the Ch\^atelet surface given by $y^2+23z^2=P(x),$
		has the properties of  Proposition \ref{proposition no local point for S}. 
	\end{eg}

	\begin{eg}\label{example3: construction of V_0}
		Let $K=\QQ,$ and let $L=\QQ(\sqrt{3}).$ Then the place $23$ splits completely in $L.$ Using the construction method in Subsubsection \ref{subsubsection Choice of parameters for V1}, we choose data: $S=\{23\},~S'=\infty_K\cup\{23\},~S''=\{5\},~a=-23,~b=5,~c=35$ and  	
		$P(x)=5(x^4+805).$ Then the Ch\^atelet surface given by $y^2+23z^2=P(x),$
		has the properties of  Proposition \ref{proposition no local point for S}. 
	\end{eg}

	\subsection{Ch\^atelet surfaces with rational points and not satisfying weak approximation}

	Given an number field $K,$ Liang \cite[Proposition 3.4]{Li18} constructed a Ch\^atelet surface over $K,$ which has a $K$-rational point and does not satisfy weak approximation off $\infty_K.$ Using the same method as in \cite[Section 5]{Po09} to choose the parameters for the equation (\ref{equation}), he constructed a Ch\^atelet surface, and there exists an element in the Brauer group of this surface, which has two different local invariants on a given finite place, i.e. this element gives an obstruction to weak approximation for this surface. In this subsection, we generalize it.

	Next, we will construct a Ch\^atelet surface of the second kind mentioned in  Subsection \ref{introduction Main results for Ch\^atelet surfaces}.
	
	\subsubsection{Choice of parameters for the equation (\ref{equation})}\label{subsubsection Choice of parameters for V2}
	Given an extension of number fields $L/K,$
	and a finite subset $S\subset \Omega_K\backslash (\infty_K^c\cup 2_K),$ we choose an element $a\in \Ocal_K\backslash K^2$ as in Subsubsection \ref{subsection choose an element a for S}.
	
	We will choose an element $b\in K^\times$ with respect to the chosen $a$ in the following way.
	
	Let $S'=\{v\in \infty_K^r | \tau_v(a)<0\}\cup \{v\in \Omega_K^f\backslash 2_K | v(a){\rm ~is ~odd} \}$ be as in Remark \ref{remark choose an element a for S remark 1}, then $S'\supset S$ is a finite set. 
	By Lemma \ref{lemma openness for v(x)=n}, for $v\in S\backslash \infty_K,$ the set $\{b\in K_v|v(b)=-v(a)\}$ is a nonempty open subset of $K_v;$ for $v\in S'\backslash (S\cup \infty_K),$ the set $\{b\in K_v|v(b)=v(a)\}$ is a nonempty open subset of $\Ocal_{K_v}.$ By Lemma \ref{lemma strong approximation for A^1}, we can choose a nonzero element $b\in \Ocal_S[1/2]$ satisfying the following conditions:
	\begin{itemize}
		\item $v(b)=-v(a)$ for all $v\in S\backslash \infty_K,$
		\item $v(b)=v(a)$  for all $v\in S'\backslash (S\cup \infty_K).$
	\end{itemize}	
	
	We will choose an element $c\in K^\times$ with respect to the chosen $a,b$ in the following way.

	Let $S''=\{v\in \Omega_K^f\backslash 2_K| v(b)\neq 0 \},$ then $S''$ is a finite set and $S'\backslash \infty_K \subset S''.$ 
	By Theorem \ref{theorem Chebotarev density theorem}, we can take two different finite places $v_1,v_2\in \Omega_K^f\backslash  S''$ splitting completely in $L.$ 	
	If $v \in S\backslash \infty_K,$ then $v(a)$ is odd. In this case, by Lemma \ref{lemma openness for hilbert symbal with odd valuation element}, the set  $\{c\in \Ocal_{K_v}^\times|(a,c)_v=-1\}$ is a nonempty open subset of $\Ocal_{K_v}.$ 
	If $v\in \{v_1,v_2\},$ then $b\in \Ocal_{K_v}^\times.$ In this case, by Lemma \ref{lemma openness for v(x)=n}, the sets $\{c\in K_v|v(c)=1\}$ and $\{c\in K_v|v(1+cb^2)=1\}$ are nonempty open subsets of $\Ocal_{K_v}.$ Also by Lemma \ref{lemma strong approximation for A^1}, we can choose a nonzero element $c\in\Ocal_K[1/2]$ satisfying the following conditions:
	\begin{itemize}
		\item $\tau_v(1+cb^2)<0$ for all $v\in S\cap \infty_K,$
		\item $\tau_v(c)>0$ for all $v\in (S'\backslash S)\cap \infty_K,$
		\item $(a,c)_v=-1$ and $v(c)=0$ for all $v\in S\backslash \infty_K,$
		\item $v_1(c)=1$ and $v_2(1+cb^2)=1$ for the chosen $v_1,v_2$ above.
	\end{itemize}
	
	Let $P(x)=(cx^2+1)((1+cb^2)x^2+b^2),$ and let $V_2$ be the Ch\^atelet surface given by $y^2-az^2=(cx^2+1)((1+cb^2)x^2+b^2).$

	\begin{proposition}\label{proposition the valuation of Brauer group on local points are fixed outside S and take two value on S}
		For any extension of number fields $L/K,$ and any finite subset $S \subset \Omega_K\backslash (\infty_K^c\cup 2_K)$ splitting completely in $L,$
		there exists a Ch\^atelet surface $V_2$ defined over $K,$ which has the following properties.
		\begin{itemize}
			\item The Brauer group $\Br(V_2)/\Br(K)\cong\Br(V_{2L})/\Br(L)\cong \ZZ/2\ZZ,$ is generated by an element $A\in \Br(V_2).$ The subset $V_2(K)\subset V_2(L)$ is nonempty. 	
			\item For any $v\in S,$ there exist $P_v$ and $Q_v$ in $V_2(K_v)$ such that the local invariants $\inv_v(A(P_v))=0$ and $\inv_v(A(Q_v))=\half.$  For any other $v\notin S,$ and any $P_v\in V_2(K_v),$ the local invariant $\invap=0.$	
			\item For any $v'\in S_L,$ there exist $P_{v'}$ and $Q_{v'}$ in $V_2(L_{v'})$ such that the local invariants $\inv_{v'}(A(P_{v'}))=0$ and $\inv_{v'}(A(Q_{v'}))=\half.$  For any other $v'\notin S_L,$ and any $P_{v'}\in V_2(L_{v'}),$ the local invariant $\inv_{v'}(A(P_{v'})) =0.$
		\end{itemize}
	\end{proposition}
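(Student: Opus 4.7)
The plan is to take $A \in \Br(V_2)$ to be the class of the quaternion algebra $(a, cx^2+1)$ over $K(V_2)$; this is unramified on $V_2$ because $cx^2+1$ is a factor of $P(x)$. The $K$-rational point $(x,y,z)=(0,b,0)$ on the affine model (where $P(0)=b^2$) gives $V_2(K) \subset V_2(L)$ nonempty. Using the fact that the two quadratic factors of $P(x)$ are coprime (which can be read off from their differing valuations at the auxiliary places $v_1, v_2$) together with $a \notin L^{\times 2}$ (Remark \ref{remark choose an element a for S 2}), the standard Brauer-group description for Ch\^atelet surfaces (\cite[Proposition 7.1.1]{Sk01}) yields $\Br(V_2)/\Br(K) \cong \Br(V_{2L})/\Br(L) \cong \ZZ/2\ZZ$, generated by $A$.

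The main work is the local invariant computation. For $P_v = (x_v, y_v, z_v) \in V_2^0(K_v)$ with $cx_v^2 + 1 \neq 0$, one has $\inv_v(A(P_v)) = \half$ exactly when the Hilbert symbol $(a, cx_v^2+1)_v = -1$, and by Remark \ref{remark the implicit function thm and local constant Brauer group} it suffices to compute on the affine part. I would split $\Omega_K$ into place-classes in parallel with the proof of Proposition \ref{proposition no local point for S}: at $v \in 2_K \cup \infty_K^c \cup (\infty_K^r \setminus S')$, the element $a$ is a square, so the symbol is trivially $1$; at archimedean $v \in S' \setminus S$, the signs $\tau_v(c) > 0$ and $\tau_v(a) < 0$ force $cx_v^2+1>0$ and again $(a, cx_v^2+1)_v = 1$; at the remaining finite places outside $S$, the carefully matched conditions on $v(b), v(c)$ (notably $v(b) = \pm v(a)$ on $S' \setminus \infty_K$ and $v(c)=0$ on $S$) combine with Lemmas \ref{lemma hilbert symbal lifting for odd prime} and \ref{lemma Hensel lemma for Hilbert symbal} to give $(a, cx_v^2+1)_v = 1$ uniformly in $x_v$.

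The decisive case is $v \in S$. For any $v \in S$, the point $P_v = (0, b, 0)$ gives $(a, 1)_v = 1$ and invariant $0$. For a point of invariant $\half$: at finite $v \in S$, I would take $x_v$ of sufficiently negative valuation, so that Lemma \ref{lemma Hensel lemma for Hilbert symbal} reduces $(a, cx_v^2+1)_v$ to $(a, cx_v^2)_v = (a, c)_v = -1$ (using the conditions $(a,c)_v=-1$ and $v(c)=0$ from the choice of $c$), with a parallel calculation showing $(a, P(x_v))_v = 1$ so a lift $Q_v \in V_2^0(K_v)$ exists; at archimedean $v \in S$, I would exploit $\tau_v(1+cb^2) < 0$ together with $\tau_v(a)<0$ to pick $x_v$ making $cx_v^2+1$ negative while keeping $P(x_v)$ representable by $y^2-az^2$. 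Transition to $L$ is immediate at $v' \in S_L$ because $L_{v'} = K_v$ by complete splitting; at $v' \notin S_L$ lying over $v \in \Omega_K$, the same case analysis carries over to $L_{v'}$ because squareness of $a$ and parities of valuations are preserved under the finite extension $L_{v'}/K_v$. The principal obstacle is the sheer bookkeeping: each place-class uses a different subset of the conditions built into $a,b,c$ in Subsubsection \ref{subsubsection Choice of parameters for V2}, and one must verify that every such condition is essential at one place and harmless at every other.
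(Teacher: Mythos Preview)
Your plan matches the paper's proof in structure and in every essential choice: the generator $A=(a,cx^2+1)$, the rational point $(0,b,0)$, the use of $v_1,v_2$ to force irreducibility of both quadratic factors (and to separate them from $K(\sqrt a)$), and the place-by-place Hilbert-symbol analysis.

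One point deserves sharpening. At the finite places $v\notin S$ you write that the conditions on $a,b,c$ ``combine \dots\ to give $(a,cx_v^2+1)_v=1$ uniformly in $x_v$.'' This is the correct target, but it cannot be read off from the single factor $cx^2+1$: there are $x_v$ with $(a,cx_v^2+1)_v=-1$ even when $v(a)$ is even. The paper's mechanism (and the reason your invocation of the conditions on $v(b)$ is relevant at all) is to use that on $V_2^0$ one has the \emph{second} representation $A=(a,(1+cb^2)x^2+b^2)$, and to argue by contradiction: if $(a,cx^2+1)_v=-1$, then $v(cx^2+1)$ is odd (Lemma~\ref{lemma hilbert symbal lifting for odd prime}), which pins down $v(x)$ enough to force $(a,(1+cb^2)x^2+b^2)_v=1$ via Lemma~\ref{lemma Hensel lemma for Hilbert symbal}, contradicting the norm relation on $V_2^0$. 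You should make this two-factor trick explicit; it is the crux of the cases $v\in S'\setminus(S\cup\infty_K)$ and $v\in\Omega_K^f\setminus(S'\cup 2_K)$.

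A minor caution on the passage to $L$: your remark that ``parities of valuations are preserved'' is false in general (ramification multiplies by $e(v'|v)$). Fortunately the arguments at $v\notin S$ never use that $v(a)$ is \emph{odd}; they use only that $v(a)$ is even (preserved under multiplication by $e$), or that $v(a)=v(b)>0$ and $v(c)\ge 0$ (equalities and nonnegativity are preserved). So the transfer to $L_{v'}$ goes through, but for the right reason.
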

	
	\begin{proof}
		For the extension $L/K,$ and the finite set $S,$ we will check that the Ch\^atelet surface $V_2$ chosen as in Subsubsection \ref{subsubsection Choice of parameters for V2}, has the properties.

		We need to prove the statement about the Brauer group, and find the element $A$ in this proposition.
		
		By the choice of the places $v_1,$ the polynomial $x^2+c$ is an Eisenstein polynomial, so it is irreducible over $K_{v_1}.$ Since $v_1(a)$ is even, we have $K(\sqrt{a})K_{v_1}\ncong K_{v_1}[x]/(cx^2+1).$ So $K(\sqrt{a})\ncong K[x]/(cx^2+1).$ The same argument holds for the place $v_2$ and polynomial $(1+cb^2)x^2+b^2.$	
		For all places of $S$ split completely in $L,$ then by Remark \ref{remark choose an element a for S 2}, we have $a\in \Ocal_K\backslash L^2.$ 
		By the splitting condition of $v_1, v_2,$ we have $L(\sqrt{a})\ncong L[x]/(cx^2+1)$ and $L(\sqrt{a})\ncong L[x]/((1+cb^2)x^2+b^2).$ So $P(x)=(cx^2+1)((1+cb^2)x^2+b^2)$ is separable and a product of two degree-2 irreducible factors over $K$ and $L.$ 
		According to \cite[Proposition 7.1.1]{Sk01}, the Brauer group $\Br(V_2)/\Br(K)\cong\Br(V_{2L})/\Br(L)\cong \ZZ/2\ZZ.$ Furthermore, by Proposition 7.1.2 in loc. cit, we take the quaternion algebra $A=(a,cx^2+1)\in \Br(V_2)$ as a generator element of this group. Then we have the equality $A=(a,cx^2+1)=(a,(1+cb^2)x^2+b^2)$ in $\Br(V_2).$ 
		
		For $(x,y,z)=(0,b,0)$ is a rational point on $V_2^0,$ the set $V_2(K)$ is nonempty. We denote this rational point by $Q_0.$
		
		We need to compute the evaluation of $A$ on $V_2(K_v)$ for all $v\in \Omega_K.$ 
		
		For any $v\in \Omega_K,$ the local invariant $\invaqo=0.$ By Remark \ref{remark the implicit function thm and local constant Brauer group}, it suffices to compute the local invariant $\invap$ for all $P_v\in V_2^0(K_v).$\\	
		Suppose that $v\in (\infty_K\backslash S')\cup 2_K.$  Then $a\in K_v^{\times 2},$ so $\invap=0$ for all $P_v\in V_2(K_v).$\\
		Suppose that $v\in (S'\backslash S)\cap \infty_K.$ For any $x\in K,$ by the choice of $c,$ we have $\tau_v(cx^2+1)>0.$ Then $(a,cx^2+1)_v=1,$ so $\invap=0$ for all $P_v\in V_2(K_v).$\\
		Suppose that $v\in S'\backslash (S\cup \infty_K).$ Take an arbitrary $P_v\in V_2^0(K_v).$ If $\invap=1/2,$ then $(a,cx^2+1)_v=-1=(a,(1+cb^2)x^2+b^2)_v$ at $P_v.$ By Lemma \ref{lemma Hensel lemma for Hilbert symbal}, the first equality implies $v(x)\leq 0.$ For $v(a)=v(b)>0$ and $v(c)\geq 0,$ by Lemma \ref{lemma Hensel lemma for Hilbert symbal}, we have  $(a,(1+cb^2)x^2+b^2)_v=(a,x^2)_v=1,$ which is a contradiction. So $\invap=0.$\\	
		Suppose that $v\in \Omega_K^f\backslash (S'\cup 2_K ).$ Take an arbitrary $P_v\in V_2^0(K_v).$ If $\invap=1/2,$ then $(a,cx^2+1)_v=-1=(a,(1+cb^2)x^2+b^2)_v$ at $P_v.$ For $v(a)$ is even, by Lemma \ref{lemma hilbert symbal lifting for odd prime}, the first equality implies that $v(cx^2+1)$ is odd. For $c\in\Ocal_K[1/2],$ we have $v(x)\leq 0.$ So $v(c+x^{-2})$ is odd and positive. For $v(b)\geq 0,$ by Hensel's lemma, we have $1+b^2(c+x^{-2})\in K_v^{\times 2}.$ So $(a,(1+cb^2)x^2+b^2)_v=(a,x^2)_v(a,1+b^2(c+x^{-2}))_v=1,$ which is a contradiction. So $\invap=0.$
		
		Suppose that $v\in S\cap\infty_K.$ Take $P_v=Q_0,$ then $\invap=0.$ By the choice of $b,c,$ we have $\tau_v(\frac{b^2}{-cb^2-1})>\tau_v(\frac{1}{-c})>0.$ Take $x_0\in K$ such that $\tau_v(x_0)>\sqrt{\tau_v(\frac{b^2}{-cb^2-1})},$ then $\tau_v((cx_0^2+1)((1+cb^2)x_0^2+b^2))>0$ and $\tau_v(cx_0^2+1)<0.$ So there exists a $Q_v\in V_2^0(K_v)$ with $x=x_0.$ Then $\invaq=\half.$\\	
		Suppose that $v\in S\backslash\infty_K.$ Take $P_v=Q_0,$ then $\invap=0.$ Take $x_0\in K_v$ such that  $v(x_0)< 0.$ For $v(b)=-v(a)<0$ and $v(c)=0,$ by Lemma \ref{lemma Hensel lemma for Hilbert symbal}, we have $(a,cx_0^2+1)_v=(a,cx_0^2)_v=(a,c)_v$ and $(a,(1+cb^2)x_0^2+b^2)_v=(a,cb^2x_0^2)_v=(a,c)_v.$ So
		$(a,(cx_0^2+1)((1+cb^2)x_0^2+b^2))_v=(a,c)_v(a,c)_v=1.$ Hence, there exists a $Q_v\in V_2^0(K_v)$ with $x=x_0.$ For $(a,c)_v=-1,$ we have $\invaq=\half.$ 	
		
		Finally, we need to compute the evaluation of $A$ on $V_2(L_{v'})$ for all $v'\in \Omega_L.$
		
		For any $v'\in \Omega_L,$ the local invariant $\inv_{v'}(A(Q_0))=0.$\\
		Suppose that $v' \in S_L.$ Let $v\in \Omega_K$ be the restriction of $v'$ on $K.$ By the assumption that $v$ is split completely in $L,$ we have $K_v=L_{v'}.$ So $V_2(K_v)=V_2(L_{v'}).$	By the argument already shown, there exist $P_v, Q_v\in V_2(K_v)$ such that $\inv_v(A(P_v))=0$ and $\inv_v(A(Q_v))=\half.$ View $P_v, Q_v$ as elements in $V_2(L_{v'}),$ and let $P_{v'}=P_v$ and $Q_{v'}=Q_v.$ Then $\inv_{v'}(A(P_{v'}))=\invap=0$ and $\inv_{v'}(A(Q_{v'}))=\invaq=\half.$ \\
		Suppose that $v'\in \Omega_L\backslash S_L.$ This local computation is the same as the case $v\in \Omega_K\backslash S.$	
	\end{proof}

	\begin{remark}\label{remark union of the local invariant 0 and one-half}
		For any $v\in S,$ and any $P_v\in V_2(K_v),$  the local invariant of the evaluation of $A$ on $P_v$ is $0$ or $\half.$ Let $U_1=\{P_v\in V_2(K_v)| \invap=0\}$ and $U_2=\{P_v\in V_2(K_v)| \invap=\half\}.$ Then $U_1$ and $U_2$ are nonempty disjoint open subsets of $V_2(K_v),$ and $V_2(K_v)=U_1\bigsqcup U_2.$
	\end{remark}

	
	The following proposition states that the surface $V_2$ in Proposition \ref{proposition the valuation of Brauer group on local points are fixed outside S and take two value on S}, has the following weak approximation properties.
	
	\begin{proposition}\label{proposition the valuation of Brauer group on local points are fixed outside S and take two value on S property}
		Given an extension of number fields $L/K,$
		and a finite subset $S \subset \Omega_K\backslash (\infty_K^c\cup 2_K)$ splitting completely in $L,$ let $V_2$ be a Ch\^atelet surface satisfying those properties of Proposition \ref{proposition the valuation of Brauer group on local points are fixed outside S and take two value on S}. If $S=\emptyset,$ then $V_2$ and $V_{2L}$ satisfy weak approximation.  If $S\neq \emptyset,$ then $V_2$ satisfies weak approximation off $S'$ for any finite subset $S'\subset \Omega_K$ such that $S'\cap S\neq \emptyset,$ while it fails for any finite subset $S'\subset \Omega_K$ such that $S'\cap S= \emptyset.$ And in the case $S\neq \emptyset,$ the surface $V_{2L}$ satisfies weak approximation off $T$ for any finite subset $T\subset \Omega_L$ such that $T\cap S_L\neq \emptyset,$ while it fails for any finite subset $T\subset \Omega_L$ such that $T\cap S_L= \emptyset.$
	\end{proposition}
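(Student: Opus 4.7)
The plan is to reduce everything to a purely local computation on the global reciprocity sum. Since Ch\^atelet surfaces satisfy weak approximation with Brauer-Manin obstruction (Remark \ref{remark independent on the choice of models}, citing \cite{CTSSD87a,CTSSD87b}), $V_2(K)$ is dense in $V_2(\AA_K)^{\Br}$; properness of $V_2$ makes the projection $pr^{S'}$ proper with compact fibers, so $pr^{S'}(V_2(\AA_K)^{\Br})$ is closed in $V_2(\AA_K^{S'})$. Hence weak approximation off $S'$ holds if and only if $pr^{S'}(V_2(\AA_K)^{\Br})=V_2(\AA_K^{S'})$, and the same reduction applies to $V_{2L}$ with $T$ in place of $S'$.

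By Proposition \ref{proposition the valuation of Brauer group on local points are fixed outside S and take two value on S}, $\Br(V_2)/\Br(K)$ is generated by $A$, so the Brauer-Manin condition on an adelic point $(P_v)$ reduces to the single equation $\sum_v\inv_v(A(P_v))=0$ in $\QQ/\ZZ$; moreover only $v\in S$ can contribute a nonzero invariant, and at such $v$ both values $0$ and $\half$ are attained. The case $S=\emptyset$ is then immediate: the sum is vacuous, hence $V_2(\AA_K)^{\Br}=V_2(\AA_K)$, and weak approximation follows.

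For $S\neq\emptyset$ I would split on whether $S'$ meets $S$. If $S'\cap S\neq\emptyset$, fix a witness place $v_0\in S'\cap S$; given any adelic point off $S'$, compute $s=\sum_{v\in S\setminus S'}\inv_v(A(P_v))\in\{0,\half\}$, use Proposition \ref{proposition the valuation of Brauer group on local points are fixed outside S and take two value on S} to pick $P_{v_0}\in V_2(K_{v_0})$ with $\inv_{v_0}(A(P_{v_0}))=s$, and fill in the remaining $v\in S'$ arbitrarily; the result projects to the given adelic point and lies in $V_2(\AA_K)^{\Br}$, so $pr^{S'}(V_2(\AA_K)^{\Br})=V_2(\AA_K^{S'})$. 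If instead $S'\cap S=\emptyset$, pick any $v_0\in S$ and consider the subset $U\subset V_2(\AA_K^{S'})$ of adelic points with $\inv_{v_0}(A(P_{v_0}))=\half$ and $\inv_v(A(P_v))=0$ for all other $v\in S\setminus S'$; this $U$ is nonempty and open by Remark \ref{remark union of the local invariant 0 and one-half}, and since every extension to $S'$ contributes $0$ to the sum, $U$ is disjoint from $pr^{S'}(V_2(\AA_K)^{\Br})$ and thus from $V_2(K)$, so weak approximation off $S'$ fails.

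The argument for $V_{2L}$ is parallel: because each $v\in S$ splits completely in $L$, any $v'\in S_L$ above $v$ satisfies $L_{v'}=K_v$ with the same local invariant dichotomy, while places outside $S_L$ contribute $0$ to the global sum, so the same bookkeeping works with $T$ in place of $S'$. The main obstacle is simply to line up the cases cleanly; the serious inputs---the Brauer group computation, the explicit local invariant structure, weak approximation with Brauer-Manin obstruction for Ch\^atelet surfaces, and the global reciprocity law---have already been put in place by earlier results.
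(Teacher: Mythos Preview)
Your approach is essentially the paper's: both reduce to the Colliot-Th\'el\`ene--Sansuc--Swinnerton-Dyer theorem that $V_2(K)$ is dense in $V_2(\AA_K)^{\Br}$, then use the fact that $\Br(V_2)/\Br(K)$ is cyclic generated by $A$ with the known local behavior to balance the reciprocity sum at a place $v_0\in S\cap S'$. Your packaging via the closed image $pr^{S'}(V_2(\AA_K)^{\Br})$ is a clean reformulation, but the content is the same.

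There is one small slip. In the case $S'\cap S\neq\emptyset$, after choosing $P_{v_0}$ with invariant $s$ you write ``fill in the remaining $v\in S'$ arbitrarily.'' If $(S\cap S')\setminus\{v_0\}$ is nonempty, an arbitrary choice there can contribute extra $\tfrac12$'s and break the sum. You should instead fill in those places with points of invariant $0$ (e.g.\ the global point $Q_0$), and only the places of $S'\setminus S$ may be filled arbitrarily. With that correction the argument goes through and matches the paper's proof, which sidesteps the issue by proving the stronger statement of weak approximation off the single place $\{v_0\}$.
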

	
	\begin{proof}
		
		According to \cite[Theorem B]{CTSSD87a,CTSSD87b}, the Brauer-Manin obstruction to the Hasse principle and weak approximation is the only one for Ch\^atelet surfaces, so $V_2(K)$ is dense in $V_2(\AA_K)^{\Br}.$
		
		Suppose that $S=\emptyset,$ then for any $(P_v)_{v\in\Omega_K}\in V_2(\AA_K),$ by Proposition \ref{proposition the valuation of Brauer group on local points are fixed outside S and take two value on S}, the sum $\sum_{v\in \Omega_K}\inv_v(A(P_v))=0.$ For $\Br(V_2)/\Br(K)$ is generated by the element $A,$ we have $V_2(\AA_K)^{\Br}= V_2(\AA_K).$ So $V_2(K)$ is dense in $V_2(\AA_K)^{\Br}= V_2(\AA_K),$ i.e. the surface $V_2$ satisfies weak approximation.  
		
		Suppose that $S'\cap S\neq \emptyset.$ Take $v_0\in S'\cap S.$ 
		For any finite subset $R\subset \Omega_K\backslash\{v_0\},$ take a nonempty open subset $M=V_2(K_{v_0})\times \prod_{v\in R}U_v\times \prod_{v\notin R \cup\{v_0\}}V_2(K_v)\subset V_2(\AA_K).$ Take an element $(P_v)_{v\in \Omega_K}\in M.$ By Proposition \ref{proposition the valuation of Brauer group on local points are fixed outside S and take two value on S} and $v_0\in S,$ we can take an element $P_{v_0}'\in V_2(K_{v_0})$ such that $\inv_{v_0} A(P_{v_0}')=\half.$	
		By Proposition \ref{proposition the valuation of Brauer group on local points are fixed outside S and take two value on S}, the sum $\sum_{v\in \Omega_K\backslash \{v_0\}}\inv_v(A(P_v))$ is $0$ or $\half$ in $\QQ/\ZZ.$ If it is $\half,$ then we replace $P_{v_0}$ by $P_{v_0}'.$ In this way, we get a new element $(P_v)_{v\in \Omega_K}\in M.$  And the sum $\sum_{v\in \Omega_K}\inv_v(A(P_v))=0$ in $\QQ/\ZZ.$ So $(P_v)_{v\in \Omega_K}\in V_2(\AA_K)^{\Br}\cap M.$ For $V_2(K)$ is dense in $V_2(\AA_K)^{\Br},$ the set $V_2(K)\cap M\neq \emptyset,$ which implies that $V_2$ satisfies weak approximation off $\{v_0\}.$ So $V_2$ satisfies weak approximation off $S'.$
		
		Suppose that $S\neq \emptyset$ and $S'\cap S= \emptyset.$ Take $v_0\in S,$ and let $U_{v_0}=\{P_{v_0}\in V_2(K_{v_0})| \inv_{v_0}(A(P_{v_0}))=\half\}.$ For $v\in S\backslash \{v_0\},$  let $U_v=\{P_v\in V_2(K_v)| \invap=0\}.$ For any $v\in S,$ by Remark \ref{remark union of the local invariant 0 and one-half}, the set $U_v$ is a nonempty  open subset of $V_2(K_v).$ Let $M=\prod_{v\in S}U_v\times \prod_{v\notin S }V_2(K_v).$ It is a nonempty  open subset of $V_2(\AA_K).$ For any $(P_v)_{v\in \Omega_K}\in M,$ by Proposition \ref{proposition the valuation of Brauer group on local points are fixed outside S and take two value on S} and the choice of $U_v,$ the sum $\sum_{v\in \Omega_K}\inv_v(A(P_v))=\half$ is nonzero in $\QQ/\ZZ.$ So $V_2(\AA_K)^{\Br}\cap M=\emptyset,$ which implies  $V_2(K)\cap M= \emptyset.$ Hence $V_2$ does not satisfy weak approximation off $S'.$
		
		The same argument applies to $V_{2L}.$ 
	\end{proof}

	Applying the construction method in Subsubsection \ref{subsubsection Choice of parameters for V2}, we have the following weak approximation properties for Ch\^atelet surfaces.
	
	\begin{corollary}\label{interesting result for Chatelet surface1}
		For any extension of number fields $L/K,$ and any finite nonempty subset $S \subset \Omega_K\backslash (\infty_K^c\cup 2_K)$ splitting completely in $L,$
		there exists a Ch\^atelet surface $V$ defined over $K$ such that $V(K)\neq \emptyset.$
		For any subfield $L'\subset L$ over $K,$ the Brauer group $\Br(V)/\Br(K)\cong\Br(V_{L'})/\Br(L')\cong \ZZ/2\ZZ.$ And the surface $V_{L'}$ has the following properties.
		\begin{itemize}
			\item For any finite subset $T'\subset \Omega_{L'}$ such that $T'\cap S_{L'}\neq \emptyset,$ the surface $V_{L'}$ satisfies weak approximation off $T'.$ 
			\item  For any finite subset $T'\subset \Omega_{L'}$ such that $T'\cap S_{L'}= \emptyset,$ the surface $V_{L'}$ does not satisfy weak approximation off $T'.$ In particular,  the surface $V_{L'}$ does not satisfy weak approximation. 
		\end{itemize}
	\end{corollary}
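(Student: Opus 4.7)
The plan is to take $V$ to be the surface $V_2$ constructed in Subsubsection \ref{subsubsection Choice of parameters for V2} with data chosen with respect to the extension $L/K$ and the subset $S$, and then deduce the properties for each intermediate field $L'$ by applying Propositions \ref{proposition the valuation of Brauer group on local points are fixed outside S and take two value on S} and \ref{proposition the valuation of Brauer group on local points are fixed outside S and take two value on S property} with $L'$ in place of $L$. Since $(0,b,0)$ is a $K$-rational point of $V_2^0$, the set $V(K)$ is nonempty as required, and in particular $V(L') \neq \emptyset$ for every intermediate field $L'$.

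The key observation is that the construction of $V_2$ in Subsubsection \ref{subsubsection Choice of parameters for V2} uses $L$ only through two splitting conditions: first, the requirement that $S$ splits completely in $L$, and second, the choice of auxiliary finite places $v_1, v_2 \in \Omega_K^f \setminus S''$ splitting completely in $L$ via \v{C}ebotarev's density theorem (Theorem \ref{theorem Chebotarev density theorem}). Since $K \subset L' \subset L$ and a place of $K$ is completely split in $L$ iff its decomposition group in $\Gal(L/K)$ is trivial, such a place automatically splits completely in $L'$. Hence the same data $(a,b,c,v_1,v_2)$ also satisfies all requirements of the construction with $L'$ in place of $L$, and $S$ still splits completely in $L'$. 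Therefore Proposition \ref{proposition the valuation of Brauer group on local points are fixed outside S and take two value on S} applied to $L'$ yields $\Br(V_{L'})/\Br(L') \cong \ZZ/2\ZZ$, generated by the pullback of $A$, together with the prescribed local invariants of $A$ at places of $L'$: two values $0$ and $\half$ are realized over each place in $S_{L'}$, while the local invariant is identically $0$ at every place outside $S_{L'}$.

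The two weak approximation statements then follow by applying Proposition \ref{proposition the valuation of Brauer group on local points are fixed outside S and take two value on S property} with $L'$ in place of $L$. When $T' \cap S_{L'} \neq \emptyset$, for any open subset $M \subset V(\AA_{L'})$ avoiding $T'$-components we use a chosen place $w_0 \in T' \cap S_{L'}$ to adjust the local value of $A$ at $w_0$ so that the global Brauer--Manin sum vanishes, producing an element of $M \cap V(\AA_{L'})^{\Br}$, which by the theorem of Colliot-Th\'el\`ene--Sansuc--Swinnerton-Dyer is approximated by an $L'$-rational point. Conversely, when $T' \cap S_{L'} = \emptyset$, we exhibit a nonempty open subset of $V(\AA_{L'})$ on which the Brauer--Manin sum is identically $\half$, obstructing weak approximation off $T'$.

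The argument is essentially a transfer of Propositions \ref{proposition the valuation of Brauer group on local points are fixed outside S and take two value on S} and \ref{proposition the valuation of Brauer group on local points are fixed outside S and take two value on S property} along the tower $K \subset L' \subset L$, so the only real point to check is the transitivity of the ``completely split'' condition, which is standard. No further computation is needed beyond what is already contained in Subsubsection \ref{subsubsection Choice of parameters for V2} and the two propositions invoked.
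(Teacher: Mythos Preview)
Your proposal is correct and follows essentially the same approach as the paper: take $V=V_2$ from Subsubsection \ref{subsubsection Choice of parameters for V2} and observe that the construction and Propositions \ref{proposition the valuation of Brauer group on local points are fixed outside S and take two value on S}--\ref{proposition the valuation of Brauer group on local points are fixed outside S and take two value on S property} apply verbatim with any intermediate $L'$ in place of $L$, since places of $K$ splitting completely in $L$ also split completely in $L'$. One minor remark: your phrasing via decomposition groups in $\Gal(L/K)$ tacitly assumes $L/K$ is Galois; for an arbitrary finite extension the inheritance of complete splitting by intermediate fields follows directly from the equality of completions $L_{v'}=K_v$ (or by passing to the Galois closure), but the conclusion is unchanged.
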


	\begin{proof}
		For the extension $L/K,$ and $S,$ let $V$ be the Ch\^atelet surface chosen as in Subsubsection \ref{subsubsection Choice of parameters for V2}.  Applying the same argument about the field $L$ to its subfield $L',$ the properties that we list, are just what we have explained in Proposition \ref{proposition the valuation of Brauer group on local points are fixed outside S and take two value on S} and Proposition \ref{proposition the valuation of Brauer group on local points are fixed outside S and take two value on S property}.
	\end{proof}

	Using the construction method in Subsubsection \ref{subsubsection Choice of parameters for V2}, we have the following example, which is a special case of Proposition \ref{proposition the valuation of Brauer group on local points are fixed outside S and take two value on S}. It will be used for further discussion.
	
	\begin{eg}\label{example1: construction of V_0}
		Let $K=\QQ,~L=\QQ(\sqrt{3}),$ and let $S=\{73\}.$ The prime numbers $11,23,73$ split completely in $L.$ Using the construction method in Subsubsection \ref{subsubsection Choice of parameters for V2}, we choose data: $S=S'=S''={73},~v_1=11,~v_2=23,~a=73,~b=1/73,~c=99$ and  $P(x)=(99x^2+1)(5428x^2/5329+1/5329).$ Then the Ch\^atelet surface given by $y^2-73z^2=P(x),$ has the properties of  Proposition \ref{proposition the valuation of Brauer group on local points are fixed outside S and take two value on S}, Proposition \ref{proposition the valuation of Brauer group on local points are fixed outside S and take two value on S property}.
	\end{eg}

	\subsection{Ch\^atelet surfaces related to the Hasse principle}

	Iskovskikh \cite{Is71} gave an example of the intersection of two quadratic hypersurfaces in $\PP^4_\QQ,$ which is a Ch\^atelet surface over
	$\QQ$ given by $y^2+z^2=(x^2-2)(-x^2+3).$ He showed that this Ch\^atelet surface is a counterexample to the Hasse principle. Similarly, Skorobogatov \cite[Pages 145-146]{Sk01} gave a family of Ch\^atelet surfaces with a parameter over $\QQ.$ He discussed the property of the Hasse principle for this family. Poonen \cite[Proposition 5.1]{Po09} generalized their arguments to any number field. Given an number field $K,$ he constructed a Ch\^atelet surface defined over $K,$ which is a counterexample to the Hasse principle. He used the \v{C}ebotarev's density theorem for some ray class fields to choose the parameters for the equation (\ref{equation}). The Ch\^atelet surface that he constructed, has the property of \cite[Lemma 5.5]{Po09} (a special situation of the following Proposition \ref{proposition the valuation of Brauer group on local points are fixed and on S nontrivial}: the case when $S=\{v_0\}$ for some place $v_0$ associated to some large prime element in $\Ocal_K$), which is the main ingredient in the proof of \cite[Proposition 5.1]{Po09}. In this subsection, we generalize them.
	
	Next, we will construct a Ch\^atelet surface of the third kind mentioned in  Subsection \ref{introduction Main results for Ch\^atelet surfaces}.

	\subsubsection{Choice of parameters for the equation (\ref{equation})}\label{subsubsection Choice of parameters for V3}
	Given an extension of number fields $L/K,$
	and a finite subset $S\subset \Omega_K\backslash (\infty_K^c\cup 2_K),$ we choose an element $a\in \Ocal_K\backslash K^2$ as in Subsubsection \ref{subsection choose an element a for S}.
	
	We will choose an element $b\in K^\times$ with respect to the chosen $a$ in the following way.
	
	Let $S'=\{v\in \infty_K^r | \tau_v(a)<0\}\cup \{v\in \Omega_K^f\backslash 2_K | v(a){\rm ~is ~odd} \}$ be as in Remark \ref{remark choose an element a for S remark 1}, then $S'\supset S$ is a finite set. If $v \in S\backslash \infty_K,$ then $v(a)$ is odd. Then by Lemma \ref{lemma openness for hilbert symbal with odd valuation element}, the set  $\{b\in \Ocal_{K_v}^\times|(a,b)_v=-1\}$ is a nonempty open subset of $\Ocal_{K_v}.$ If $v \in S'\backslash (S\cup \infty_K),$ then by Lemma \ref{lemma openness for hilbert symbal 1}, the set  $\{b\in \Ocal_{K_v}^\times|(a,b)_v=1\}$ is a nonempty open subset of $\Ocal_{K_v}.$ By Lemma \ref{lemma strong approximation for A^1}, we can choose a nonzero element $b\in \Ocal_K[1/2]$ satisfying the following conditions:
	\begin{itemize}
		\item $\tau_v(b)<0$ for all $v\in S\cap \infty_K,$
		\item $\tau_v(b)>0$ for all $v\in (S'\backslash S)\cap \infty_K,$
		\item $(a,b)_v=-1$ and $v(b)=0$ for all $v\in S\backslash \infty_K,$
		\item $(a,b)_v=1$ and $v(b)=0$  for all $v\in S'\backslash (S\cup \infty_K).$
	\end{itemize}	
	
	We will choose an element $c\in K^\times$ with respect to the chosen $a,b$ in the following way.
	
	Let $S''=\{v\in \Omega_K^f\backslash  2_K | v(b)\neq 0 \},$ then $S''$ is a finite set and $S'\cap S''=\emptyset.$ By Theorem \ref{theorem Chebotarev density theorem}, we can take two different finite places $v_1,v_2\in \Omega_K^f\backslash (S'\cup S''\cup 2_K )$ splitting completely in $L.$ If $v\in (S'\backslash \infty_K)\cup \{v_1,v_2\},$ then $b\in\Ocal_{K_v}^\times.$ In this case, by Lemma \ref{lemma openness for v(x)=n}, the sets $\{c\in K_v|v(bc+1)=v(a)+2\},$ $\{c\in K_v|v(c)=1\}$ and $\{c\in K_v|v(bc+1)=1\}$ are nonempty open subsets of $\Ocal_{K_v}.$
	If $v\in S'',$ by Lemma \ref{lemma openness for hilbert symbal 1}, the set $\{c\in \Ocal_{K_v}^\times|(a,c)_v=1\}$ is a nonempty open subset of $\Ocal_{K_v}.$
	Also by Lemma \ref{lemma strong approximation for A^1}, we can choose a nonzero element $c\in\Ocal_K[1/2]$ satisfying the following conditions:
	\begin{itemize}
		\item $0<\tau_v(c)<-1/\tau_v(b)$ for all $v\in S\cap \infty_K,$
		\item $\tau_v(bc+1)<0$ for all $v\in (S'\backslash S)\cap \infty_K,$
		\item $v(bc+1)=v(a)+2$ for all $v\in S'\backslash \infty_K,$
		\item $(a,c)_v=1$ for all $v\in S'',$
		\item $v_1(c)=1$ and $v_2(bc+1)=1$ for the chosen $v_1,v_2$ above.
	\end{itemize}
	
	
	Let $P(x)=(x^2-c)(bx^2-bc-1),$ and let $V_3$ be the Ch\^atelet surface given by $y^2-az^2=(x^2-c)(bx^2-bc-1).$ 
	
	\begin{proposition}\label{proposition the valuation of Brauer group on local points are fixed and on S nontrivial}
		For any extension of number fields $L/K,$ and any finite subset $S \subset \Omega_K\backslash (\infty_K^c\cup 2_K)$ splitting completely in $L,$ there exists a Ch\^atelet surface $V_3$ defined over $K,$ which has the following properties.
		\begin{itemize}		
			\item The Brauer group $\Br(V_3)/\Br(K)\cong\Br(V_{3L})/\Br(L)\cong \ZZ/2\ZZ,$ is generated by an element $A\in \Br(V_3).$ The subset $V_3(\AA_K)\subset V_3(\AA_L)$ is nonempty.	
			\item For any $v\in \Omega_K,$ and any $P_v\in V_3(K_v),$ 
			\begin{equation*}
				\inv_v(A(P_v))=\begin{cases}
					0& if\quad v\notin S,\\
					1/2 & if \quad v\in S.
				\end{cases}
			\end{equation*}
			\item For any $v'\in \Omega_L,$ and any $P_{v'}\in V_3(L_{v'}),$ 
			\begin{equation*}
				\inv_{v'}(A(P_{v'}))=\begin{cases}
					0& if\quad v'\notin S_L,\\
					1/2 & if \quad v'\in S_L.
				\end{cases}
			\end{equation*}
		\end{itemize}
	\end{proposition}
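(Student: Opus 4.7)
My plan is to verify that the Ch\^atelet surface $V_3$ constructed in Subsubsection \ref{subsubsection Choice of parameters for V3} satisfies the three claimed properties. The strategy parallels that of Proposition \ref{proposition the valuation of Brauer group on local points are fixed outside S and take two value on S}: first compute the Brauer group via the factorisation of $P(x)=(x^2-c)(bx^2-bc-1),$ then verify the local invariant of a distinguished generator by a careful case analysis over the places of $K$ (and then $L$). The constants $a,b,c$ are chosen so that this invariant is not just well-defined at each place but is \emph{forced} to one prescribed value.

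For the Brauer group, I would check that $P(x)$ is separable and factors as a product of two irreducible quadratics over both $K$ and $L.$ The Eisenstein condition $v_1(c)=1$ makes $x^2-c$ irreducible over $K_{v_1},$ and since $v_1(a)$ is even the quadratic extension it defines differs from $K_{v_1}(\sqrt{a}).$ The conditions $v_2(bc+1)=1$ and $v_2(b)=0$ play the symmetric role for $bx^2-bc-1.$ Because $v_1,v_2$ split completely in $L$ and $a\notin L^{\times 2}$ by Remark \ref{remark choose an element a for S 2}, the same statements hold over $L.$ Then \cite[Propositions 7.1.1 and 7.1.2]{Sk01} yield $\Br(V_3)/\Br(K)\cong\Br(V_{3L})/\Br(L)\cong \ZZ/2\ZZ,$ generated by $A=(a,x^2-c)=(a,bx^2-bc-1).$

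For the local invariants over $K,$ Remark \ref{remark the implicit function thm and local constant Brauer group} reduces the problem at each $v$ to simultaneously exhibiting a $K_v$-point and computing $(a,x^2-c)_v$ on $V_3^0(K_v).$ I partition $\Omega_K$ into the regions cut out by the construction: on $(\infty_K\setminus S')\cup 2_K,$ $a$ is a local square and Remark \ref{remark birational to PP^2} produces a point with invariant $0$; on $(S'\setminus S)\cap\infty_K$ and $S'\setminus(S\cup\infty_K),$ the sign and valuation conditions on $b$ and $bc+1$ together with Lemma \ref{lemma Hensel lemma for Hilbert symbal} pin the invariant to $0$; on $S''$ and $\Omega_K^f\setminus(S'\cup S''\cup 2_K),$ $v(a)$ is even and Lemma \ref{lemma hilbert symbal lifting for odd prime} applies; on $S\cap\infty_K,$ the sign conditions on $a,b,c$ force $(a,x^2-c)_v=-1$ at every real point; and on $S\setminus\infty_K,$ the conditions $(a,b)_v=-1$ and $v(bc+1)=v(a)+2$ combine. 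In each region both factors of $P(x)$ must be checked separately, using the identity $(a,x^2-c)_v=(a,bx^2-bc-1)_v$ that holds on every actual point of $V_3^0(K_v).$

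The passage to $L$ is then immediate: for $v'\in S_L,$ the restriction $v\in S$ splits completely, so $L_{v'}=K_v$ and the computation over $K$ applies verbatim; for $v'\in\Omega_L\setminus S_L,$ the same casework runs over $L_{v'}$ with the same $a,b,c,$ since $a\notin L^{\times 2}.$ The main obstacle will be the decisive case $v\in S\setminus\infty_K$: I need $(a,P(x))_v=1$ for every $x$ extending to a point (so $V_3^0(K_v)\neq\emptyset$) while simultaneously $(a,x^2-c)_v=-1$ at every such $x.$ This requires splitting into subcases according as $2v(x)<v(c),$ $2v(x)=v(c),$ or $2v(x)>v(c)$ and applying Lemma \ref{lemma Hensel lemma for Hilbert symbal} in each. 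The precise calibration $v(bc+1)=v(a)+2,$ rather than merely $v(bc+1)$ large, is what makes both factors of $P(x)$ contribute compatibly and locks the invariant to $1/2$ uniformly on $V_3(K_v)$ rather than only at a witness point.
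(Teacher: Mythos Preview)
Your proposal is correct and follows essentially the same approach as the paper's proof: the same use of the Eisenstein conditions at $v_1,v_2$ for the Brauer group, the same partition of $\Omega_K$ into the regions determined by $S,S',S'',$ and $2_K$, and the same Hilbert-symbol computations via Lemmas \ref{lemma hilbert symbal lifting for odd prime} and \ref{lemma Hensel lemma for Hilbert symbal}. The paper separates the verification of $V_3(\AA_K)\neq\emptyset$ from the invariant computation (exhibiting an explicit witness $x_0=\pi_v$ at $v\in S\setminus\infty_K$) rather than merging them, and at $v\in S\setminus\infty_K$ splits only into $v(x)\le 0$ versus $v(x)>0$ (since $v(c)=0$ your three cases collapse to this), but these are purely organizational differences.
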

	
	\begin{proof}
		
		For the extension $L/K,$ and the finite set $S,$ we will check that the Ch\^atelet surface $V_3$ chosen as in Subsubsection \ref{subsubsection Choice of parameters for V3}, has the properties.
		
		Firstly, we need to check that $V_3$ has an $\AA_K$-adelic point.
		
		Suppose that $v\in (\infty_K\backslash S')\cup 2_K.$ Then $a\in K_v^{\times 2}.$ By Remark \ref{remark birational to PP^2}, the surface $V_3$ admits a $K_v$-point.\\	
		Suppose that $v\in (S'\backslash S)\cap \infty_K.$ Let $x_0=0.$ For $\tau_v(b)>0$ and $\tau_v(bc+1)<0,$ we have $\tau_v(c)<0$ and $\tau_v((x_0^2-c)(bx_0^2-bc-1))=\tau_v(c(bc+1))>0,$ which implies that $V_3^0$ admits a $K_v$-point with $x=0.$\\
		Suppose that $v\in S'\backslash (S\cup \infty_K).$  Take $x_0\in K_v$ such that the valuation $v(x_0)<0.$ For $b\in \Ocal_{K_v}^\times$ and $c\in\Ocal_K[1/2],$ by Lemma \ref{lemma Hensel lemma for Hilbert symbal}, we have
		$(a,x_0^2-c)_v=(a,x_0^2)_v=1$ and $(a,bx_0^2-bc-1)_v=(a,bx_0^2)_v=(a,b)_v.$ By the choice of $b,$ we have $(a,b)_v=1.$ Hence $(a,(x_0^2-c)(bx_0^2-bc-1))_v=(a,b)_v=1,$ which implies that $V_3^0$ admits a $K_v$-point with $x=x_0.$\\
		Suppose that $v\in S''.$ By the choice of $a,b,c,$ we have $(a,c)_v=1,$  $v(a)$ even, and $bc+1\in \Ocal_{K_v}^\times.$ By Lemma \ref{lemma hilbert symbal lifting for odd prime}, we have $(a,bc+1)_v=1.$ Let $x_0=0.$ Then $(a,(x_0^2-c)(bx_0^2-bc-1))_v=(a,c(bc+1))_v=(a,c)_v(a,bc+1)_v=1,$ which implies that $V_3^0$ admits a $K_v$-point with $x=0.$\\
		Suppose that $v\in \Omega_K^f\backslash  (S'\cup S''\cup 2_K ).$ Then $v(b)=0.$  Take $x_0\in K_v$ such that the valuation $v(x_0)<0.$ For $b\in \Ocal_{K_v}^\times$ and $c\in\Ocal_K[1/2],$ by Lemma \ref{lemma Hensel lemma for Hilbert symbal}, we have	
		$(a,x_0^2-c)_v=(a,x_0^2)_v=1$ and $(a,bx_0^2-bc-1)_v=(a,bx_0^2)_v=(a,b)_v.$ For $v(a)$ and $v(b)$ are even, by Lemma \ref{lemma hilbert symbal lifting for odd prime}, we have $(a,b)_v=1.$
		So $(a,(x_0^2-c)(bx_0^2-bc-1))_v=(a,b)_v=1,$ which implies that $V_3^0$ admits a $K_v$-point with $x=x_0.$\\
		Suppose that $v\in S\cap\infty_K.$ Let $x_0=0.$ Then by the choice of $a,b,c,$ we have $\tau_v(a)<0,$ $\tau_v(c)>0$ and $\tau_v(bc+1)>0.$ So $(a,(x_0^2-c)(bx_0^2-bc-1))_v=(a,c(bc+1))_v=1,$ which implies that $V_3^0$ admits a $K_v$-point with $x=0.$\\
		Suppose that $v\in S\backslash\infty_K.$ Choose a prime element $\pi_v$ and take $x_0=\pi_v.$ By the choice of $a,b,c,$ we have $b,c\in \Ocal_{K_v}^\times,~v(bx_0^2)=2,$ and $v(bc+1)=v(a)+2\geq 3.$ By Lemma \ref{lemma Hensel lemma for Hilbert symbal}, we have
		$(a,x_0^2-c)_v=(a,-c)_v$ and $(a,bx_0^2-bc-1)_v=(a,bx_0^2)_v.$ By Hensel's lemma, we have $-bc=1-(bc+1)\in K_v^{\times 2}.$
		So $(a,(x_0^2-c)(bx_0^2-bc-1))_v=(a,-bcx_0^2)_v=1,$ which implies that $V_3^0$ admits a $K_v$-point with $x=\pi_v.$
		
		Secondly, we need to prove the statement about the Brauer group, and find the element $A$ in this proposition.
		
		By the choice of the places $v_1,$ the polynomial $x^2-c$ is an Eisenstein polynomial, so it is irreducible over $K_{v_1}.$ Since $v_1(a)$ is even, we have $K(\sqrt{a})K_{v_1}\ncong K_{v_1}[x]/(x^2-c).$ So $K(\sqrt{a})\ncong K[x]/(x^2-c).$ The same argument holds for the place $v_2$ and polynomial $bx^2-bc-1.$
		For all places of $S$ split completely in $L,$ then by Remark \ref{remark choose an element a for S 2}, we have $a\in \Ocal_K\backslash L^2.$ 
		By the splitting condition of $v_1, v_2,$ we have $L(\sqrt{a})\ncong L[x]/(x^2-c)$ and $L(\sqrt{a})\ncong L[x]/(bx^2-bc-1).$
		So $P(x)=(x^2-c)(bx^2-bc-1)$ is separable and a product of two degree-2 irreducible factors over $K$ and $L.$ According to \cite[Proposition 7.1.1]{Sk01}, the Brauer group $\Br(V_3)/\Br(K)\cong\Br(V_{3L})/\Br(L)\cong \ZZ/2\ZZ.$ Furthermore, by Proposition 7.1.2 in loc. cit, we take the quaternion algebra $A=(a,x^2-c)\in \Br(V_3)$ as a generator element of this group. Then we have the equality $A=(a,x^2-c)=(a,bx^2-bc-1)$ in $\Br(V_3).$ 
		
		Thirdly, We need to compute the evaluation of $A$ on $V_3(K_v)$ for all $v\in \Omega_K.$ 
		
		By Remark \ref{remark the implicit function thm and local constant Brauer group}, it suffices to compute the local invariant $\invap$ for all $P_v\in V_3^0(K_v)$ and all $v\in \Omega_K.$
		
		Suppose that $v\in (\infty_K\backslash S')\cup 2_K.$  Then $a\in K_v^{\times 2},$ so $\invap=0$ for all $P_v\in V_3(K_v).$\\
		Suppose that $v\in (S'\backslash S)\cap \infty_K.$  By the choice of $b,c,$ we have $\tau_v(b)>0$ and $\tau_v(bc+1)<0.$ So, for any $x\in K,$ we have $(a,bx^2-bc-1)_v=1.$ Hence $\invap=0$ for all $P_v\in V_3^0(K_v).$\\
		Suppose that $v\in S'\backslash (S\cup \infty_K).$ By the choice of $b,$ we have $(a,b)_v=1.$ Take an arbitrary $P_v\in V_3^0(K_v).$
		If $v(x)<0$ at $P_v,$ by Lemma \ref{lemma Hensel lemma for Hilbert symbal}, we have  $(a,x^2-c)_v=(a,x^2)_v=1.$ 
		If $v(x)> 0$ at $P_v,$ since $b,c\in \Ocal_{K_v}^\times$  and $v(bc+1)=v(a)+2\geq 3,$ by Lemma \ref{lemma Hensel lemma for Hilbert symbal}, we have $(a,x^2-c)_v=(a,-c)_v.$ By Hensel's lemma, we have $-bc=1-(bc+1)\in K_v^{\times 2}.$
		So $(a,x^2-c)_v=(a,-c)_v=(a,-bc)_v=1.$
		If $v(x)=0$ at $P_v,$ since $b\in \Ocal_{K_v}^\times$ and $v(bc+1)=v(a)+2\geq 3,$ by Lemma \ref{lemma Hensel lemma for Hilbert symbal}, we have $(a,bx^2-bc-1)_v=(a,bx^2)_v=1.$ So $\invap=0.$\\ 
		Suppose that $v\in \Omega_K^f\backslash ( S'\cup 2_K ).$ Take an arbitrary $P_v\in V_3^0(K_v).$ If $\invap=1/2,$ then $(a,bx^2-bc-1)_v=(a,x^2-c)_v=-1$ at $P_v.$ For $v(a)$ is even, by Lemma \ref{lemma hilbert symbal lifting for odd prime}, the last equality implies that $v(x^2-c)$ is odd, so it is positive. So $v(bx^2-bc-1)=0.$ By Lemma \ref{lemma hilbert symbal lifting for odd prime}, we have $(a,bx^2-bc-1)_v=1,$ which is a contradiction. So $\invap=0.$
		
		Suppose that $v\in S\cap\infty_K.$  Take an arbitrary $P_v\in V_3^0(K_v).$ If $A(P_v)=0,$ then $(a,bx^2-bc-1)_v=(a,x^2-c)_v=1$ at $P_v.$ The last equality implies that $\tau_v(x^2-c)>0.$  By the choice of $b,$ we have $\tau_v(b)<0,$ so $\tau_v(bx^2-bc-1)<0,$ which contradicts $(a,bx^2-bc-1)_v=1.$ So $\invap=\half.$\\
		Suppose that $v\in S\backslash\infty_K.$ By the choice of $b,$ we have $(a,b)_v=-1.$ Take an arbitrary $P_v\in V_3^0(K_v).$ If $v(x)\leq 0$ at $P_v,$ for $b\in \Ocal_{K_v}^\times$ and $v(bc+1)=v(a)+2\geq 3,$ by Lemma \ref{lemma Hensel lemma for Hilbert symbal}, we have $(a,bx^2-bc-1)_v=(a,bx^2)_v=-1.$ 
		If $v(x)> 0$ at $P_v,$ for $b,c\in \Ocal_{K_v}^\times$ and $v(bc+1)=v(a)+2\geq 3,$ by Lemma \ref{lemma Hensel lemma for Hilbert symbal}, we have $(a,x^2-c)_v=(a,-c)_v.$
		By Hensel's lemma, we have $-bc=1-(bc+1)\in K_v^{\times 2}.$
		So	$(a,x^2-c)_v=(a,-c)_v=-(a,-bc)_v=-1.$ So $\invap=\half.$	
		
		Finally, we need to compute the evaluation of $A$ on $V_3(L_{v'})$ for all $v'\in \Omega_L.$
		
		Suppose that $v' \in S_L.$ Let $v\in \Omega_K$ be the restriction of $v'$ on $K.$ By the assumption that $v$ is split completely in $L,$ we have $K_v=L_{v'}.$ So $V_3(K_v)=V_3(L_{v'}).$ Then for any $P_{v'}\in V_3(L_{v'}),$ denote $P_{v'}$ in $V_3(K_v)$ by $P_v.$ Then by the argument already shown,  the local invariant $\inuap=\invap=\half.$ \\
		Suppose that $v'\in \Omega_L\backslash S_L.$ This local computation is the same as the case $v\in \Omega_K\backslash S.$
	\end{proof}

	\begin{remark}\label{remark the valuation of Brauer group on local points are fixed and on S nontrivial}
		If  the surface $V_3$ has a $K$-rational point $Q,$ then by the global reciprocity law, the sum $\sum_{v\in \Omega_K}\inv_v(A(Q))=0$ in $\QQ/\ZZ.$	
		If the number $\sharp S$ is odd, then from Proposition \ref{proposition the valuation of Brauer group on local points are fixed and on S nontrivial} that we get, this sum is ${\sharp S}/2,$ which is nonzero in $\QQ/\ZZ.$ So, in this case, the surface $V_3$ has no $K$-rational point, which implies that the surface $V_3$ is a counterexample to the Hasse principle.	
		If the number $\sharp S$ is even, then for any $(P_v)_{v\in\Omega_K}\in V_3(\AA_K),$ by Proposition \ref{proposition the valuation of Brauer group on local points are fixed and on S nontrivial}, the sum $\sum_{v\in \Omega_K}\inv_v(A(P_v))={\sharp S}/2=0$ in $\QQ/\ZZ.$ For $\Br(V_3)/\Br(K)$ is generated by the element $A,$ we have $V_3(\AA_K)^{\Br}= V_3(\AA_K)\neq \emptyset.$ According to \cite[Theorem B]{CTSSD87a,CTSSD87b}, the Brauer-Manin obstruction to the Hasse principle and weak approximation is the only one for Ch\^atelet surfaces. So, in this case, the set $V_3(K)\neq \emptyset,$ and it is dense in $V_3(\AA_K)^{\Br}= V_3(\AA_K),$ i.e. the surface $V_3$ has a $K$-rational point and satisfies weak approximation.  In particular, if the number $\sharp S=0,$ i.e. $S=\emptyset,$ though the Brauer group $\Br(V_3)/\Br(K)$ is nontrivial, it gives no obstruction to week approximation for $V_3.$
	\end{remark}

	Applying the construction method in Subsubsection \ref{subsubsection Choice of parameters for V3}, we can relate the properties in Proposition \ref{proposition the valuation of Brauer group on local points are fixed outside S and take two value on S} to the Hasse principle and weak approximation. 
	
	\begin{corollary}\label{interesting result for Chatelet surface2}
		For any extension of number fields $L/K,$ there exists a Ch\^atelet surface $V$ defined over $K$ such that $V(\AA_K)\neq \emptyset.$
		For any subfield $L'\subset L$ over $K,$ the Brauer group $\Br(V)/\Br(K)\cong\Br(V_{L'})/\Br(L')\cong \ZZ/2\ZZ.$ And the surface $V_{L'}$ has the following properties.
		\begin{itemize}
			\item If the degree $[L':K]$ is odd, then the surface $V_{L'}$ is a counterexample to the Hasse principle. In particular, the surface $V$ is a counterexample to the Hasse principle.
			\item If the degree $[L':K]$ is even, then the surface $V_{L'}$ satisfies weak approximation. In particular, in this case, the set $V(L')\neq \emptyset.$
		\end{itemize}
	\end{corollary}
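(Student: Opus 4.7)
The plan is to apply Proposition~\ref{proposition the valuation of Brauer group on local points are fixed and on S nontrivial} with a one-element set $S = \{v_0\}$, and then exploit the splitting behavior of $v_0$ in intermediate fields to obtain the parity condition governing the Hasse principle versus weak approximation.

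First I would invoke \v{C}ebotarev's density theorem (Theorem~\ref{theorem Chebotarev density theorem}) to pick a finite place $v_0 \in \Omega_K \setminus (\infty_K^c \cup 2_K)$ that splits completely in $L$, and set $S = \{v_0\}$. I then apply the construction in Subsubsection~\ref{subsubsection Choice of parameters for V3} to this pair $(L/K, S)$, obtaining a Ch\^atelet surface $V := V_3$ together with the generator $A \in \Br(V)$ of $\Br(V)/\Br(K) \cong \ZZ/2\ZZ$. The nonemptiness $V(\AA_K) \neq \emptyset$ and the statement on the Brauer group follow directly from Proposition~\ref{proposition the valuation of Brauer group on local points are fixed and on S nontrivial}.

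For any intermediate subfield $L' \subset L$ over $K$, I would observe that $v_0$ splits completely in $L'$ as well (being a sub-extension of an extension in which $v_0$ splits completely), so the hypothesis of Proposition~\ref{proposition the valuation of Brauer group on local points are fixed and on S nontrivial} holds with $L'$ in place of $L$; in particular $\Br(V_{L'})/\Br(L') \cong \ZZ/2\ZZ$ is still generated by (the image of) $A$, and the local invariant $\inv_{v'}(A(P_{v'}))$ equals $1/2$ for $v' \in S_{L'}$ and $0$ otherwise. Crucially, since $v_0$ splits completely in $L'$, the cardinality of $S_{L'}$ equals the degree $[L':K]$.

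The final step is to copy the parity argument of Remark~\ref{remark the valuation of Brauer group on local points are fixed and on S nontrivial} verbatim with $L'$ in place of $K$ and $S_{L'}$ in place of $S$. If $[L':K]$ is odd, then $\sharp S_{L'}$ is odd; any hypothetical $L'$-rational point $Q$ would force $\sum_{v' \in \Omega_{L'}} \inv_{v'}(A(Q)) = \sharp S_{L'}/2 \neq 0$ in $\QQ/\ZZ$, contradicting global reciprocity, so $V_{L'}$ is a counterexample to the Hasse principle. If $[L':K]$ is even, then $\sharp S_{L'}$ is even, so the Brauer-Manin sum vanishes for every adelic point and $V_{L'}(\AA_{L'})^{\Br} = V_{L'}(\AA_{L'})$, whence by the theorem of Colliot-Th\'el\`ene--Sansuc--Swinnerton-Dyer (\cite{CTSSD87a,CTSSD87b}) $V_{L'}(L')$ is dense in $V_{L'}(\AA_{L'})$, i.e.\ $V_{L'}$ satisfies weak approximation. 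There is no real obstacle here beyond verifying that the construction of Subsubsection~\ref{subsubsection Choice of parameters for V3} is compatible with passing from $L$ to the subfield $L'$, which is immediate since every splitting/non-square hypothesis imposed on $L$ is inherited by the smaller field $L'$.
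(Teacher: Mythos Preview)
Your proposal is correct and follows essentially the same approach as the paper: pick a single place $v_0$ splitting completely in $L$ via \v{C}ebotarev, apply the construction of Subsubsection~\ref{subsubsection Choice of parameters for V3} with $S=\{v_0\}$, observe that $v_0$ splits completely in every intermediate $L'$ so that $\sharp S_{L'}=[L':K]$, and then invoke the parity argument of Remark~\ref{remark the valuation of Brauer group on local points are fixed and on S nontrivial}. The paper's proof is organized identically, with the same references and the same reduction to the subfield case.
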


	\begin{proof}
		By Theorem \ref{theorem Chebotarev density theorem}, we can take a place $v_0\in \Omega_K\backslash (\infty_K^c\cup 2_K)$ splitting completely in $L.$ Let $S=\{v_0\}.$ Using the construction method in Subsubsection \ref{subsubsection Choice of parameters for V3}, there exists a Ch\^atelet surface $V$ defined over $K$ having the properties of Proposition \ref{proposition the valuation of Brauer group on local points are fixed and on S nontrivial}. By the same argument as in the proof of Proposition \ref{proposition the valuation of Brauer group on local points are fixed and on S nontrivial}, we have $\Br(V)/\Br(K)\cong\Br(V_{L'})/\Br(L')\cong \ZZ/2\ZZ.$ For $v_0$ splits completely in $L,$ it also does in $L'.$ For $\sharp S$ is odd, if $[L':K]$ is odd, then $\sharp S_{L'}$ is odd; if $[L':K]$ is even, then $\sharp S_{L'}$ is even. Applying the same argument about the field $L$ to its subfield $L',$ the properties that we list, are just what we have explained in Remark \ref{remark the valuation of Brauer group on local points are fixed and on S nontrivial}.
	\end{proof}
	
	\begin{remark}
		Though the Brauer group $\Br(V)/\Br(K)\cong\Br(V_{L'})/\Br(L')\cong \ZZ/2\ZZ$ in Corollary \ref{interesting result for Chatelet surface2}, is nontrivial, it gives an obstruction to the Hasse principle for $V,$ also $V_{L'}$ if $[L':K]$ is odd; but no longer gives an obstruction to week approximation for $V_{L'}$ if $[L':K]$ is even.
	\end{remark}

	Using the construction method in Subsubsection \ref{subsubsection Choice of parameters for V3}, we have the following example, which is a special case of Proposition \ref{proposition the valuation of Brauer group on local points are fixed and on S nontrivial}. It will be used for further discussion.
	
	\begin{eg}\label{example2: construction of V_0}
		Let $K=\QQ$ and $L=\QQ(\zeta_7+\zeta_7^{-1})$ be as in Example \ref{example2: construction of V_infty}, and let $S=\{13\}.$ For $13^2\equiv 1\mod 7,41^2\equiv 1\mod 7$ and $43\equiv 1\mod 7,$ the places $13,41,43$ split completely in $L.$ Using the construction method in Subsubsection \ref{subsubsection Choice of parameters for V3}, we choose data: $S=\{13\},~S'=\{13,29\},~S''=\{5\},~v_1=43,~v_2=41,~a=377,~b=5,~c=878755181$ and  $P(x)=(x^2-878755181)(5x^2-4393775906).$ Then the Ch\^atelet surface given by $y^2-377z^2=P(x),$ has the properties of  Proposition \ref{proposition the valuation of Brauer group on local points are fixed and on S nontrivial}. 
	\end{eg}

	\section{Stoll's conjecture for curves}
	Whether all failures of the Hasse principle of smooth, projective, and geometrically connected curves defined over a number field, are explained by the Brauer-Manin obstruction, was considered by Skorobogatov \cite[Chapter 6.3]{Sk01} and Scharaschkin \cite{Sc99} independently. Furthermore, Stoll \cite[Conjecture 9.1]{St07} made the following conjecture. 
	
	Given a curve $C$ defined over a number field $K,$ let $C(\AA_K)_\bullet=\prod_{v\in \infty_K}\{$connected components of $C(K_v)\}\times C(\AA_K^f).$ The product topology of $\prod_{v\in \infty_K}\{$connected components of $C(K_v)\}$
	with discrete topology and $C(\AA_K^f)$ with adelic topology, gives a topology for $C(\AA_K)_\bullet.$  For any $A\in\Br(C),$ and any $v\in\infty_K,$ the evaluation of $A$ on each connected component of $C(K_v)$ is constant. So, the notation $C(\AA_K)_\bullet^{\Br}$ makes sense.

	\begin{conjecture}\cite[Conjecture 9.1]{St07}\label{conjecture Stoll}
		For any smooth, projective, and geometrically connected curve $C$ defined over a number field $K,$  the set $C(K)$ is dense in $C(\AA_K)_\bullet^{\Br}.$ In particular, the curve $C$ satisfies weak approximation with Brauer-Manin obstruction off $\infty_K.$
	\end{conjecture}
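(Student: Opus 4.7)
The statement is Stoll's conjecture, which remains open in full generality; any realistic plan amounts to outlining the descent strategy that succeeds in known special cases. My plan is to start with an adelic point $(P_v) \in C(\AA_K)_\bullet^{\Br}$, fix a basic open neighborhood $U$ around it in $C(\AA_K)_\bullet$, and attempt to produce a $K$-rational point inside $U$. Since $C(\AA_K)_\bullet^{\Br} \neq \emptyset$ by assumption, the first standard step is to use this data to fix a $K$-rational divisor class of degree $1$ on $C$ (after some bookkeeping on degrees), so that one obtains an Abel-Jacobi embedding $j \colon C \hookrightarrow J$ into the Jacobian defined over $K$.

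Next, functoriality of the Brauer-Manin pairing pushes $(P_v)$ forward to a point of $J(\AA_K)^{\Br}$. Assuming $\Sha(J)$ is finite, the standard Cassels-Tate style identification (as in Scharaschkin \cite{Sc99}, and compatible with \cite[Proposition 6.2.4]{Sk01}) yields that $J(\AA_K)^{\Br}$ coincides with the closure $\overline{J(K)}$ inside the adelic group, provided one passes to connected components at the archimedean places — which is exactly the modification built into $C(\AA_K)_\bullet$ rather than $C(\AA_K)$. I would then intersect with $j(C(\AA_K)_\bullet)$ to confine the image of $(P_v)$ to $\overline{J(K)} \cap j(C(\AA_K)_\bullet)$ and argue density of $j(C(K))$ in this intersection via a Chabauty-Coleman-style analytic argument, which in the regime where the Mordell-Weil rank of $J$ is less than the genus of $C$ gives finiteness of $C(K_v) \cap \overline{J(K)}$ inside $J(K_v)$ and hence the desired approximation.

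The main obstacle, and precisely the reason Conjecture \ref{conjecture Stoll} is not a theorem, is twofold. First, the finiteness of $\Sha(J)$ is not known in general, so the Cassels-Tate identification of $J(\AA_K)^{\Br}$ with $\overline{J(K)}$ is itself conjectural. Second, outside the Chabauty regime — that is, when the Mordell-Weil rank of $J$ is at least the genus of $C$ — there is no known analytic or algebraic method to control $C(K)$ inside $j^{-1}(\overline{J(K)})$, and producing rational points of $C$ in arbitrary $v$-adic neighborhoods cut out by $\overline{J(K)} \cap j(C(\AA_K)_\bullet)$ is wide open. Each of these two gaps is a deep unresolved problem, which is why the paper treats the conjecture as an assumption rather than attempting a proof; unconditional progress would demand a substantial advance on either side.
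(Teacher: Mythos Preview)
Your assessment is accurate, and in fact there is nothing to compare: the statement is a \emph{conjecture}, not a theorem, and the paper does not provide a proof. It is cited verbatim from \cite{St07} and used throughout as a standing hypothesis (e.g., in Theorems \ref{theorem main result: non-invariance of weak approximation with BMO}, \ref{theorem main result: non-invariance of the Hasse principle with BMO for odd degree}, \ref{theorem main result: non-invariance of the Hasse principle with BMO for exist real place}). Your outline of the Scharaschkin--Stoll strategy via the Abel--Jacobi embedding, finiteness of $\Sha(J)$, and Chabauty-type arguments, together with your identification of the two genuine obstacles, is a fair summary of why the conjecture remains open, and is consistent with the remarks following Conjecture~\ref{conjecture Stoll} in the paper.
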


	\begin{remark}
		If Conjecture \ref{conjecture Stoll} holds for a given curve, which is a counterexample to the Hasse principle, then its failure of the Hasse principle is explained by the Brauer-Manin obstruction.  For an elliptic curve defined over $K,$ if its Tate-Shafarevich group is finite, then by the dual sequence of Cassels-Tate, Conjecture \ref{conjecture Stoll} holds for this elliptic curve. 
		With the effort of Kolyvagin \cite{Ko90,Ko91}, Gross and Zagier \cite{GZ86}, and many others, for an elliptic curve defined over $\QQ,$ if its analytic rank equals zero or one, then its Mordell-Weil rank equals its analytic rank, and its Tate-Shafarevich group is finite. So, Conjecture \ref{conjecture Stoll} holds for this elliptic curve.
	\end{remark}

	
	\begin{definition}\label{definition curve of type}
		Given a nontrivial extension of number fields $L/K,$ let $C$ be a smooth, projective, and geometrically connected curve defined over $K.$ We say that a triple $(C,K,L)$  is of type $I$ if $C(K)$ and $C(L)$ are finite nonempty sets, $C(K)\neq C(L)$ and Stoll's Conjecture \ref{conjecture Stoll} holds for the curve $C.$ We say that a triple $(C,K,L)$ is of type $II$ if $(C,K,L)$ is of type $I,$ and there exists a real place $v'\in \infty_L$ such that $C(L_{v'})$ is connected.
	\end{definition}

	\begin{lemma}\label{lemma stoll conjecture}
		Given a nontrivial extension of number fields $L/K,$ if Conjecture \ref{conjecture Stoll} holds for all smooth, projective, and geometrically connected curves defined over $K,$ then there exists a curve $C$ defined over $K$ such that the triple $(C,K,L)$ is of type $I.$ Furthermore, if $L$ has a real place, then this triple $(C,K,L)$ is of type $II.$
	\end{lemma}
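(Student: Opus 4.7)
My plan is to realize $C$ as a smooth projective geometrically connected curve over $K$ of genus at least $2$, so that Faltings' theorem delivers the finiteness of $C(K)$ and $C(L)$ for free, while Stoll's conjecture is granted by hypothesis. It then suffices to arrange $\emptyset\ne C(K)\subsetneq C(L)$ for Type $I$, and, when $L$ has a real place $v'$, additionally the connectedness of $C(L_{v'})$ for Type $II$.

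For Type $I$, pick any $\alpha\in L\setminus K$ with minimal polynomial $m(x)\in K[x]$, and choose $g(x)\in K[x]$ squarefree, coprime to $m$, so that $f:=m\cdot g$ is squarefree of odd degree at least $5$. Let $C$ be the smooth projective model of $y^2=f(x)$. Then $C$ has genus $(\deg f-1)/2\ge 2$; the unique $K$-rational point at infinity gives $C(K)\ne\emptyset$; the point $(\alpha,0)\in C(L)\setminus C(K)$ shows $C(K)\subsetneq C(L)$; and the triple $(C,K,L)$ is of type $I$.

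For Type $II$, fix $v'\in\infty_L^r$ and put $v=v'|_K$, so that $L_{v'}=K_v=\mathbb{R}$. A classical oval count for the smooth projective hyperelliptic curve $y^2=f(x)$ (comparing positivity intervals of $f$ on $\mathbb{R}$ with branches of the double cover, then gluing at infinity according to whether the leading coefficient is a real square) yields: $C(\mathbb{R})$ is connected exactly when either $\deg f$ is odd and $f$ has one real root via $\tau_v$, or $\deg f$ is even and $f$ has two real roots via $\tau_v$. Any root $\beta\in L$ of $f$ forces the real root $\tau_{v'}(\beta)$, so the strategy is to choose $\alpha\in L\setminus K$ whose minimal polynomial $m$ has one or two real roots via $\tau_v$ (equivalently, $K(\alpha)$ has one or two real places above $v$), and take $g\in K[x]$ squarefree, coprime to $m$, and totally positive on $\mathbb{R}$ via $\tau_v$ (for example $g(x)=\prod(x^2+c_i)$ with $\tau_v(c_i)>0$), adjusting $\deg g$ so that $\deg(mg)$ has the required parity and is at least $5$. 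The quadratic case $[L:K]=2$ with both places of $L$ above $v$ real is immediate: take $\alpha$ primitive, so $\deg m=2$ with two real roots, and take $\deg f=6$.

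The main obstacle is ensuring the existence of such an $\alpha$ in full generality. This is direct whenever $L$, or some nontrivial intermediate field $K\subsetneq K'\subset L$, has at most two real places above $v$. In the residual case where every such $K'$ has at least three real places above $v$ --- for example when $L/K$ is totally real of odd prime degree with $v$ totally split in $L$ --- the hyperelliptic construction cannot produce a connected real locus, and I would replace it by an elliptic curve $E/K$ with negative discriminant via $\tau_v$ (hence $E(K_v)=E(\mathbb{R})$ is connected), of Mordell--Weil rank zero over both $K$ and $L$, and with strictly larger torsion subgroup over $L$ than over $K$; existence of such an $E$ is established by standard twist and rank parity arguments, and is the delicate point in the proof.
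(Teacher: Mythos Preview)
Your Type~$I$ construction via hyperelliptic curves is correct and essentially parallel to the paper's idea, which instead uses a smooth plane curve of the form $w_2^{\,m}=\tilde f(w_0,w_1)(w_1^2-w_0^2)$ (or a minor variant) with $m$ odd; both give genus $\ge 2$, a $K$-point at infinity (resp.\ at $(1:1:0)$), and an $L$-point coming from the root $\theta$ of the minimal polynomial.

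Your Type~$II$ argument, however, has a genuine gap. You correctly identify that the hyperelliptic model $y^2=f(x)$ has connected real locus only when $\tau_v(f)$ has very few real roots, and you correctly observe that this cannot be arranged when, say, $L/K$ has odd prime degree $p\ge 3$ and $v$ is totally split (hence totally real) in $L$: then every $\alpha\in L\setminus K$ generates $L$, and its minimal polynomial has all $p$ roots real under $\tau_v$, forcing at least two real ovals. Your proposed fallback---an elliptic curve $E/K$ with negative discriminant at $v$, Mordell--Weil rank $0$ over both $K$ and $L$, and strict torsion growth from $K$ to $L$---is not established. The phrase ``standard twist and rank parity arguments'' does not constitute a proof: simultaneously forcing rank $0$ over a \emph{prescribed} extension $L$, torsion growth over that same $L$, and a sign condition on the discriminant at a prescribed place is a delicate problem, and no off-the-shelf theorem delivers this for arbitrary $L/K$.

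The paper circumvents the whole issue with a single uniform trick: it takes $C\subset\PP^2$ of the form $w_2^{\,m}=G(w_0,w_1)$ with $m$ \emph{odd}. Over any real completion the projection $(w_0:w_1:w_2)\mapsto(w_0:w_1)$ is then a bijection onto $\PP^1(\RR)$ (every real number has a unique real $m$-th root), so $C(L_{v'})\cong\PP^1(\RR)$ is automatically connected, with no case analysis and no need for an auxiliary elliptic curve. I recommend replacing your hyperelliptic double cover by such a superelliptic cover of odd degree; this preserves your Type~$I$ argument verbatim and makes Type~$II$ immediate.
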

	
	\begin{proof}
		Since $L$ is a finite separable extension over $K,$ there exists a $\theta\in L$ such that $L=K(\theta).$ Let $f(x)$ be the minimal polynomial of $\theta.$ Let $n=\deg(f),$ then $n=[L:K]\geq 2.$ Let $\tilde{f}(w_0,w_1)$ be the homogenization of $f.$ If $n$ is odd, we consider a curve $C$ defined over $K$ by a homogeneous equation: $w_2^{n+2}=\tilde{f}(w_0,w_1)(w_1^2-w_0^2)$ with homogeneous coordinates $(w_0:w_1:w_2)\in \PP^2.$ For the polynomials $f(x)$ and $x^2-1$ are separable and coprime in $K[x],$ the curve $C$ is smooth, projective, and geometrically connected. By genus formula for a plane curve, the genus of  $C$ equals $g(C)=n(n+1)/2>1.$ By Faltings's theorem, the sets $C(K)$ and $C(L)$ are finite. It is easy to check that $(w_0:w_1:w_2)=(1:1:0)\in C(K)$ and $(\theta:1:0)\in C(L)\backslash C(K).$ By the assumption that Conjecture \ref{conjecture Stoll} holds for all smooth, projective, and geometrically connected curves over $K,$ we have that the triple $(C,K,L)$ is of type $I.$ Since $n+2$ is odd, the space $C(K_{v'})$ is connected for all $v'\in \infty_L.$ So, if $L$ has a real place, then this triple $(C,K,L)$ is of type $II.$
		If $n$ is even, we replace the homogeneous equation $w_2^{n+2}=\tilde{f}(w_0,w_1)(w_1^2-w_0^2)$ by $w_2^{n+3}=\tilde{f}(w_0,w_1)w_1(w_1^2-w_0^2).$ The same argument applies to this new curve and new triple.
	\end{proof}
	
	\begin{remark}
		For some nonsquare integer $d,$ let $K=\QQ$ and $L=\QQ(\sqrt{d}).$  Consider an elliptic curve $E_d$ defined by a Weierstra\ss~ equation: $y^2=x^3+d.$ 
		Let $E_d^{(d)}$ be the quadratic twist of $E_d$ by $d.$ The curve $E_d$ is connected over $\RR.$ It is easy to check that the point $(x,y)=(0,\sqrt{d})\in C(L)\backslash C(K).$
		If both $E_d(\QQ)$ and $E_d^{(d)}(\QQ)$ are finite, then the set $E_d(L)$ is finite, cf. \cite[Exercise 10.16]{Si09}. If additionally, the Tate-Shafarevich group $\Sha(E_d,\QQ)$ is finite, then the triple $(E_d,K,L)$ is of type $II.$  
	\end{remark}

	\section{Poonen's proposition}

	For our result is base on Poonen's proposition \cite[Proposition 5.4]{Po10}. We recall that paper and his general result first. There exist some remarks on it in \cite[Section 4.1]{Li18}. Colliot-Th\'el\`ene \cite[Proposition 2.1]{CT10} gave another proof of that proposition.

	\begin{recall}\label{recall of Poonen's notation}
		Let $B$ be a smooth, projective, and geometrically connected variety over a number field $K.$ Let $\Lcal$ be a line bundle on $B,$ assuming the set of global sections $\Gamma(B,\Lcal^{\otimes2})\neq 0.$ Let $\Ecal=\Ocal_B\oplus\Ocal_B\oplus \Lcal.$ Let $a$ be a constant in $K^\times,$ and let $s$ be a nonzero global section in $\Gamma(B,\Lcal^{\otimes2}).$ The zero locus of $(1,-a,-s)\in\Gamma(B,\Ocal_B\oplus\Ocal_B\oplus\Lcal^{\otimes2})\subset\Gamma(B,\Sym^2\Ecal)$ in the projective space bundle $\Proj(\Ecal)$ is  a projective and geometrically integral variety, denoted by $X$ with the natural projection $X\to B.$ Let $\overline{K}$ be an algebraic closure field of $K.$ Denote $B\times_{\Spec K} {\Spec \overline{K}}$ by $\overline{B}.$
	\end{recall}	
	\begin{prop}\cite[Proposition 5.3]{Po10}\label{Poonen's main proposition}
		Given a number field $K,$ all notations are the same as in Recall \ref{recall of Poonen's notation}. Let $\alpha\colon X\to B$ be the natural projection.
		Assume that 
		\begin{itemize}
			\item the closed subscheme defined by $s=0$ in $B$ is smooth, projective, and geometrically connected,
			\item $\Br\overline{B}=0$ and $X(\AA_K)\neq \emptyset.$
		\end{itemize}
		Then $X$ is smooth, projective, and geometrically connected. And $\alpha^*\colon\Br(B)\to \Br(X)$ is an isomorphism. 
	\end{prop}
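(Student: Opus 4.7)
The proof splits into two parts: the geometric properties of $X,$ and the Brauer group isomorphism. For the geometry, I would trivialize $\Lcal$ on a small affine open $U\subset B,$ turning the equation of $X|_U$ into the explicit conic $u^2-av^2-sw^2=0$ in $\PP^2\times U,$ with $s$ now a regular function cutting out $Z\cap U.$ The Jacobian criterion shows that the only candidate singular locus is the section $(0{:}0{:}1)$ over $Z;$ in the chart $w=1,$ the local equation $u^2-av^2=s$ is smooth because the hypothesis that $Z$ is smooth allows $s$ to be taken as part of a regular system of parameters at any point of $Z,$ so that $u,v$ together with the remaining parameters form a regular system on $X.$ Projectivity is inherited from the ambient $\PP(\Ecal),$ and geometric integrality follows from the generic fiber being a smooth conic (hence geometrically integral) over $K(B),$ combined with $B$ being geometrically connected.

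For the Brauer group, I would first establish that $\Br(\overline X)=0.$ Over $\overline K$ the scalar $a$ is a square, so $(u,v,w)=(\sqrt a,1,0)$ defines a section of $\overline\alpha\colon\overline X\to\overline B,$ making $\overline X$ birational to $\PP^1\times\overline B.$ Birational invariance of the Brauer group for smooth projective varieties, combined with the hypothesis $\Br(\overline B)=0,$ yields $\Br(\overline X)=0.$ Under these two vanishings the Hochschild--Serre five-term sequences collapse, for $Y\in\{X,B\},$ to
\begin{equation*}
0 \to \operatorname{Pic}(Y) \to \operatorname{Pic}(\overline Y)^{G_K} \to \Br(K) \to \Br(Y) \to H^1(G_K, \operatorname{Pic}(\overline Y)) \to H^3(G_K,\overline K^\times),
\end{equation*}
and pullback by $\alpha$ produces a morphism between the two resulting sequences. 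By a standard four/five lemma chase, the proposition reduces to controlling how $\alpha^*$ acts on the Galois cohomology of $\operatorname{Pic}(\overline{\phantom{B}}).$

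The decisive step, and the main obstacle, is the computation of $\operatorname{Pic}(\overline X)$ as a $G_K$-module. Over $\overline B\setminus\overline Z$ the morphism $\overline\alpha$ is a $\PP^1$-bundle with a section, so its Picard is $\operatorname{Pic}(\overline B\setminus\overline Z)\oplus\ZZ;$ the degenerate fibers over $\overline Z,$ each a pair of lines meeting at a single point, would naively contribute extra vertical divisors, but the relation that every singular fiber is linearly equivalent to a smooth fiber pins down the two line components modulo classes pulled back from $\overline B.$ The upshot should be a $G_K$-equivariant decomposition $\operatorname{Pic}(\overline X)\cong\operatorname{Pic}(\overline B)\oplus\ZZ$ with trivial Galois action on the $\ZZ$ factor, generated by the class of a smooth fiber that lifts to $\operatorname{Pic}(X)$ itself. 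This equalizes the cokernels of $\operatorname{Pic}(Y)\hookrightarrow\operatorname{Pic}(\overline Y)^{G_K}$ for $Y=X$ and $Y=B$ and identifies the two $H^1$ terms under $\alpha^*,$ after which the five lemma forces $\alpha^*\colon\Br(B)\to\Br(X)$ to be an isomorphism. The cleanest formalization is via the excision sequence for $\overline\alpha^{-1}(\overline Z)\hookrightarrow\overline X;$ the hypothesis $X(\AA_K)\neq\emptyset$ enters here to guarantee that the fiber class is defined globally over $K$ in a way compatible with the Galois structure.
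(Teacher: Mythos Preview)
The paper does not prove this proposition; it is quoted from Poonen \cite{Po10} and used as a black box (the paper also points to Colliot-Th\'el\`ene \cite{CT10} for an alternative argument). There is therefore no in-paper proof to compare your attempt against.

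On the merits of your sketch: the architecture is sound, but the Picard computation---which you rightly flag as the crux---is incorrect as written. The quotient $\operatorname{Pic}(\overline X)/\alpha^*\operatorname{Pic}(\overline B)$ has rank $2$, not $1$: the components $D_\pm=\{u=\pm\sqrt a\,v\}$ of $\alpha^{-1}(\overline Z)$ genuinely contribute an extra class, since $D_+$ meets a smooth fibre in degree $0$ whereas $\Ocal(1)$ meets it in degree $2$, so $D_+\notin\langle\Ocal(1),\alpha^*\operatorname{Pic}(\overline B)\rangle$. What actually makes the argument work is that the correct generators of the quotient are the two section classes $S_\pm=\{(\pm\sqrt a:1:0)\}$; one checks $S_++S_-\equiv\Ocal(1)$ and $S_--S_+\equiv D_+$ modulo $\alpha^*\operatorname{Pic}(\overline B)$, so in this basis the quotient is the permutation module $\ZZ[\Gal(K(\sqrt a)/K)]$, and Shapiro's lemma gives $H^1=0$. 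Your conclusion then follows, but not for the reason you state. Two smaller issues: a ``smooth fibre'' is not a divisor once $\dim B>1$ (in the paper's application $B=C\times\PP^1$ is a surface), so your proposed generator should be $\Ocal(1)$ or a section class; and the hypothesis $X(\AA_K)\neq\emptyset$ is used to make $\Br(K)\hookrightarrow\Br(X)$ (hence also $\Br(K)\hookrightarrow\Br(B)$, via $B(\AA_K)\supset\alpha(X(\AA_K))$), which forces the maps $\operatorname{Pic}(\overline{\,\cdot\,})^{G_K}\to\Br(K)$ in both Hochschild--Serre sequences to vanish and sets up the five-lemma cleanly---your account of its role is too vague.
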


	\section{Main results for Ch\^atelet surface bundles over curves}

	\subsection{Preparation Lemmas} We state the following lemmas, which will be used for the proof of our theorems. 
	
	Fibration methods are used to do research on weak approximation, weak approximation with Brauer-Manin obstruction between two varieties. We modify those fibration methods to fit into our context. 
	\begin{lemma}\label{lemma fiber criterion for wabm}
		Given a number field $K,$ and a finite subset $S\subset \Omega_K,$ let $f\colon X\to Y$ be a $K$-morphism of proper $K$-varieties $X$ and $Y$. We assume that
		\begin{enumerate}{
				\item\label{fiber criterion for wabm condition 1}  the set $Y(K)$ is finite,
				\item\label{fiber criterion for wabm condition 2}  the variety $Y$ satisfies weak approximation with Brauer-Manin obstruction off $S,$
				\item\label{fiber criterion for wabm condition 3}  for any $P\in Y(K),$ the fiber $X_P$ of $f$ over $P$ satisfies weak approximation off $S.$}
		\end{enumerate}
		Then $X$ satisfies weak approximation with Brauer-Manin obstruction off $S.$
	\end{lemma}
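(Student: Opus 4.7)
The plan is to combine the functoriality of the Brauer-Manin pairing with respect to $f$, the rigidity provided by finiteness of $Y(K)$, and the fiberwise weak approximation hypothesis.

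First I would fix an arbitrary adelic point $(P_v)_{v\in\Omega_K}\in X(\AA_K)^{\Br}$ and push it down to $(f(P_v))_{v\in\Omega_K}\in Y(\AA_K)$. Functoriality of the Brauer-Manin pairing under $f^*\colon\Br(Y)\to\Br(X)$ immediately yields $(f(P_v))\in Y(\AA_K)^{\Br}$, so $pr^S((f(P_v)))$ belongs to $pr^S(Y(\AA_K)^{\Br})$.

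The key step is then to upgrade hypothesis (\ref{fiber criterion for wabm condition 2}) to the statement that $pr^S(Y(\AA_K)^{\Br})$ coincides with the diagonal image of $Y(K)$ inside $Y(\AA_K^S)$. Since $Y$ is proper, each $Y(K_v)$ is Hausdorff, hence $Y(\AA_K^S)=\prod_{v\notin S}Y(K_v)$ is Hausdorff for the product topology; by hypothesis (\ref{fiber criterion for wabm condition 1}), $Y(K)$ is finite, so its diagonal image is a closed subset of $Y(\AA_K^S)$. Combined with the density supplied by hypothesis (\ref{fiber criterion for wabm condition 2}) (together with the inclusion $Y(K)\subset pr^S(Y(\AA_K)^{\Br})$ coming from global reciprocity), this forces $pr^S(Y(\AA_K)^{\Br})=Y(K)$ inside $Y(\AA_K^S)$. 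In particular there exists a unique $Q\in Y(K)$ with $f(P_v)=\tau_v(Q)$ in $Y(K_v)$ for every $v\notin S$, so $(P_v)_{v\notin S}$ lies in $X_Q(\AA_K^S)=\prod_{v\notin S}X_Q(K_v)$.

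Finally, hypothesis (\ref{fiber criterion for wabm condition 3}) says that $X_Q(K)$ is dense in $X_Q(\AA_K^S)$; since $X_Q(K)\subset X(K)$, every basic open neighborhood of $(P_v)_{v\notin S}$ in $X(\AA_K^S)$ meets $X(K)$. As $(P_v)$ was arbitrary, this yields the density of $X(K)$ in $pr^S(X(\AA_K)^{\Br})$. The main obstacle is the closure argument in the second step: it crucially uses properness of $Y$ to make $Y(\AA_K^S)$ Hausdorff, which is what upgrades density of $Y(K)$ into an honest set-theoretic equality and lets us produce a genuine $K$-point $Q$ in the base instead of merely an adelic approximation of it.
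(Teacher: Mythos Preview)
Your proof is correct and follows essentially the same strategy as the paper's: push $X(\AA_K)^{\Br}$ down to $Y(\AA_K)^{\Br}$ by functoriality, use finiteness of $Y(K)$ together with hypothesis~(2) to obtain the equality $pr^S(Y(\AA_K)^{\Br})=Y(K)$, land in a fiber $X_Q$ over a $K$-rational point, and finish with fiberwise weak approximation off $S$. The only cosmetic difference is that the paper phrases the argument in terms of a basic open set $N\subset X(\AA_K)$ meeting $X(\AA_K)^{\Br}$, whereas you start from a single adelic point $(P_v)\in X(\AA_K)^{\Br}$ and then pass to neighborhoods; the logical content is the same.
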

	
	\begin{proof}
		For any finite subset $S'\subset\Omega_K\backslash S,$ take an open subset $N=\prod_{v\in S'}U_v\times \prod_{v\notin S'}X(K_v)\subset X(\AA_K)$ such that $N\bigcap X(\AA_K)^{\rm Br}\neq \emptyset.$ Let $M=\prod_{v\in S'}f(U_v)\times \prod_{v\notin S'}f(X(K_v)),$ then by the functoriality of Brauer-Manin pairing, $M\bigcap Y(\AA_K)^{\rm Br}\neq \emptyset.$ By Assumptions (\ref{fiber criterion for wabm condition 1}) and (\ref{fiber criterion for wabm condition 2}), we have $Y(K)= pr^S(Y(\AA_K)^{\rm Br}).$ So there exists $P_0\in pr^S(M)\bigcap Y(K).$ Consider the fiber $X_{P_0}.$ Let $L=\prod_{v\in S'} [X_{P_0}(K_v)\bigcap U_v]\times \prod_{v\notin S'\cup S} X_{P_0}(K_v),$ then it is a nonempty open subset of $X_{P_0}(\AA_K^S).$  By Assumption (\ref{fiber criterion for wabm condition 3}), there exists $Q_0\in L\bigcap X_{P_0}(K).$ So $Q_0\in X(K)\bigcap N,$ which implies that $X$ satisfies weak approximation with Brauer-Manin obstruction off $S.$
	\end{proof}

	\begin{lemma}\label{lemma fiber criterion for not wabm}
		Given a number field $K,$ and a finite subset $S\subset \Omega_K,$ let $f\colon X\to Y$ be a $K$-morphism of proper $K$-varieties $X$ and $Y$. We assume that
		\begin{enumerate}{
				\item\label{fiber criterion for not wabm condition 1} the set $Y(K)$ is finite,
				\item\label{fiber criterion for not wabm condition 2} the morphism $f^*\colon \Br(Y)\to \Br(X)$ is surjective,
				\item\label{fiber criterion for not wabm condition 3} there exists some $P\in Y(K)$ such that the fiber $X_P$ of $f$ over $P$ does not satisfy weak approximation off $S,$ and that $\prod_{v\in S}X_P(K_v)\neq \emptyset.$  }
		\end{enumerate}
		Then $X$ does not satisfy weak approximation with Brauer-Manin obstruction off $S.$
	\end{lemma}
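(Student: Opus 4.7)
The plan is to exhibit a nonempty open set $N \subset X(\AA_K^S)$ that meets $pr^S(X(\AA_K)^{\Br})$ but is disjoint from the diagonal image of $X(K)$; this directly contradicts density of $X(K)$ in $pr^S(X(\AA_K)^{\Br})$.

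First, by hypothesis (\ref{fiber criterion for not wabm condition 3}), weak approximation off $S$ fails for $X_P$, so there exist a finite $T \subset \Omega_K \setminus S$ and nonempty open $U_v \subset X_P(K_v)$ for $v \in T$ such that $U := \prod_{v \in T} U_v \times \prod_{v \notin T \cup S} X_P(K_v)$ is nonempty and disjoint from $X_P(K)$. Combining any $(Q_v)_{v \notin S} \in U$ with some $(Q_v)_{v \in S} \in \prod_{v \in S} X_P(K_v)$ (nonempty by (\ref{fiber criterion for not wabm condition 3})) produces an adelic point $(Q_v) \in X_P(\AA_K) \subset X(\AA_K)$. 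I claim $(Q_v) \in X(\AA_K)^{\Br}$: by (\ref{fiber criterion for not wabm condition 2}), every $A \in \Br(X)$ is of the form $f^*B$ for some $B \in \Br(Y)$, and since $f(Q_v) = P$ for every $v$, functoriality of evaluation gives $A(Q_v) = B(P)$, which lies in $\Br(K)$ because $P \in Y(K)$. Global reciprocity then yields $\sum_v \inv_v A(Q_v) = \sum_v \inv_v B(P) = 0$.

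Next I would construct $N$ so that any $K$-point inside it is forced to lie in the single fiber $X_P$ and moreover to correspond to a point of the forbidden $U$. For each $v \in T$, since $Y(K)$ is finite by (\ref{fiber criterion for not wabm condition 1}) and embeds discretely into $Y(K_v)$, pick an open $V_v \subset Y(K_v)$ with $V_v \cap Y(K) = \{P\}$. Because $X_P(K_v) \subset X(K_v)$ carries the subspace topology, pick an open $\tilde W_v \subset X(K_v)$ with $\tilde W_v \cap X_P(K_v) = U_v$, so in particular $Q_v \in \tilde W_v$. Set $W_v := \tilde W_v \cap f^{-1}(V_v)$, which still contains $Q_v$ because $f(Q_v) = P \in V_v$, and define
\[
N := \prod_{v \in T} W_v \times \prod_{v \notin T \cup S} X(K_v) \subset X(\AA_K^S).
\]
Then $pr^S((Q_v)) \in N \cap pr^S(X(\AA_K)^{\Br})$, so this intersection is nonempty. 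Conversely, if $R \in X(K) \cap N$, then for each $v \in T$ we have $f(R) \in V_v \cap Y(K) = \{P\}$, forcing $R \in X_P(K)$ and $R \in \tilde W_v \cap X_P(K_v) = U_v$; the diagonal adelic image of $R$ therefore lies in $U$, contradicting $U \cap X_P(K) = \emptyset$.

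The hard part will be the topological bookkeeping in assembling $N$: it must simultaneously contain the chosen adelic point lying in $X_P$, force any rational point it captures to lie over $P$, and restrict the fiber intersection back inside the forbidden $U_v$. Finiteness of $Y(K)$ together with continuity of $f$ provides the fiber-separating neighborhoods $V_v$; the closed embedding $X_P \hookrightarrow X$ supplies the subspace-topology handle giving $\tilde W_v \cap X_P(K_v) = U_v$; hypothesis (\ref{fiber criterion for not wabm condition 2}) enters only in the Brauer-Manin verification via $f^*\Br(Y) = \Br(X)$.
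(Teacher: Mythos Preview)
Your proof is correct and follows essentially the same approach as the paper's: both isolate the bad fiber $X_P$ by shrinking to open sets that meet $Y(K)$ only in $P$, then extend the fiber's witness to failure of weak approximation to an open set in $X(\AA_K^S)$ that meets $pr^S(X(\AA_K)^{\Br})$ (via assumption~(\ref{fiber criterion for not wabm condition 2}) and functoriality) but misses $X(K)$. The only cosmetic difference is that the paper separates $P$ from the other rational points of $Y$ using a single Zariski open $V_{P_0}\subset Y$, whereas you use $v$-adic open neighborhoods $V_v\subset Y(K_v)$ for each $v\in T$; both work since $Y(K)$ is finite.
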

	
	\begin{proof}	
		By Assumption (\ref{fiber criterion for not wabm condition 3}), take a $P_0\in Y(K)$ such that the fiber $X_{P_0}$ does not satisfy weak approximation  off $S,$ and that $\prod_{v\in S} X_{P_0}(K_v)\neq \emptyset.$ Then there exist a finite nonempty subset $S'\subset\Omega_K\backslash S$ and a nonempty open subset $L=\prod_{v\in S'}U_v\times \prod_{v\notin S'}X_{P_0}(K_v)\subset X_{P_0}(\AA_K)$ such that $L\bigcap X_{P_0}(K)=\emptyset.$ 
		By Assumption (\ref{fiber criterion for not wabm condition 1}), the set $Y(K)$ is finite, so we can take a Zariski open subset $V_{P_0}\subset Y$ such that $V_{P_0}(K)=\{P_0\}.$ For any $v\in S',$ since $U_v$ is open in $X_{P_0}(K_v)\subset f^{-1}(V_{P_0})(K_v),$ we can take an open subset $W_v$ of $f^{-1}(V_{P_0})(K_v)$ such that $W_v\cap X_{P_0}(K_v)=U_v.$ 	
		Consider the open subset $N=\prod_{v\in S'}W_v\times \prod_{v\notin S'}X(K_v)\subset X(\AA_K),$ then $L\subset N.$ By the functoriality of Brauer-Manin pairing and Assumption (\ref{fiber criterion for not wabm condition 2}), we have $L\subset N\bigcap X(\AA_K)^{\Br}.$ So $N\bigcap X(\AA_K)^{\Br} \neq \emptyset.$ But $N\bigcap X(K)=N\bigcap X_{P_0}(K)=L\bigcap X_{P_0}(K)
		=\emptyset,$ which implies that $X$ does not satisfy weak approximation with Brauer-Manin obstruction off $S.$
	\end{proof}

	We use the following lemma to choose a dominant morphism from a given curve to $\PP^1.$

	\begin{lemma}\label{lemma choose base change morphism}
		Given a nontrivial extension of number fields $L/K,$ let $C$ be a smooth, projective, and geometrically connected curve defined over $K.$ Assume that the triple $(C,K,L)$ is of type $I$ (Definition \ref{definition curve of type}). For any finite $K$-subscheme $R\subset \PP^1\backslash\{0,\infty\},$  there exists a dominant $K$-morphism $\gamma\colon  C\to \PP^1$ such that $\gamma(C(L)\backslash C(K))=\{0\}\subset \PP^1(K),$ $\gamma(C(K))=\{\infty\}\subset \PP^1(K),$ and that $\gamma$ is \'etale over $R.$ 
	\end{lemma}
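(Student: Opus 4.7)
The plan is to realize $\gamma$ as a rescaled rational function on $C$ whose zero and pole loci contain $C(L)\setminus C(K)$ and $C(K)$ respectively, then adjust by a scalar so that the branch locus avoids $R$.

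\textbf{Setting up divisors.} Let $A := \sum_{P\in C(K)}P$, the effective $K$-rational divisor on $C$ supported on $C(K)$ with multiplicity one at each point; by the type $I$ assumption, $\deg A = |C(K)| \geq 1$. Each point of $T := C(L)\setminus C(K)$ corresponds to a closed point $x$ of $C$ with residue field $k(x)$ strictly containing $K$ and embeddable into $L$; let $B$ be the sum with multiplicity one of all such closed points. Then $\deg B \geq 1$, $\operatorname{supp}(A)\cap\operatorname{supp}(B)=\emptyset$ (by comparison of residue fields), and the genus $g := g(C)$ satisfies $g \geq 1$ (a genus-$0$ curve with a $K$-point would be $\PP^1_K$ with infinitely many $K$-points).

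\textbf{Producing the function.} Choose $N$ large enough that $N\deg A > 2g-2+\deg B$; by Riemann--Roch,
\[
\dim_K V = N\deg A - \deg B + 1 - g > 0, \qquad \text{where } V := H^0(C, NA-B) \subset K(C).
\]
For each $P\in C(K)$, let $W_P := H^0(C, NA-NP-B)\subset V$, the subspace of sections with no pole at $P$. For $N$ large enough, $W_P \subsetneq V$: if $\deg A \geq 2$, a second Riemann--Roch gives $\dim V - \dim W_P = N$, while if $\deg A = 1$ then $A = P$ and $W_P = H^0(C,-B) = 0 \neq V$. Since $K$ is infinite and $\operatorname{supp}(A)$ is finite, $V$ is not a union of finitely many proper subspaces, so we may choose $f \in V \setminus \bigcup_{P\in C(K)} W_P$. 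By construction $f$ is non-constant, has a pole at every point of $C(K)$ and a zero at every closed point of $\operatorname{supp}(B)$; it therefore defines a dominant $K$-morphism $f\colon C \to \PP^1_K$ with $f(C(K))=\{\infty\}$ and $f(T)=\{0\}$.

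\textbf{Rescaling for \'etaleness.} Since $f$ is a finite surjection between smooth projective curves, its branch locus $B_f \subset \PP^1(\overline{K})$ is finite. For any $c\in K^\times$, composing $f$ with the automorphism $x\mapsto cx$ of $\PP^1_K$ gives $\gamma_c := cf$, which has the same ramification divisor on $C$, still satisfies $\gamma_c(C(K))=\{\infty\}$ and $\gamma_c(T)=\{0\}$ (since $0$ and $\infty$ are fixed), and has branch locus $(B_f\cap\{0,\infty\})\cup c\cdot(B_f\setminus\{0,\infty\})$. Since $R\subset \PP^1\setminus\{0,\infty\}$, \'etaleness of $\gamma_c$ over $R$ reduces to $cb\neq r$ for all $b\in B_f\setminus\{0,\infty\}$ and $r\in R(\overline{K})$; this excludes only finitely many values of $c\in K^\times$, and any remaining choice yields the desired $\gamma := \gamma_c$.

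The main technical point is securing the pole condition at every $K$-rational point of $C$ simultaneously: Riemann--Roch alone produces a large space $V$, but any individual $f\in V$ could fail the pole condition at some specific $P\in C(K)$. The standard fact that an infinite-field vector space is not a finite union of proper subspaces is what allows a single $f$ to have poles at all points of $C(K)$ at once; with such an $f$ in hand, the scalar adjustment for \'etaleness over $R$ is routine.
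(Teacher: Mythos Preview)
Your proof is correct and follows the same strategy as the paper: produce via Riemann--Roch a nonconstant rational function with poles along $C(K)$ and zeros along $C(L)\setminus C(K)$, then rescale by a scalar in $K^\times$ to push the branch locus off $R$. The paper's argument is terser---it simply asserts the existence of such a $\phi$ ``by Riemann--Roch'' without the linear-algebra step---whereas you explicitly justify, via the union-of-proper-subspaces trick, that one can arrange a pole at \emph{every} point of $C(K)$ simultaneously; this is a genuine detail the paper leaves implicit.
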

	
	\begin{proof}
		Let $K(C)$ be the function field of $C.$ 
		For $C(K)$ and $C(L)$ are finite nonempty sets and $C(L)\backslash C(K)\neq \emptyset,$ by Riemann-Roch theorem, we can choose a rational function $\phi\in K(C)^\times\backslash K^\times$ such that the set of its poles contains $C(K),$ and that the set of its zeros contains $C(L)\backslash C(K).$ 
		This rational function $\phi$ gives a dominant $K$-morphism $\gamma_0\colon C\to \PP^1$ such that $\gamma_0(C(L)\backslash C(K))=\{0\}\subset \PP^1(K)$ and $\gamma_0(C(K))=\{\infty\}\subset \PP^1(K).$  We can choose an automorphism $\varphi_{\lambda_0}\colon \PP^1\to \PP^1, (u:v)\mapsto (\lambda_0 u:v)$ with $\lambda_0\in K^\times$ such that the branch locus of $\gamma_0$ has no intersection with $\varphi_{\lambda_0}(R).$ Let $\lambda= (\varphi_{\lambda_0})^{-1}\circ\gamma_0.$ Then the morphism $\lambda$ is \'etale over $R$ and satisfies other conditions. 
	\end{proof}

	The following lemma is well known.
	
	\begin{lemma}\label{lemma zero of Br B}
		Let $C$ be a curve over a field, and let $B=C\times \PP^1.$ Then $\Br \overline{B}=0.$ 
	\end{lemma}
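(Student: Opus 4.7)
The plan is to reduce the vanishing to two well-known facts: Tsen's theorem for curves over algebraically closed fields, and the projective bundle formula for Brauer groups. First I would base change to the algebraic closure: writing $\overline{B} = \overline{C} \times_{\overline{k}} \PP^1_{\overline{k}}$, the problem becomes the vanishing of $\Br$ of a product of a curve over $\overline{k}$ with $\PP^1_{\overline{k}}$.

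Next I would handle the curve factor. Since $\overline{C}$ is a curve over an algebraically closed field, its function field is a $C_1$-field, so by Tsen's theorem it has trivial Brauer group; combined with the residue exact sequence at closed points (purity for Brauer groups of smooth curves), this yields $\Br(\overline{C}) = 0$. (If $\overline{C}$ is not smooth, one first reduces to the smooth projective model, whose Brauer group contains $\Br(\overline{C})$ as a subgroup via restriction.)

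Finally I would invoke the projective bundle formula via the Leray spectral sequence for the projection $p\colon \overline{C}\times\PP^1_{\overline{k}}\to\overline{C}$. The relevant higher direct images are $R^0 p_*\GG_m=\GG_m$, $R^1 p_*\GG_m=\ZZ$ (the degree sheaf, by $\mathrm{Pic}(\PP^1_K)=\ZZ$ for any field $K$), and $R^i p_*\GG_m=0$ for $i\ge 2$ (since $\Br(\PP^1_K)=\Br(K)$ reduces the $R^1$ computation of $\GG_m$-cohomology and higher vanishing comes from the cohomological dimension of $\PP^1$). The $E_2$-page then contributes to $H^2(\overline{B},\GG_m)$ the two terms $H^2(\overline{C},\GG_m)=\Br(\overline{C})$ and $H^1(\overline{C},\ZZ)$. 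The first vanishes by the previous step, and the second vanishes because $H^1_{\mathrm{\acute et}}(\overline{C},\ZZ)=\mathrm{Hom}(\pi_1^{\mathrm{\acute et}}(\overline{C}),\ZZ)=0$, since $\pi_1^{\mathrm{\acute et}}$ is profinite and $\ZZ$ is torsion-free discrete.

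The only point requiring care is the identification of the higher direct images $R^i p_*\GG_m$; everything else is immediate. This is standard (a minor computation with the Kummer sequence plus proper base change), so I do not expect a serious obstacle. The conclusion $\Br\overline{B}=0$ follows by assembling the three steps.
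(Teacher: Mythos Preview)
Your proof is correct and follows the same two-step strategy as the paper: first $\Br(\overline{C})=0$ by Tsen/Grothendieck, then $\Br(\overline{C}\times\PP^1)\cong\Br(\overline{C})$. The only difference is that the paper simply asserts the second isomorphism (citing Grothendieck's Brauer III), whereas you unpack it via the Leray spectral sequence for $p\colon\overline{C}\times\PP^1\to\overline{C}$; your added detail is sound and in fact supplies the justification the paper omits.
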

	
	\begin{proof}
		By \cite[III, Corollary 1.2]{Gr68}, the Brauer group for a given curve over an algebraic closed field is zero. So $\Br(\overline{C}\times \overline{\PP^1})\cong \Br(\overline{C})=0.$
	\end{proof}

	\begin{definition}
		Let $C$ be a smooth, projective, and geometrically connected curve defined over a number field. We say that a morphism
		$\beta\colon X\to C$ is a Ch\^atelet surface bundle over the curve $C,$ if
		\begin{itemize}
			\item $X$ is a smooth, projective, and geometrically connected variety,
			\item the morphism $\beta$ is faithfully flat, and proper,
			\item the generic fiber of $\beta$ is a Ch\^atelet surface over the function field of $C.$
		\end{itemize}
	\end{definition}

	Next, we construct Ch\^atelet surface bundles over curves to give negative answers to Questions \ref{Questions}.

	\subsection{Non-invariance of weak approximation with Brauer-Manin obstruction}
	
	For any quadratic extension of number fields $L/K,$ and assuming Conjecture \ref{conjecture Stoll}, Liang \cite[Theorem 4.5]{Li18} constructed  a Ch\^atelet surface bundle over a curve to give a negative answer to Question \ref{question on WA1}. Assuming that the extension $L/K$ is quadratic, he constructed a Ch\^atelet surface defined over $K$ such that the property of weak approximation is not invariant under the extension of $L/K.$ Then choosing a higher genus curve, he combined this Ch\^atelet surface with the construction method of Poonen \cite{Po10} to get the result. His method only works for quadratic extensions. In this subsection, we generalize his result to any extension $L/K.$

	\begin{theorem}\label{theorem main result: non-invariance of weak approximation with BMO}
		For any nontrivial extension of number fields $L/K,$ and any finite subset $T\subset \Omega_L,$ assuming that Conjecture \ref{conjecture Stoll} holds over $K,$ there exists a Ch\^atelet surface bundle over a curve: $X\to C$ defined over $K$ such that
		\begin{itemize}
			\item $X$ has a $K$-rational point, and satisfies weak approximation with Brauer-Manin obstruction 
			off $\infty_K,$
			\item $X_L$ does not satisfy weak approximation with Brauer-Manin obstruction 
			off $T.$
		\end{itemize}
	\end{theorem}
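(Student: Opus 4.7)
The plan is to apply the Poonen-type construction of \cite{Po10} to build a Ch\^atelet surface bundle $\beta\colon X\to C$ whose fiber over every $K$-rational point of $C$ is a fixed ``good'' Ch\^atelet surface $V_\infty$ and whose fiber over every $L$-rational (but not $K$-rational) point of $C$ is a fixed ``bad'' Ch\^atelet surface $V_0$, and then to read off the arithmetic properties of $X$ and $X_L$ from those of $V_\infty$ and $V_0$ via the two fibration lemmas above.

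First I would set up the data. By Theorem \ref{theorem Chebotarev density theorem}, the finite places of $K$ lying in $\Omega_K\setminus(\infty_K^c\cup 2_K)$ that split completely in $L$ form a set of positive density, so I may choose one such place $v_0$ whose image in $\Omega_K$ avoids the restriction of $T$ to $K$; setting $S=\{v_0\}$ then yields $S_L\cap T=\emptyset$. Next, invoke Lemma \ref{lemma stoll conjecture} to produce a curve $C/K$ with $(C,K,L)$ of type $I$, and Lemma \ref{lemma choose base change morphism} to obtain a dominant $K$-morphism $\gamma\colon C\to\PP^1$ with $\gamma(C(K))=\{\infty\}$, $\gamma(C(L)\setminus C(K))=\{0\}$, \'etale over $\{0,\infty\}$ and over the discriminant locus of the pencil below. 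Apply Proposition \ref{proposition the valuation of Brauer group on local points are fixed outside S and take two value on S} to $(L/K,S)$ to produce $V_0\colon y^2-az^2=P_0(x)$; Proposition \ref{proposition the valuation of Brauer group on local points are fixed outside S and take two value on S property} then guarantees that $V_{0,L}$ fails weak approximation off $T$ since $T\cap S_L=\emptyset$. For $V_\infty$ I would, using the \emph{same} $a$, take $V_\infty\colon y^2-az^2=P_\infty(x)$ with $P_\infty(x)=(x^2-a)(x^2+c)$ for some $c\in K^\times$ chosen so that $P_\infty$ is separable and coprime to $P_0$; Remark \ref{remark birational to Hasse-Minkowski theorem} then shows that $V_\infty$ has $K$-points and satisfies weak approximation over both $K$ and $L$. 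Finally build the Ch\^atelet pencil $\mathcal{X}\to\PP^1$ whose fibers over $0$ and $\infty$ are $V_0$ and $V_\infty$ (via the bihomogeneous form $uP_\infty(x)+vP_0(x)$ on $\PP^1$ with coordinates $(u\colon v)$), and set $X:=\mathcal{X}\times_{\PP^1}C$ with its induced map $\beta\colon X\to C$.

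For the Brauer group analysis I would take $B=C\times\PP^1$ in the framework of Recall \ref{recall of Poonen's notation}: Lemma \ref{lemma zero of Br B} gives $\Br\overline{B}=0$, and $X(\AA_K)\neq\emptyset$ because $V_\infty$ contributes $K$-points above each element of $C(K)$. Hence Proposition \ref{Poonen's main proposition} applies over $K$ to yield $\Br(B)\cong\Br(X)$, and composing with the projective bundle isomorphism $\Br(C)\cong\Br(C\times\PP^1)$ shows that $\beta^*\colon\Br(C)\to\Br(X)$ is surjective; the same holds over $L$ since all hypotheses transfer. For the affirmative conclusion I would apply Lemma \ref{lemma fiber criterion for wabm} to $\beta$ with $S=\infty_K$: the set $C(K)$ is finite (type $I$), $C$ satisfies weak approximation with Brauer-Manin obstruction off $\infty_K$ by Conjecture \ref{conjecture Stoll}, and each fiber over $C(K)$ is $V_\infty$, which satisfies weak approximation. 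This yields $X(K)\neq\emptyset$ and weak approximation with Brauer-Manin obstruction off $\infty_K$ on $X$. For the negative conclusion I would apply Lemma \ref{lemma fiber criterion for not wabm} to $\beta_L$ with $S=T$: $C(L)$ is finite, $\beta_L^*$ is surjective by the Brauer group computation, and for any $P\in C(L)\setminus C(K)$ the fiber is $V_{0,L}$, which fails weak approximation off $T$ and has $L_{v'}$-points for every $v'\in T$ (since $V_{0,L}$ has adelic points). Hence $X_L$ fails weak approximation with Brauer-Manin obstruction off $T$.

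The main technical obstacle will be to verify that the Ch\^atelet pencil $\mathcal{X}\to\PP^1$ is genuinely a Ch\^atelet surface bundle with smooth total space, the prescribed fibers over $0$ and $\infty$, and a discriminant locus that is smooth, projective, and geometrically connected, so that Proposition \ref{Poonen's main proposition} actually applies; this is handled as in \cite[Section 5]{Po10} by choosing the defining bihomogeneous form with sufficient care, and by using the \'etaleness of $\gamma$ over the discriminant locus to preserve smoothness after pullback.
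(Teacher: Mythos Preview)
Your approach is essentially identical to the paper's: same choice of $v_0$ avoiding the restrictions of $T$, same use of Propositions \ref{proposition the valuation of Brauer group on local points are fixed outside S and take two value on S} and \ref{proposition the valuation of Brauer group on local points are fixed outside S and take two value on S property} for $V_0$, same appeal to Remark \ref{remark birational to Hasse-Minkowski theorem} for $V_\infty$ (the paper simply takes $P_\infty(x)=(1-x^2)(x^2-a)$), and the same fibration Lemmas \ref{lemma fiber criterion for wabm} and \ref{lemma fiber criterion for not wabm} to conclude. One small correction: to place the construction inside the framework of Recall \ref{recall of Poonen's notation} the section $s'$ must lie in $\Lcal^{\otimes 2}$, so the paper (and \cite[Section 5]{Po10}) uses the bidegree-$(2,4)$ form $s'=u_0^2\tilde P_\infty(x_0,x_1)+u_1^2\tilde P_0(x_0,x_1)\in\Gamma(\PP^1\times\PP^1,\Ocal(1,2)^{\otimes 2})$ rather than a form linear in $(u:v)$; with that adjustment your argument goes through verbatim.
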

	
	\begin{proof}
		Firstly, we will construct two Ch\^atelet surfaces.
		Let $S\subset \Omega_K$ be the set of all restrictions of  $T$ on $K.$ By Theorem \ref{theorem Chebotarev density theorem}, we can take a finite place $v_0\in \Omega_K^f\backslash  (S\cup  2_K )$ splitting completely in $L.$ 
		For the extension $L/K,$ and $S=\{v_0\},$ let $V_0$ be the Ch\^atelet surface chosen as in Subsubsection \ref{subsubsection Choice of parameters for V2}. Then $V_0$ defined by $y^2-az^2=P_0(x)$ over $K$ having the properties of Proposition \ref{proposition the valuation of Brauer group on local points are fixed outside S and take two value on S}. 	
		Let $P_\infty(x)=(1-x^2)(x^2-a),$ and let $V_\infty$ be the Ch\^atelet surface defined by $y^2-az^2=P_\infty(x).$ By the argument in the proof of Proposition \ref{proposition the valuation of Brauer group on local points are fixed outside S and take two value on S}, two degree-2 irreducible factors of $P_0(x)$ are prime to $x^2-a$ in $K[x].$ 
		So, the polynomials $P_0(x)$ and $P_\infty(x)$ are coprime in $K[x].$ 
		
		Secondly, we will construct a Ch\^atelet surface bundle over a curve.
		Let $\tilde{P}_\infty(x_0,x_1)$ and $\tilde{P}_0(x_0,x_1)$ be the homogenizations of $P_\infty$ and $P_0.$ Let $(u_0:u_1)\times(x_0:x_1)$ be the coordinates of $\PP^1\times\PP^1,$ and let $s'=u_0^2\tilde{P}_\infty(x_0,x_1)+u_1^2\tilde{P}_0(x_0,x_1)\in \Gamma(\PP^1\times\PP^1,\Ocal(1,2)^{\otimes2}).$ For $P_0(x)$ and $P_\infty(x)$ are coprime in $K[x],$ by Jacobian criterion, the locus $Z'$ defined by $s'=0$ in $\PP^1\times\PP^1$ is smooth. Then the branch locus of the composition $ Z'\hookrightarrow \PP^1\times\PP^1  \stackrel{pr_1}\to\PP^1,$ denoted by $R,$ is finite over $K.$ 
		By the assumption that Conjecture \ref{conjecture Stoll} holds over $K,$ and Lemma \ref{lemma stoll conjecture}, we can take a curve $C$ defined over $K$ such that the triple $(C,K,L)$ is of type $I.$ By Lemma \ref{lemma choose base change morphism}, we can choose a $K$-morphism $\gamma\colon  C\to \PP^1$ such that $\gamma(C(L)\backslash C(K))=\{0\}\subset \PP^1(K),$ $\gamma(C(K))=\{\infty\}\subset \PP^1(K),$ and that $\gamma$ is \'etale over $R.$ 
		Let $B=C\times \PP^1.$ Let $\Lcal=(\gamma,id)^*\Ocal(1,2),$ and let $s=(\gamma,id)^* (s')\in \Gamma(B,\Lcal^{\otimes2}).$ For $\gamma$ is \'etale over the branch locus  $R,$  the locus $Z$ defined by $s=0$ in $B$ is smooth. Since $Z$ is defined by the support of the global section $s,$ it is an effective divisor. The invertible sheaf $\Lscr (Z')$ on $\PP^1\times\PP^1$ is isomorphic to $\Ocal(2,4),$ which is a very ample sheaf on $\PP^1\times\PP^1.$ And $(\gamma,id)$ is a finite morphism, so the pull back of this ample sheaf is again ample, which implies that the invertible sheaf $\Lscr (Z)$ on $C\times\PP^1$ is ample. By \cite[Chapter III Corollary 7.9]{Ha97}, the curve $Z$ is geometrically connected. So the curve $Z$ is smooth, projective, and geometrically connected. By Lemma \ref{lemma zero of Br B}, the Brauer group $\Br(\overline{B})=0.$  Let $X$ be the zero locus of $(1,-a,-s)\in\Gamma(B,\Ocal_B\oplus\Ocal_B\oplus\Lcal^{\otimes2})\subset\Gamma(B,\Sym^2\Ecal)$ in the projective space bundle $\Proj(\Ecal)$ with the natural projection $\alpha\colon  X\to B.$	 Using Proposition \ref{Poonen's main proposition}, the variety $X$ is smooth, projective, and geometrically connected.
		Let $\beta\colon X \stackrel{\alpha}\to B=C\times \PP^1 \stackrel{pr_1}\to C$ be the composition of $\alpha$ and $pr_1.$ Then $\beta$ is a Ch\^atelet surface bundle over the curve $C.$
		
		At last, we will check that $X$ has the properties.
		
		We will show that  $X$ has a $K$-rational point. For any $P\in C(K),$ the fiber  $\beta^{-1}(P)\cong V_\infty.$ The surface $V_\infty$ has a $K$-rational point $(x,y,z)=(0,0,1),$ so the set $X(K)\neq \emptyset.$\\
		We will show that  $X$ satisfies weak approximation with Brauer-Manin obstruction 
		off $\infty_K.$ 
		By Remark \ref{remark birational to Hasse-Minkowski theorem}, the surface $V_\infty$ satisfies weak approximation. So, for the morphism $\beta,$ 
		Assumption (\ref{fiber criterion for wabm condition 3}) of Lemma \ref{lemma fiber criterion for wabm} holds.
		Since Conjecture \ref{conjecture Stoll} holds for the curve $C,$ using Lemma \ref{lemma fiber criterion for wabm} for the morphism $\beta,$ the variety $X$ satisfies weak approximation with Brauer-Manin obstruction 
		off $\infty_K.$ 
		
		We will show that  $X_L$ does not satisfy weak approximation with Brauer-Manin obstruction 
		off $T.$	
		By Proposition \ref{Poonen's main proposition}, the map $\alpha_L^*\colon\Br(B_L)\to \Br(X_L)$ is an isomorphism, so $\beta_L^*\colon\Br(C_L)\to \Br(X_L)$ is an isomorphism. By the choice of the curve $C$ and morphism $\beta,$ for any $Q\in C(L)\backslash C(K),$ the fiber $\beta^{-1}(Q)\cong V_{0L}.$ 
		By Proposition \ref{proposition the valuation of Brauer group on local points are fixed outside S and take two value on S property}, the surface $V_{0L}$ does not satisfy weak approximation off $T\cup \infty_L.$ For $V_0(L)\neq\emptyset,$ by Lemma \ref{lemma fiber criterion for not wabm}, the variety $X_L$ does not satisfy weak approximation with Brauer-Manin obstruction 
		off $T\cup \infty_L.$ So it does not satisfy weak approximation with Brauer-Manin obstruction 
		off $T.$
	\end{proof}

	\subsection{Non-invariance of the failures of the Hasse principle explained by the Brauer-Manin obstruction}

	For extensions $L/K$ of the following two cases, assuming Conjecture \ref{conjecture Stoll}, we construct Ch\^atelet surface bundles over curves to give negative answers to Question \ref{question on HP}.

	\begin{theorem}\label{theorem main result: non-invariance of the Hasse principle with BMO for odd degree}
		For any number field $K,$ and any nontrivial field extension $L$ of odd degree over $K,$ assuming that Conjecture \ref{conjecture Stoll} holds over $K,$ there exists a Ch\^atelet surface bundle over a curve: $X\to C$ defined over $K$ such that
		\begin{itemize} 
			\item $X$ is a counterexample to the Hasse principle, and its failure of the Hasse principle is explained by the Brauer-Manin obstruction,
			\item $X_L$ is a counterexample to the Hasse principle, but its failure of the Hasse principle cannot be explained by the Brauer-Manin obstruction.
		\end{itemize}
	\end{theorem}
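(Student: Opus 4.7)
The plan is to follow the Poonen-style template used in the proof of Theorem~\ref{theorem main result: non-invariance of weak approximation with BMO}, swapping the roles of the two fibres so that $V_\infty$ provides the Hasse-principle obstruction over $K$ while $V_0$ supplies Brauer-Manin-trivial adelic points after base change to $L$. By Lemma~\ref{lemma stoll conjecture} fix a triple $(C,K,L)$ of type~$I$, and by Theorem~\ref{theorem Chebotarev density theorem} fix a single finite place $v_0 \in \Omega_K \setminus (\infty_K^c \cup 2_K)$ that splits completely in $L$; set $S = \{v_0\}$, so that $\sharp S_L = [L:K]$ is also odd. Following Subsubsection~\ref{subsubsection Choice of parameters for V1}, build a Ch\^atelet surface $V_\infty$ over $K$ satisfying Proposition~\ref{proposition no local point for S}: $V_\infty(\AA_K^S) \neq \emptyset$ but $V_\infty(K_{v_0}) = \emptyset$, and similarly at $L$-places above $v_0$. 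Following Subsubsection~\ref{subsubsection Choice of parameters for V3}, build $V_0$ with the same parameter $a$ (arranging its defining polynomial $P_0$ coprime to $P_\infty$ as in Remark~\ref{remark irreducible and reducible of polynomial for no local point}) satisfying Proposition~\ref{proposition the valuation of Brauer group on local points are fixed and on S nontrivial}; by Remark~\ref{remark the valuation of Brauer group on local points are fixed and on S nontrivial} and the oddness of $\sharp S$ and $\sharp S_L$, both $V_0$ and $V_{0L}$ are counterexamples to the Hasse principle whose failure is explained by the Brauer-Manin obstruction. Apply Lemma~\ref{lemma choose base change morphism} and the Poonen construction of Proposition~\ref{Poonen's main proposition} exactly as in the proof of Theorem~\ref{theorem main result: non-invariance of weak approximation with BMO} to get a Ch\^atelet surface bundle $\beta \colon X \to C$ over $K$ whose fibre over each $P \in C(K)$ is isomorphic to $V_\infty$ and whose fibre over each $Q \in C(L) \setminus C(K)$ is isomorphic to $V_0$; in particular $\beta^\ast \colon \Br(C) \to \Br(X)$ is an isomorphism.

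On the $K$-side, $X(K) = \emptyset$ because any $K$-point would lie over some $P \in C(K)$ with fibre $V_\infty$, which has no $K_{v_0}$-point. For $X(\AA_K) \neq \emptyset$, use $V_\infty$-fibres over $C(K)$ at $v \notin S$, and at $v_0$ use that $v_0$ splits completely in $L$ to regard some $Q \in C(L) \setminus C(K)$ as a $K_{v_0}$-point of $C$ whose fibre $V_0 \otimes K_{v_0}$ has $K_{v_0}$-points because $V_0(\AA_K) \neq \emptyset$. The main content is $X(\AA_K)^{\Br} = \emptyset$: for any $(R_v) \in X(\AA_K)^{\Br}$, the identification $\Br(X) = \beta^\ast \Br(C)$ forces $(\beta(R_v)) \in C(\AA_K)^{\Br(C)}$; Stoll's conjecture then forces this image to be approximated at every finite place by $C(K)$, in particular at $v_0$ by $\gamma^{-1}(\infty)$. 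A local uniformity argument (see the final paragraph) then shows the fibre over $\beta(R_{v_0})$ still has no $K_{v_0}$-point, contradicting the existence of $R_{v_0}$.

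On the $L$-side, $X_L(L) = \emptyset$: a fibre over $C(K)$ is $V_{\infty L}$ without $L_{v'}$-points for $v' \in S_L$, and a fibre over $C(L) \setminus C(K)$ is $V_{0L}$, itself a counterexample to the Hasse principle over $L$. That $X_L(\AA_L) \neq \emptyset$ is verified exactly as on the $K$-side. To establish $X_L(\AA_L)^{\Br} \neq \emptyset$, fix any $Q \in C(L) \setminus C(K)$ and take the diagonal constant adelic point $(Q)_{v'} \in C_L(\AA_L)$; pairing against any $A' \in \Br(C_L)$ gives $\sum_{v'} \inv_{v'}(A'(Q))$, which is zero by global reciprocity, so $(Q)_{v'} \in C_L(\AA_L)^{\Br(C_L)}$. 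Since the fibre over $Q$ is $V_{0L}$ and $V_{0L}(\AA_L) \neq \emptyset$, this adelic point lifts to an adelic point of $X_L$, and the lift lies in $X_L(\AA_L)^{\Br}$ because, via $\beta_L^\ast$, every Brauer pairing on $X_L$ factors through the constant adelic point on $C_L$.

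The hard step is to make the local obstruction at $v_0$ ``spread'' to a whole $v_0$-adic neighbourhood of $C(K)$ in $C(K_{v_0})$. Concretely, the equation cutting out $X$ near $\gamma^{-1}(\infty) \times \{(1{:}0)\}$ is, up to rescaling by squares, $y^2 - az^2 = \tilde P_\infty(x_0,x_1) + \lambda^2 \tilde P_0(x_0,x_1)$ with $\lambda = u_1/u_0$ small in $K_{v_0}$, and one needs $(a,\, \tilde P_\infty(x_0,x_1) + \lambda^2 \tilde P_0(x_0,x_1))_{v_0} = -1$ for all $(x_0{:}x_1) \in \PP^1(K_{v_0})$ whenever $v_0(\lambda)$ is large enough. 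This follows from the uniform identity $(a, \tilde P_\infty(x_0,x_1))_{v_0} = -1$ on $\PP^1(K_{v_0})$ (built into Proposition~\ref{proposition no local point for S}), compactness of $\PP^1(K_{v_0})$, continuity of the Hilbert symbol modulo squares, and a separate Hensel-style analysis near the zeroes of $\tilde P_\infty$ using Lemmas~\ref{lemma hilbert symbal lifting for odd prime} and~\ref{lemma Hensel lemma for Hilbert symbal}. Once such a neighbourhood exists, Stoll's approximation of $\beta(R_{v_0})$ by $\gamma^{-1}(\infty)$ pushes it inside the neighbourhood, completing the contradiction and hence the proof.
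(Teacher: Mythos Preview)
Your overall strategy matches the paper's proof: build $V_\infty$ via Proposition~\ref{proposition no local point for S} and $V_0$ via Proposition~\ref{proposition the valuation of Brauer group on local points are fixed and on S nontrivial}, glue them into a Ch\^atelet surface bundle $\beta\colon X\to C$ over a curve of type~$I$ using Poonen's construction, and then read off the four required facts from the fibre structure together with Stoll's conjecture. Your use of a single split place $v_0$ for both surfaces is a harmless simplification of the paper's choice of two separate places $v_1,v_2$; everything else lines up.

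The one point where you diverge, and make life much harder for yourself, is the argument that $X(\AA_K)^{\Br}=\emptyset$. You treat Stoll's conjecture as giving only approximation of $(\beta(R_v))$ by $C(K)$ at $v_0$, and then mount a ``spreading'' argument to show that fibres over points of $C(K_{v_0})$ \emph{near} $C(K)$ still have no $K_{v_0}$-points. This detour is unnecessary. Since $(C,K,L)$ is of type~$I$, the set $C(K)$ is \emph{finite}; a finite subset of the Hausdorff space $C(\AA_K)_\bullet$ is closed, so Stoll's conjecture yields the equality $C(\AA_K)_\bullet^{\Br}=C(K)$, hence $pr^{\infty_K}(C(\AA_K)^{\Br})=C(K)$ exactly. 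Thus for any $(R_v)\in X(\AA_K)^{\Br}$ the image $\beta(R_{v_0})$ is literally a point $P\in C(K)$, the fibre $\beta^{-1}(P)$ is literally $V_\infty$, and $V_\infty(K_{v_0})=\emptyset$ gives the contradiction in one line. This is precisely how the paper argues, and it removes your entire final paragraph. (Incidentally, your worry about ``zeroes of $\tilde P_\infty$'' is moot anyway: by Remark~\ref{remark irreducible and reducible of polynomial for no local point} the polynomial $P_\infty$ is irreducible over $K_{v_0}$, so $\tilde P_\infty$ has no zeros on $\PP^1(K_{v_0})$ and your compactness argument would go through without any Hensel analysis---but you do not need it.)
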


	\begin{proof}
		Firstly, we will construct two Ch\^atelet surfaces.
		By Theorem \ref{theorem Chebotarev density theorem}, we can take two different finite places $v_1,v_2\in \Omega_K^f\backslash    2_K $ splitting completely in $L.$ For the extension $L/K,$ and $S=\{v_1,v_2\},$ we choose an element $a$ as in Subsubsection \ref{subsection choose an element a for S}. For the extension $L/K,$ and $S_1=\{v_1\},$ by Remark \ref{remark choose an element a enlarge S 3}, choosing other parameters for the equation (\ref{equation}) as in Subsubsection \ref{subsubsection Choice of parameters for V3}, we have a Ch\^atelet surface $V_0$ defined by $y^2-az^2=P_0(x)$ over $K$ having the properties of Proposition \ref{proposition the valuation of Brauer group on local points are fixed and on S nontrivial}. 
		For the extension $L/K,$ and $S_2=\{v_2\},$ by Remark \ref{remark choose an element a enlarge S 3}, choosing other parameters for the equation (\ref{equation}) as in Subsubsection \ref{subsubsection Choice of parameters for V1}, we have a Ch\^atelet surface $V_\infty$ defined by $y^2-az^2=P_\infty(x)$ over $K$ having the properties of Proposition \ref{proposition no local point for S}.
		By Remark \ref{remark irreducible and reducible of polynomial for no local point}, the polynomial $P_\infty(x)$ is irreducible over $K.$ For $P_0(x)$ is a product of two degree-2 irreducible factors over $K,$ the polynomials $P_0(x)$ and $P_\infty(x)$ are coprime in $K[x].$ 
		
		Secondly, we will construct a Ch\^atelet surface bundle over a curve.
		Let $\tilde{P}_\infty(x_0,x_1)$ and $\tilde{P}_0(x_0,x_1)$ be the homogenizations of $P_\infty$ and $P_0.$ Let $(u_0:u_1)\times(x_0:x_1)$ be the coordinates of $\PP^1\times\PP^1,$ and let $s'=u_0^2\tilde{P}_\infty(x_0,x_1)+u_1^2\tilde{P}_0(x_0,x_1)\in \Gamma(\PP^1\times\PP^1,\Ocal(1,2)^{\otimes2}).$ For $P_0(x)$ and $P_\infty(x)$ are coprime in $K[x],$ by Jacobian criterion, the locus $Z'$ defined by $s'=0$ in $\PP^1\times\PP^1$ is smooth. Then the branch locus of the composition $ Z'\hookrightarrow \PP^1\times\PP^1  \stackrel{pr_1}\to\PP^1,$ denoted by $R,$ is finite over $K.$ 	
		By the assumption that Conjecture \ref{conjecture Stoll} holds over $K,$ and Lemma \ref{lemma stoll conjecture}, we can take a curve $C$ defined over $K$ such that the triple $(C,K,L)$ is of type $I.$ 
		By Lemma \ref{lemma choose base change morphism}, we can choose a $K$-morphism $\gamma\colon  C\to \PP^1$ such that $\gamma(C(L)\backslash C(K))=\{0\}\subset \PP^1(K),$ $\gamma(C(K))=\{\infty\}\subset \PP^1(K),$ and that $\gamma$ is \'etale over $R.$ 
		Let $B=C\times \PP^1.$ Let $\Lcal=(\gamma,id)^*\Ocal(1,2),$ and let $s=(\gamma,id)^* (s')\in \Gamma(B,\Lcal^{\otimes2}).$ By the same argument as in the proof of Theorem \ref{theorem main result: non-invariance of weak approximation with BMO},	
		the locus $Z$ defined by $s=0$ in $B$ is smooth, projective, and geometrically connected; the Brauer group $\Br(\overline{B})=0.$ Let $X$ be the zero locus of $(1,-a,-s)\in\Gamma(B,\Ocal_B\oplus\Ocal_B\oplus\Lcal^{\otimes2})\subset\Gamma(B,\Sym^2\Ecal)$ in the projective space bundle $\Proj(\Ecal)$ with the natural projection $\alpha\colon  X\to B.$ By Proposition \ref{Poonen's main proposition}, the variety $X$ is smooth, projective, and geometrically connected.
		Let $ \beta\colon X \to C$ be the composition of $\alpha$ and $pr_1.$ Then $\beta$ is a Ch\^atelet surface bundle over the curve $C.$
		
		At last, we will check that $X$ has the properties.

		We will show $X(\AA_K)\neq \emptyset.$
		For any $P\in C(K),$ the fiber  $\beta^{-1}(P)\cong V_\infty.$ By Proposition \ref{proposition no local point for S}, the set $V_\infty(\AA_K^{\{v_2\}})\neq \emptyset.$ So $X(\AA_K^{\{v_2\}})\neq \emptyset.$ For $v_2$ splits completely in $L,$ take a place $v_2'\in \Omega_L^f$ above $v_2,$ i.e. $v_2'|v_2$ in $L.$ Then $K_{v_2}=L_{v_2'}.$ By Proposition \ref{proposition the valuation of Brauer group on local points are fixed and on S nontrivial}, the set $V_0(\AA_L)\neq \emptyset.$ Take a point $Q\in C(L)\backslash C(K),$ then the fiber $\beta^{-1}(Q)\cong V_{0L}.$	
		We have  $X(K_{v_2})=X_L(L_{v_2'})\supset \beta^{-1}(Q)(L_{v_2'})\cong V_0((L_{v_2'})\neq \emptyset.$ So $X(\AA_K)\neq \emptyset.$\\
		We will show $X(\AA_K)^{\Br}=\emptyset.$	 By Conjecture \ref{conjecture Stoll}, the set $C(K)$ is finite, and $C(K)=pr^{\infty_K}(C(\AA_K)^{\Br}).$ By the functoriality of Brauer-Manin
		pairing, we have  $pr^{\infty_K}(X(\AA_K)^{\Br})\subset  \bigsqcup_{P\in C(K)}\beta^{-1}(P)(\AA_K^{\infty_K}).$ But by Proposition \ref{proposition no local point for S}, the set $V_\infty(K_{v_2})=\emptyset,$ so we have
		$pr^{\infty_K}(X(\AA_K)^{\Br})\subset \bigsqcup_{P\in C(K)}\beta^{-1}(P)(\AA_K^{\infty_K})\cong V_\infty(\AA_K^{\infty_K})\times C(K)=\emptyset,$ which implies that $ X(\AA_K)^{\Br}=\emptyset.$ \\
		So, the variety $X$ is a counterexample to the Hasse principle, and its failure of the Hasse principle is explained by the Brauer-Manin obstruction.
		
		We will show $X_L(\AA_L)^{\Br}\neq \emptyset.$
		By Proposition \ref{Poonen's main proposition}, the map $\alpha_L^*\colon\Br(B_L)\to \Br(X_L)$ is an isomorphism, so $\beta_L^*\colon\Br(C_L)\to \Br(X_L)$ is an isomorphism. By the functoriality of Brauer-Manin pairing, the set  $X_L(\AA_L)^{\Br}$ contains
		$\bigsqcup_{Q\in C(L)\backslash C(K) }\beta^{-1}(Q)(\AA_L)\cong V_0(\AA_L)\times (C(L)\backslash C(K)),$ which is nonempty.\\
		We will show $X(L)=\emptyset.$ By the assumption that the degree $[L:K]$ is odd, and $v_1$ splitting completely in $L,$  the number $\sharp S_{1L}$ is odd. By Proposition \ref{proposition the valuation of Brauer group on local points are fixed and on S nontrivial} and the global reciprocity law explained in Remark \ref{remark the valuation of Brauer group on local points are fixed and on S nontrivial}, the set $V_0(L)=\emptyset.$ By Proposition \ref{proposition no local point for S}, the set $V_\infty(\AA_L)=\emptyset.$ Since each $L$-rational fiber of $\beta$ is isomorphic to $V_{0L}$ or $V_{\infty L},$ the set $X(L)=\emptyset.$	\\
		So,
		the variety $X_L$ is a counterexample to the Hasse principle, but its failure of the Hasse principle cannot be explained by the Brauer-Manin obstruction.
	\end{proof}

	In the case when both fields $K$ and $L$ have real places, making use of these real place information, and assuming the Stoll's conjecture, we have the following theorem to give a negative answer to Question \ref{question on HP}

	\begin{theorem}\label{theorem main result: non-invariance of the Hasse principle with BMO for exist real place}
		For any number field $K$ having a real place, and any nontrivial field extension $L/K$ having a real place, assuming that Conjecture \ref{conjecture Stoll} holds over $K,$  there exists a Ch\^atelet surface bundle over a curve: $X\to C$ defined over $K$ such that
		\begin{itemize}
			\item $X$ is a counterexample to the Hasse principle, and its failure of the Hasse principle is explained by the Brauer-Manin obstruction,
			\item $X_L$ is a counterexample to the Hasse principle, but its failure of the Hasse principle cannot be explained by the Brauer-Manin obstruction.
		\end{itemize}
	\end{theorem}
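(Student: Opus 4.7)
My plan mirrors the proof of Theorem \ref{theorem main result: non-invariance of the Hasse principle with BMO for odd degree}, with the arithmetic parity argument there replaced by an archimedean connectedness argument supplied by the type $II$ hypothesis. First, using the construction of Subsubsection \ref{subsubsection Choice of parameters for V1}, I build two Ch\^atelet surfaces over $K$ sharing a common parameter $a$. By Theorem \ref{theorem Chebotarev density theorem} pick a finite place $v_2 \in \Omega_K^f \setminus 2_K$ splitting completely in $L$; using the hypothesis that $L$ has a real place, pick a real $v_1 \in \infty_K^r$ admitting a real extension $v_1' \in \infty_L^r$. Applying Proposition \ref{proposition no local point for S} twice with the same $a$ and with $S = \{v_2\}$ resp.\ $S = \{v_1\}$, and varying the free parameter $c$ so that the resulting polynomials $P_\infty$ and $P_0$ are coprime in $K[x]$ (cf.\ Remark \ref{remark irreducible and reducible of polynomial for no local point}), I obtain Ch\^atelet surfaces $V_\infty$ with $V_\infty(K_{v_2}) = \emptyset$ and $V_0$ with $V_0(\RR) = V_0(K_{v_1}) = \emptyset$, both having local points at every other place of $K$.

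By Lemma \ref{lemma stoll conjecture} I take a curve $C/K$ such that $(C,K,L)$ is of type $II$; the explicit construction there actually makes $C(L_w)$ connected at \emph{every} archimedean $w \in \infty_L$, since the defining equation has the form $w_2^m = g(w_0,w_1)$ with $m$ odd, so $C(\RR) \cong \PP^1(\RR)$ is connected and complex places are connected trivially. I then pick $\gamma\colon C \to \PP^1$ via Lemma \ref{lemma choose base change morphism} sending $C(K) \mapsto \infty$ and $C(L) \setminus C(K) \mapsto 0$, and build the Ch\^atelet surface bundle $\beta\colon X \to C$ via Poonen's Proposition \ref{Poonen's main proposition}, exactly as in the proof of the odd-degree theorem. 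By construction $\beta^{-1}(P) \cong V_\infty$ for $P \in C(K)$, $\beta^{-1}(Q) \cong V_{0L}$ for $Q \in C(L) \setminus C(K)$, and $\beta_L^*\colon \Br(C_L) \to \Br(X_L)$ is an isomorphism.

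The verifications on the $K$-side go exactly as in Theorem \ref{theorem main result: non-invariance of the Hasse principle with BMO for odd degree}: $X(\AA_K) \neq \emptyset$ is checked place by place, using $V_0$-fibers at $v_2$ (over the image of some $Q \in C(L) \setminus C(K)$ in $C(K_{v_2})$, which exists because $v_2$ splits in $L$) and $V_\infty$-fibers elsewhere; $X(\AA_K)^{\Br} = \emptyset$ follows from $V_\infty(K_{v_2}) = \emptyset$ together with Conjecture \ref{conjecture Stoll} for $C$ via the inclusion $pr^{\infty_K}(X(\AA_K)^{\Br}) \subset V_\infty(\AA_K^{\infty_K}) \times C(K)$. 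Furthermore $X(L) = \emptyset$ because every $L$-fiber of $\beta$ is either $V_{\infty L}$, with $V_\infty(L_{v_2'}) = V_\infty(K_{v_2}) = \emptyset$ for $v_2' | v_2$ (as $v_2$ splits completely in $L$), or $V_{0L}$, with $V_0(L_{v_1'}) = V_0(\RR) = \emptyset$.

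The only new step, and the principal use of type $II$, is the verification that $X_L(\AA_L)^{\Br} \neq \emptyset$. Fix $Q \in C(L) \setminus C(K)$ and construct an adelic point $(P_w) \in X_L(\AA_L)$ as follows: at every $w \in \Omega_L$ for which $V_{0L}(L_w) \neq \emptyset$ (all finite $w$ and all archimedean $w$ not restricting to $v_1$), take $\beta(P_w) = Q$ and $P_w \in V_{0L}(L_w)$; at each real $w \in \infty_L^r$ with $w | v_1$ (where $V_{0L}(L_w) = V_0(\RR) = \emptyset$), exploit the connectedness of $C(L_w)$ to pick $P_w \in V_\infty(L_w) = V_\infty(\RR) \neq \emptyset$ lying over $O \in C(K) \subset C(L_w)$, which is in the unique connected component of $C(L_w)$ containing $Q$. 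Since $\inv_w \circ A_C$ is locally constant on $C(L_w)$ and each $C(L_w)$ is connected at every archimedean $w$, one has $\inv_w((\beta_L^* A_C)(P_w)) = \inv_w(A_C(Q))$ for every $w \in \Omega_L$ and every $A_C \in \Br(C_L)$; summing and applying global reciprocity to $A_C(Q) \in \Br(L)$ gives $\sum_w \inv_w((\beta_L^* A_C)(P_w)) = 0$, so $(P_w) \in X_L(\AA_L)^{\Br}$. I expect the main obstacle to be the uniform bookkeeping of local nonemptiness of the $V_\infty$- and $V_{0L}$-fibers at each relevant place of $L$, which ultimately reduces to a careful inspection of the local analysis behind Proposition \ref{proposition no local point for S}.
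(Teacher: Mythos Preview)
Your proposal is correct and follows essentially the same route as the paper's own proof: the same two Ch\^atelet surfaces (one locally empty at a split finite place, one locally empty at a real place lying under a real place of $L$), the same Poonen bundle construction over a type~$II$ curve, and the same connectedness argument at the archimedean fibers to produce a point of $X_L(\AA_L)^{\Br}$. One small bookkeeping point: when you invoke Proposition~\ref{proposition no local point for S} for $V_0$ with $S=\{v_1\}$, the splitting hypothesis in that proposition need not hold for $v_1$ in $L$; the paper handles this by applying the construction with the trivial extension $K/K$ (cf.\ Remark~\ref{remark choose an element a enlarge S 3} for sharing the same $a$), since only the $K$-level conclusion $V_0(K_{v_1})=\emptyset$ is needed, and $V_0(L_{v_1'})=V_0(K_{v_1})$ then follows automatically from $L_{v_1'}=K_{v_1}$.
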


	\begin{proof}
		Firstly, we will construct two Ch\^atelet surfaces.
		Take a real place $v_0'$ of $L,$ and let $v_0\in \infty_K$ be the restriction of  $v_0'$ on $K.$ 
		By Theorem \ref{theorem Chebotarev density theorem}, we can take a finite place $v_1\in \Omega_K^f\backslash    2_K $ splitting completely in $L.$ 	
		For the extension $L/K,$ and $S=\{v_0,v_1\},$ we choose an element $a$ as in Subsubsection \ref{subsection choose an element a for S}. For the trivial extension $K/K$ (respectively the extension $L/K$), and $S_1=\{v_0\}$ (respectively $S_2=\{v_1\}$), by Remark \ref{remark choose an element a enlarge S 3}, choosing other parameters for the equation (\ref{equation}) as in Subsubsection \ref{subsubsection Choice of parameters for V1}, we have a Ch\^atelet surface $V_0$ (respectively $V_\infty$) defined by $y^2-az^2=P_0(x)$ (respectively by $y^2-az^2=P_\infty(x)$) over $K$ having the properties of Proposition \ref{proposition no local point for S}.
		By Remark \ref{remark irreducible and reducible of polynomial for no local point},	
		the polynomial $P_\infty(x)$ is irreducible over $K_{v_1}.$ If the polynomials
		$P_0(x)$ and $P_\infty(x)$ are not coprime, then $P_0(x)=\lambda P_\infty(x)$ for some $\lambda\in K^\times.$ By Remark \ref{remark irreducible and reducible of polynomial for no local point}, we choose another $P_\infty(x)'$ to replace $P_\infty(x)$ so that the new polynomial $P_\infty(x)$ is prime to  $P_0(x).$  
		
		Secondly, we will construct a Ch\^atelet surface bundle over a curve.
		Let $\tilde{P}_\infty(x_0,x_1)$ and $\tilde{P}_0(x_0,x_1)$ be the homogenizations of $P_\infty$ and $P_0.$ Let $(u_0:u_1)\times(x_0:x_1)$ be the coordinates of $\PP^1\times\PP^1,$ and let $s'=u_0^2\tilde{P}_\infty(x_0,x_1)+u_1^2\tilde{P}_0(x_0,x_1)\in \Gamma(\PP^1\times\PP^1,\Ocal(1,2)^{\otimes2}).$ For $P_0(x)$ and $P_\infty(x)$ are coprime in $K[x],$ by Jacobian criterion, the locus $Z'$ defined by $s'=0$ in $\PP^1\times\PP^1$ is smooth. Then the branch locus of the composition $ Z'\hookrightarrow \PP^1\times\PP^1  \stackrel{pr_1}\to\PP^1,$ denoted by $R,$ is finite over $K.$ 
		By the assumptions that Conjecture \ref{conjecture Stoll} holds over $K,$ and that the field $L$ has a real place, using Lemma \ref{lemma stoll conjecture}, we can take a curve $C$ defined over $K$ such that the triple $(C,K,L)$ is of type $II.$ 
		By Lemma \ref{lemma choose base change morphism}, we can choose a $K$-morphism $\gamma\colon  C\to \PP^1$ such that $\gamma(C(L)\backslash C(K))=\{0\}\subset \PP^1(K),$ $\gamma(C(K))=\{\infty\}\subset \PP^1(K),$ and that $\gamma$ is \'etale over $R.$ 
		Let $B=C\times \PP^1.$ Let $\Lcal=(\gamma,id)^*\Ocal(1,2),$ and let $s=(\gamma,id)^* (s')\in \Gamma(B,\Lcal^{\otimes2}).$ By the same argument as in the proof of Theorem \ref{theorem main result: non-invariance of weak approximation with BMO},	
		the locus $Z$ defined by $s=0$ in $B$ is smooth, projective, and geometrically connected; the Brauer group $\Br(\overline{B})=0.$  Let $X$ be the zero locus of $(1,-a,-s)\in\Gamma(B,\Ocal_B\oplus\Ocal_B\oplus\Lcal^{\otimes2})\subset\Gamma(B,\Sym^2\Ecal)$ in the projective space bundle $\Proj(\Ecal)$ with the natural projection $\alpha\colon  X\to B.$  By the same argument as in the proof of Theorem \ref{theorem main result: non-invariance of the Hasse principle with BMO for odd degree}, the variety $X$ is smooth, projective, and geometrically connected.
		Let $ \beta\colon X \to C$ be the composition of $\alpha$ and $pr_1.$ Then $\beta$ is a Ch\^atelet surface bundle over the curve $C.$
		
		At last, we will check that $X$ has the properties.
		
		We will show $X(\AA_K)\neq \emptyset.$ For any $P\in C(K),$ the fiber  $\beta^{-1}(P)\cong V_\infty.$ By Proposition \ref{proposition no local point for S}, 
		the set $V_\infty(\AA_K^{\{v_1\}})\neq \emptyset.$ 
		So $X(\AA_K^{\{v_1\}})\neq \emptyset.$ For $v_1$ splits completely in $L,$ take a place $v_1'\in \Omega_L^f$ above $v_1,$ i.e. $v_1'|v_1$ in $L.$ Then $K_{v_1}=L_{v_1'}.$ By Proposition \ref{proposition no local point for S}, the set $V_0(K_{v_1})\neq \emptyset.$ 
		Take a point $Q\in C(L)\backslash C(K),$ then the fiber $\beta^{-1}(Q)\cong V_{0L}.$	
		We have  $X(K_{v_1})=X_L(L_{v_1'})\supset \beta^{-1}(Q)(L_{v_1'})\cong V_0(L_{v_1'})\neq \emptyset.$ 
		So $X(\AA_K)\neq \emptyset.$\\
		By the same argument as in the proof of Theorem \ref{theorem main result: non-invariance of the Hasse principle with BMO for odd degree}, the set $X(\AA_K)^{\Br}=\emptyset.$ So, the variety $X$ is a counterexample to the Hasse principle, and its failure of the Hasse principle is explained by the Brauer-Manin obstruction.
		

		We will show $X_L(\AA_L)^{\Br}\neq \emptyset.$
		By Proposition \ref{proposition no local point for S}, the set $V_0(\AA_K^{\{v_0\}})\neq \emptyset$ and $V_\infty(K_{v_0})\neq \emptyset.$  By Proposition \ref{Poonen's main proposition}, the map $\beta_L^*\colon\Br(C_L)\to \Br(X_L)$ is an isomorphism. By our choice, the space $C(K_{v_0})\cong C(L_{v_0'})\cong C(\RR)$ is connected.  For any $A\in\Br(C_L),$ since the evaluation of $A$ on $C(L_{v'})$ is locally constant for all $v' \in \Omega_L,$ it is constant on $C(K_{v_0}),$ so it is constant on $C(L_{v'})$ for all $v'\in S_{1L}.$	Take points $P\in C(K)$ and $Q\in C(L)\backslash C(K).$
		By the functoriality of Brauer-Manin pairing and isomorphism of $\beta_L^*\colon\Br(C_L)\to \Br(X_L),$ the set  $X_L(\AA_L)^{\Br}\supset 
		\beta^{-1}(Q)(\AA_L^{S_{1L}})\times \prod_{v'\in S_{1L}}\beta^{-1}(P)(L_{v'})\cong V_0(\AA_L^{S_{1L}})\times \prod_{v'\in S_{1L}}V_\infty(L_{v'})\neq \emptyset.$\\
		We will show $X(L)=\emptyset.$ By Proposition \ref{proposition no local point for S}, the set $V_0(K_{v_0})=\emptyset,$ so the set $V_0(L_{v_0'})=V_0(K_{v_0})=\emptyset.$ By Proposition \ref{proposition no local point for S}, 
		the set $V_\infty(\AA_L)=\emptyset.$ Since each $L$-rational fiber of $\beta$ is isomorphic to $V_{0L}$ or $V_{\infty L},$ the set $X(L)=\emptyset.$\\
		So, 
		the variety $X$ is a counterexample to the Hasse principle, but its failure of the Hasse principle cannot be explained by the Brauer-Manin obstruction.
	\end{proof}


	\section{Explicit unconditional examples}

	Firstly, we will give explicit examples without assuming Conjecture \ref{conjecture Stoll} for Theorem \ref{theorem main result: non-invariance of weak approximation with BMO}, Theorem \ref{theorem main result: non-invariance of the Hasse principle with BMO for odd degree} and Theorem \ref{theorem main result: non-invariance of the Hasse principle with BMO for exist real place}. Secondly, when $K=\QQ$ and $L=\QQ(i),$ besides cases of Theorem \ref{theorem main result: non-invariance of the Hasse principle with BMO for odd degree} and Theorem \ref{theorem main result: non-invariance of the Hasse principle with BMO for exist real place}, we construct an explicit Ch\^atelet surface bundle over a curve in Subsection \ref{subsection main example4} to give a negative answer to Question \ref{question on HP}.

	\subsection{An explicit unconditional example for Theorem \ref{theorem main result: non-invariance of weak approximation with BMO}}\label{subsection main example1}
	
	In the subsection,
	let $K=\QQ$ and $L=\QQ(\sqrt{3}).$ We will construct an explicit Ch\^atelet surface bundle over a curve having properties of Theorem \ref{theorem main result: non-invariance of weak approximation with BMO}.

	\subsubsection{Choosing an elliptic curve} 
	Let $E$ be an elliptic curve defined over $\QQ$ by a homogeneous equation:
	$$w_1^2w_2=w_0^3-16w_2^3$$ with homogeneous coordinates $(w_0:w_1:w_2)\in \PP^2.$ This is an elliptic curve with complex multiplication. Its quadratic twist $E^{(3)}$ is isomorphic to an elliptic curve defined by a homogeneous equation:
	$w_1^2w_2=w_0^3-432w_2^3$ with homogeneous coordinates $(w_0:w_1:w_2)\in \PP^2.$ These elliptic curves $E$ and $E^{(3)}$ defined over $\QQ,$ are  of analytic rank $0.$ Then the Tate-Shafarevich group $\Sha(E,\QQ)$ is finite, so $E$ satisfies weak approximation with Brauer-Manin obstruction off $\infty_K.$ The Mordell-Weil groups $E(K)$ and $E^{(3)}(K)$ are finite, so $E(L)$ is finite. Indeed, the Mordell-Weil groups $E(K)=\{(0:1:0)\}$ and $E(L)=\{(4:\pm 4\sqrt{3}:1),(0:1:0)\}.$ So the triple $(E,K,L)$ is of type $I.$


	\subsubsection{Choosing a dominant morphism}
	Let $\PP^2\backslash\{(0:1:0)\}\to \PP^1$ be a morphism over $\QQ$ given by $(w_0:w_1:w_2)\mapsto(w_0-4w_2:w_2).$ Composite with the natural inclusion $E\backslash\{(0:1:0)\}\hookrightarrow \PP^2\backslash\{(0:1:0)\}.$ We get a morphism $E\backslash\{(0:1:0)\}\to \PP^1,$ which can be extended to a dominant morphism $\gamma\colon E\to \PP^1$  of degree $2.$ The morphism $\gamma$ maps $E(K)$ to $\{\infty\}=\{(1:0)\},$ and maps $(4:\pm 4\sqrt{3}:1)$ to $0$ point: $(0:1).$	By B\'ezout's Theorem \cite[Chapter I. Corollary 7.8]{Ha97} or Hurwitz's Theorem \cite[Chapter IV. Corollary 2.4]{Ha97}, the branch locus of $\gamma$ is $\{(1:0),(2\sqrt[3]{2}-4:1),(2\sqrt[3]{2}e^{2\pi i/3}-4:1),(2\sqrt[3]{2}e^{-2\pi i/3}-4:1)\}.$

	\subsubsection{Construction of a Ch\^atelet surface bundle}

	Let $P_\infty(x)=(1-x^2)(x^2-73),$ and let $P_0(x)=(99x^2+1)(5428x^2/5329+1/5329).$ Notice that these polynomials $P_\infty$ and $P_0$ are separable.
	Let $V_\infty$ be the Ch\^atelet surface given by $y^2-73z^2=P_\infty(x).$ As mentioned in Example \ref{example1: construction of V_0}, let $V_0$ be the Ch\^atelet surface given by $y^2-73z^2=P_0(x).$ 
	Let $\tilde{P}_\infty(x_0,x_1)$ and $\tilde{P}_0(x_0,x_1)$ be the homogenizations of $P_\infty$ and $P_0.$ Let $(u_0:u_1)\times(x_0:x_1)$ be the coordinates of $\PP^1\times\PP^1,$ and let $s'=u_0^2\tilde{P}_\infty(x_0,x_1)+u_1^2\tilde{P}_0(x_0,x_1)\in \Gamma(\PP^1\times\PP^1,\Ocal(1,2)^{\otimes2}).$ For $P_0(x)$ and $P_\infty(x)$ are coprime in $K[x],$ by Jacobian criterion, the locus $Z'$ defined by $s'=0$ in $\PP^1\times\PP^1$ is smooth. Then the branch locus of the composition $ Z'\hookrightarrow \PP^1\times\PP^1  \stackrel{pr_1}\to\PP^1,$ denoted by $R,$ is finite, and contained in $\PP^1\backslash \{(1:0)\}.$
	Let $B=E\times \PP^1.$ Let $\Lcal=(\gamma,id)^*\Ocal(1,2),$ and let $s=(\gamma,id)^* (s')\in \Gamma(B,\Lcal^{\otimes2}).$ With these notations, we have the following lemma.

	\begin{lemma}
		The curve $Z$ defined by $s=0$ in $B$ is smooth, projective, and geometrically connected.
	\end{lemma}
	
	\begin{proof}
		For smoothness of $Z,$ we need to check that the branch locus $R$ 
		does not intersect with the branch locus of  $\gamma\colon E\to \PP^1$. 
		For $R$ is contained in $\PP^1\backslash \{(1:0)\},$ 
		we can assume the homogeneous coordinate $u_1=1,$ then the point in $R$ satisfies one of the following equations: $
		5329u_0^2-537372=0,~ 389017u_0^2 - 1=0,~27625536u_0^4 +157730624u_0^2 + 5329=0.$ The polynomials of these equations are irreducible over $\QQ.$  By comparing the degree $[\QQ(u_0):\QQ]$ with the branch locus of $\gamma,$ we get the conclusion that these two branch loci do not intersect.
		The same argument as in the proof of Theorem \ref{theorem main result: non-invariance of weak approximation with BMO},	
		the locus $Z$ defined by $s=0$ in $B$ is geometrically connected. So it is smooth, projective, and geometrically connected.
	\end{proof}

	Let $X$ be the zero locus of $(1,-a,-s)\in\Gamma(B,\Ocal_B\oplus\Ocal_B\oplus\Lcal^{\otimes2})\subset\Gamma(B,\Sym^2\Ecal)$ in the projective space bundle $\Proj(\Ecal)$ with the natural projection $\alpha\colon  X\to B.$  By the same argument as in the proof of Theorem \ref{theorem main result: non-invariance of the Hasse principle with BMO for odd degree}, the variety $X$ is smooth, projective, and geometrically connected. Let $ \beta\colon X \to E$ be the composition of $\alpha$ and $pr_1.$ Then it is a Ch\^atelet surface bundle over the curve $E.$ For this $X,$ we have the following proposition.

	\begin{prop}\label{example1: main result: non-invariance of weak approximation with BMO}
		For $K=\QQ$ and $L=\QQ(\sqrt{3}),$  the $3$-fold $X$ has the following properties.	
		\begin{itemize}
			\item $X$ has a $K$-rational point, and satisfies weak approximation with Brauer-Manin obstruction 
			off $\infty_K.$
			\item $X_L$ does not satisfy  weak approximation with Brauer-Manin obstruction 
			off $\infty_L.$
		\end{itemize}
	\end{prop}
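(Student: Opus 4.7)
The plan is to recognize the construction as an explicit unconditional instance of the strategy behind Theorem \ref{theorem main result: non-invariance of weak approximation with BMO}, with Conjecture \ref{conjecture Stoll} replaced by the unconditional finiteness of $\Sha(E,\QQ)$. First I would observe that $E$ is a CM elliptic curve of analytic rank $0$, and its quadratic twist $E^{(3)}$ also has analytic rank $0$; by Kolyvagin-Gross-Zagier both $\Sha(E,\QQ)$ and $\Sha(E^{(3)},\QQ)$ are finite. This unconditionally yields that $E$ satisfies weak approximation with Brauer-Manin obstruction off $\infty_K$, and that $E(K)=\{O\}$ and $E(L)=\{O,(4{:}{\pm}4\sqrt{3}{:}1)\}$ are finite with $E(L)\neq E(K)$, so the triple $(E,K,L)$ is of type $I$. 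The morphism $\gamma$ was explicitly chosen so that $\gamma(E(K))=\{\infty\}$ and $\gamma(E(L)\setminus E(K))=\{0\}$, and $\gamma$ is \'etale over the branch locus $R$ by the preceding lemma; this is exactly the data used in the proof of Theorem \ref{theorem main result: non-invariance of weak approximation with BMO}.

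For the first bullet, I would apply Lemma \ref{lemma fiber criterion for wabm} to $\beta\colon X\to E$ with $S=\infty_K$. Condition (1) holds since $E(K)=\{O\}$, condition (2) holds by the finiteness of $\Sha(E,\QQ)$, and condition (3) reduces to a single fiber: $\beta^{-1}(O)\cong V_\infty$ cut out by $y^2-73z^2=(1-x^2)(x^2-73)$. This fiber visibly contains the $K$-point $(x,y,z)=(0,0,1)$, giving the $K$-rational point on $X$. Moreover $V_\infty$ satisfies weak approximation by Remark \ref{remark birational to Hasse-Minkowski theorem} applied with $a=73$, since the polynomial $P_\infty(x)$ contains the factor $x^2-73=x^2-a$. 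Thus Lemma \ref{lemma fiber criterion for wabm} gives weak approximation with Brauer-Manin obstruction off $\infty_K$.

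For the second bullet, I would apply Lemma \ref{lemma fiber criterion for not wabm} to $\beta_L\colon X_L\to E_L$ with $S=\infty_L$. Condition (1) holds as $E(L)$ is finite; condition (2) follows because Proposition \ref{Poonen's main proposition} gives that $\alpha_L^*\colon\Br(B_L)\to\Br(X_L)$ is an isomorphism, hence $\beta_L^*\colon\Br(E_L)\to\Br(X_L)$ is an isomorphism, and in particular surjective. For condition (3), take $Q=(4{:}4\sqrt{3}{:}1)\in E(L)\setminus E(K)$; then $\beta^{-1}(Q)\cong V_{0L}$, the base change of the Ch\^atelet surface of Example \ref{example1: construction of V_0}. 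The fiber possesses an $L$-point (coming from the $K$-point of $V_0$), so $\prod_{v'\in\infty_L}V_{0L}(L_{v'})\neq\emptyset$.

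The only genuine verification here is that the failure of weak approximation for $V_{0L}$ indeed occurs off $\infty_L$, which is the heart of the proposition. For this I would invoke Proposition \ref{proposition the valuation of Brauer group on local points are fixed outside S and take two value on S property} applied to $V_0$ with $S=\{73\}$. By quadratic reciprocity, $3$ is a quadratic residue modulo $73$ (both congruent to $1$ modulo the relevant conductors), so $73$ splits completely in $L=\QQ(\sqrt 3)$ and $S_L$ consists of two finite places; in particular $S_L\cap\infty_L=\emptyset$. Proposition \ref{proposition the valuation of Brauer group on local points are fixed outside S and take two value on S property} then yields that $V_{0L}$ does not satisfy weak approximation off $\infty_L$, and Lemma \ref{lemma fiber criterion for not wabm} delivers the desired failure for $X_L$. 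No step requires new ideas beyond those of Theorem \ref{theorem main result: non-invariance of weak approximation with BMO}; the main obstacle is simply the bookkeeping that the explicit data ($E$ CM with rank $0$, choice of $\gamma$, splitting of $73$ in $\QQ(\sqrt 3)$) meet every hypothesis of the abstract argument.
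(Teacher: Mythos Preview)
Your proposal is correct and follows exactly the approach of the paper, which simply states that the proof is the same as that of Theorem \ref{theorem main result: non-invariance of weak approximation with BMO}; you have merely spelled out in detail how Lemmas \ref{lemma fiber criterion for wabm} and \ref{lemma fiber criterion for not wabm}, Remark \ref{remark birational to Hasse-Minkowski theorem}, Proposition \ref{Poonen's main proposition}, and Proposition \ref{proposition the valuation of Brauer group on local points are fixed outside S and take two value on S property} apply to the explicit data, with the Stoll conjecture replaced by the unconditional finiteness of $\Sha(E,\QQ)$ via Kolyvagin--Gross--Zagier.
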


	\begin{proof}
		This is the same as in the proof of Theorem \ref{theorem main result: non-invariance of weak approximation with BMO}.
	\end{proof}

	The $3$-fold $X$ that we constructed, has an affine open subvariety defined by the following equations, which is a closed subvariety of $\AA^5$ with affine coordinates $(x,y,z,x',y').$
	\begin{equation*}
		\begin{cases}
			y^2-73z^2=(1-x^2)(x^2-73)(x'-4)^2+(99x^2+1)(5428x^2/5329+1/5329)\\ 
			{y'}^2={x'}^3-16
		\end{cases}
	\end{equation*}
	
	%
	%

	\subsection{An explicit unconditional example for Theorem \ref{theorem main result: non-invariance of the Hasse principle with BMO for odd degree}}\label{subsection main example2}
	
	In the subsection,
	let $K=\QQ,$ and let $\zeta_7$ be a primitive $7$-th root of unity. Let $\alpha=\zeta_7+\zeta_7^{-1}$ with the minimal polynomial $x^3+x^2-2x-1.$ Let  $L=\QQ(\alpha).$  Then the degree $[L:K]=3.$  We will construct an explicit Ch\^atelet surface bundle over a curve having properties of Theorem \ref{theorem main result: non-invariance of the Hasse principle with BMO for odd degree}.

	\subsubsection{Choosing an elliptic curve} 
	
	Let $E$ be an elliptic curve defined over $\QQ$ by a homogeneous equation:
	$$w_1^2w_2=w_0^3-343w_0w_2^2-2401w_2^3$$ with homogeneous coordinates $(w_0:w_1:w_2)\in \PP^2.$ This elliptic curve defined over $\QQ,$ is of analytic rank $0,$ so it satisfies weak approximation with Brauer-Manin obstruction off $\infty_K.$ By computer calculation, we have the Mordell-Weil groups $E(K)=\{(0:1:0)\}$ and $E(L)=\{(7\alpha^2+14\alpha-7:0:1),(7\alpha^2-7\alpha-14:0:1),(-14\alpha^2-7\alpha+21:0:1),(0:1:0)\}.$ 
	So the triple $(E,K,L)$ is of type $I.$

	\subsubsection{Choosing a dominant morphism}
	Let $\PP^2\backslash\{(1:0:0)\}\to \PP^1$ be a morphism over $\QQ$ given by $(w_0:w_1:w_2)\mapsto(w_1:w_2).$ Composite with the natural inclusion $E\hookrightarrow \PP^2\backslash\{(1:0:0)\}.$ We get a morphism $\gamma\colon E\to \PP^1,$ which is a dominant morphism of degree $3.$ The morphism $\gamma$ maps $E(K)$ to $\{\infty\}=\{(1:0)\},$ and maps $E(L)\backslash E(K)$ to $\{0\}=\{(0:1)\}.$
	By B\'ezout's Theorem \cite[Chapter I. Corollary 7.8]{Ha97} or Hurwitz's Theorem \cite[Chapter IV. Corollary 2.4]{Ha97}, the branch locus of $\gamma$ is $\{(1:0)\}\bigcup \{(u_0:1)|27u_0^4 + 129654u_0^2 - 5764801=0\}.$

	\subsubsection{Construction of a Ch\^atelet surface bundle}

	Let $P_\infty(x)=14(x^4-89726),$ and let $P_0(x)=(x^2-878755181)(5x^2-4393775906).$ Notice that these polynomials $P_\infty$ and $P_0$ are separable.  As mentioned in Example \ref{example2: construction of V_infty} and Example \ref{example2: construction of V_0}, let $V_\infty$ be the Ch\^atelet surface given by $y^2-377z^2=P_\infty(x),$ and let $V_0$ be the Ch\^atelet surface given by $y^2-377z^2=P_0(x).$ 
	Let $\tilde{P}_\infty(x_0,x_1)$ and $\tilde{P}_0(x_0,x_1)$ be the homogenizations of $P_\infty$ and $P_0.$ Let $(u_0:u_1)\times(x_0:x_1)$ be the coordinates of $\PP^1\times\PP^1,$ and let $s'=u_0^2\tilde{P}_\infty(x_0,x_1)+u_1^2\tilde{P}_0(x_0,x_1)\in \Gamma(\PP^1\times\PP^1,\Ocal(1,2)^{\otimes2}).$ For $P_0(x)$ and $P_\infty(x)$ are coprime in $K[x],$ by Jacobian criterion, the locus $Z'$ defined by $s'=0$ in $\PP^1\times\PP^1$ is smooth. Then the branch locus of the composition $Z'\hookrightarrow \PP^1\times\PP^1  \stackrel{pr_1}\to\PP^1,$ denoted by $R,$ is finite and contained in $\PP^1\backslash \{(1:0)\}.$
	Let $B=E\times \PP^1.$ Let $\Lcal=(\gamma,id)^*\Ocal(1,2),$ and let $s=(\gamma,id)^* (s')\in \Gamma(B,\Lcal^{\otimes2}).$ With these notations, we have the following lemma.

	\begin{lemma}
		The curve $Z$ defined by $s=0$ in $B$ is smooth, projective, and geometrically connected.
	\end{lemma}
	
	\begin{proof}
		For smoothness of $Z,$ we need to check that the branch locus $R$ 
		does not intersect with the branch locus of  $\gamma\colon E\to \PP^1$.
		For $R$ is contained in $\PP^1\backslash \{(1:0)\},$
		we can assume the homogeneous coordinate $u_1=1,$ then the point in $R$ satisfies one of the following equations:  $
		14u_0^2 + 5=0,~44863u_0^2 - 137894762198231040=0,~70345184u_0^4 -216218987126801139936u_0^2+1=0.$ The polynomials of these equations are irreducible over $\QQ.$  By comparing these irreducible polynomials with the branch locus of $\gamma,$ we get the conclusion that these two branch loci do not intersect.
		The same argument as in the proof of Theorem \ref{theorem main result: non-invariance of weak approximation with BMO},	
		the locus $Z$ defined by $s=0$ in $B$ is geometrically connected. So it is smooth, projective, and geometrically connected.
	\end{proof}

	Let $X$ be the zero locus of $(1,-a,-s)\in\Gamma(B,\Ocal_B\oplus\Ocal_B\oplus\Lcal^{\otimes2})\subset\Gamma(B,\Sym^2\Ecal)$ in the projective space bundle $\Proj(\Ecal)$ with the natural projection $\alpha\colon  X\to B.$  By the same argument as in the proof of Theorem \ref{theorem main result: non-invariance of the Hasse principle with BMO for odd degree}, the variety $X$ is smooth, projective, and geometrically connected. Let $ \beta\colon X \to E$ be the composition of $\alpha$ and $pr_1.$  Then it is a Ch\^atelet surface bundle over the curve $E.$ For this $X,$ we have the following proposition.

	\begin{prop}\label{example2: main result: non-invariance of the Hasse principle with BMO for odd degree}
		For $K=\QQ$ and $L=\QQ(\zeta_7+\zeta_7^{-1}),$  the $3$-fold $X$ has the following properties.
		\begin{itemize}
			\item $X$ is a counterexample to the Hasse principle, and its failure of the Hasse principle is explained by the Brauer-Manin obstruction.
			\item $X_L$ is a counterexample to the Hasse principle, but its failure of the Hasse principle cannot be explained by the Brauer-Manin obstruction.
		\end{itemize}
	\end{prop}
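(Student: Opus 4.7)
The plan is to reinstantiate the proof of Theorem \ref{theorem main result: non-invariance of the Hasse principle with BMO for odd degree} using the explicit data assembled above, and to verify that the single appeal to Conjecture \ref{conjecture Stoll} in that proof can be replaced here by an unconditional input. Concretely, I would first check that the triple $(E,\QQ,L)$ is of type $I$ (Definition \ref{definition curve of type}) without any conjectural assumption: since $E$ has analytic rank $0$ over $\QQ$, Kolyvagin's theorem combined with Gross-Zagier shows that $E(\QQ)$ is finite and $\Sha(E/\QQ)$ is finite, and the Cassels-Tate dual sequence then yields Conjecture \ref{conjecture Stoll} for $E$ itself. The explicit (computer-verified) Mordell-Weil groups $E(\QQ)=\{O\}$ and $E(L)\setminus E(\QQ)=\{(7\alpha^2+14\alpha-7,0),(7\alpha^2-7\alpha-14,0),(-14\alpha^2-7\alpha+21,0)\}$ confirm that $E(\QQ)$ and $E(L)$ are finite, nonempty, and distinct.

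Next I would recall that the two auxiliary Ch\^atelet surfaces $V_\infty$ (Example \ref{example2: construction of V_infty}, with $S=\{29\}$) and $V_0$ (Example \ref{example2: construction of V_0}, with $S=\{13\}$) satisfy respectively the conclusions of Proposition \ref{proposition no local point for S} and Proposition \ref{proposition the valuation of Brauer group on local points are fixed and on S nontrivial}; both places $13$ and $29$ split completely in $L$. The dominant morphism $\gamma\colon E\to\PP^1$ maps $E(\QQ)$ to $\{\infty\}$ and $E(L)\setminus E(\QQ)$ to $\{0\}$, and the preceding lemma guarantees that the branch locus $R$ misses that of $\gamma$, so that $Z$ is smooth, projective and geometrically connected. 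Together with $\Br(\overline{B})=0$ (Lemma \ref{lemma zero of Br B}), Proposition \ref{Poonen's main proposition} then applies, giving that $X$ is smooth, projective, geometrically connected, and that $\alpha^*\colon\Br(B)\to\Br(X)$ (and its $L$-analogue) is an isomorphism; in particular $\beta^*\colon\Br(E)\to\Br(X)$ is also an isomorphism.

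With all these ingredients in place, the four required properties follow exactly as in Theorem \ref{theorem main result: non-invariance of the Hasse principle with BMO for odd degree}. Nonemptiness $X(\AA_\QQ)\neq\emptyset$ comes from fibers over $E(\QQ)$ giving $V_\infty(\AA_\QQ^{\{29\}})$-adelic points away from $29$, and from using an $L$-rational fiber isomorphic to $V_{0L}$ at a place $v'\mid 29$ of $L$ with $L_{v'}=\QQ_{29}$ (provided by the splitting of $29$) to furnish a $\QQ_{29}$-point of $X$. The vanishing $X(\AA_\QQ)^{\Br}=\emptyset$ follows from the Brauer group isomorphism, the (now unconditional) Conjecture \ref{conjecture Stoll} for $E$, functoriality of the Brauer-Manin pairing, and the fact $V_\infty(\QQ_{29})=\emptyset$, which forces $pr^{\infty_\QQ}(X(\AA_\QQ)^{\Br})\subset V_\infty(\AA_\QQ^{\infty_\QQ})\times E(\QQ)=\emptyset$. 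For the $L$-side, $X_L(\AA_L)^{\Br}$ contains $\bigsqcup_{Q\in E(L)\setminus E(\QQ)}\beta^{-1}(Q)(\AA_L)\cong V_{0L}(\AA_L)\times(E(L)\setminus E(\QQ))$, which is nonempty by Proposition \ref{proposition the valuation of Brauer group on local points are fixed and on S nontrivial}. Finally $X(L)=\emptyset$ because every $L$-rational fiber of $\beta$ is either $V_{\infty L}$, for which $V_\infty(\AA_L)=\emptyset$ (as $29$ splits in $L$ and $V_\infty(\QQ_{29})=\emptyset$), or $V_{0L}$, for which $V_0(L)=\emptyset$ by the global reciprocity argument of Remark \ref{remark the valuation of Brauer group on local points are fixed and on S nontrivial}, since $\sharp S_L=[L:\QQ]=3$ is odd at the place above $13$.

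The main obstacle is the unconditional verification step, namely confirming via Kolyvagin-Gross-Zagier that $\Sha(E/\QQ)$ is finite (from analytic rank zero) and carrying out the explicit Mordell-Weil computation of $E(L)$ in the cubic field $L=\QQ(\zeta_7+\zeta_7^{-1})$; once these inputs are granted, the remainder of the argument is a direct specialization of the already-established theorem.
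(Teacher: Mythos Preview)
Your proposal is correct and follows exactly the paper's approach: the paper's own proof is the single line ``This is the same as in the proof of Theorem \ref{theorem main result: non-invariance of the Hasse principle with BMO for odd degree},'' and you have faithfully unpacked that argument with the explicit data $v_1=13$, $v_2=29$, the surfaces of Examples \ref{example2: construction of V_infty} and \ref{example2: construction of V_0}, and the curve $E$, correctly noting that the appeal to Conjecture \ref{conjecture Stoll} becomes unconditional via Kolyvagin's theorem since $E$ has analytic rank $0$.
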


	\begin{proof}
		This is the same as in the proof of Theorem \ref{theorem main result: non-invariance of the Hasse principle with BMO for odd degree}.
	\end{proof}

	The $3$-fold $X$ that we constructed, has an affine open subvariety defined by the following equations, which is closed subvariety of $\AA^5$ with affine coordinates $(x,y,z,x',y').$
	\begin{equation*}
		\begin{cases}
			y^2-377z^2=14(x^4-89726){y'}^2+(x^2-878755181)(5x^2-4393775906)\\ 
			{y'}^2={x'}^3-343x'-2401
		\end{cases}
	\end{equation*}

	\subsection{An explicit unconditional example for Theorem \ref{theorem main result: non-invariance of the Hasse principle with BMO for exist real place}}\label{subsection main example3}
	
	In the subsection,
	let $K=\QQ$ and $L=\QQ(\sqrt{3}).$ We will construct an explicit Ch\^atelet surface bundle over a curve having properties of Theorem \ref{theorem main result: non-invariance of the Hasse principle with BMO for exist real place}.

	\subsubsection{Choosing an elliptic curve and a dominant morphism} 
	Let $E$ and $\gamma\colon E\to \PP^1$ be the same as in Subsection \ref{subsection main example1}. For $E(\RR)$ is connected, the triple $(E,K,L)$ is of type $II.$

	\subsubsection{Construction of a Ch\^atelet surface bundle}

	Let $P_\infty(x)=5(x^4+805),$ and let $P_0(x)=-5(x^4+115).$ Notice that these polynomials $P_\infty$ and $P_0$ are irreducible.  As mentioned in Example \ref{example3: construction of V_infty} and Example \ref{example3: construction of V_0}, let $V_\infty$ be the Ch\^atelet surface given by $y^2+23z^2=P_\infty(x),$ and let $V_0$ be the Ch\^atelet surface given by $y^2+23z^2=P_0(x).$ 
	Let $\tilde{P}_\infty(x_0,x_1)$ and $\tilde{P}_0(x_0,x_1)$ be the homogenizations of $P_\infty$ and $P_0.$ Let $(u_0:u_1)\times(x_0:x_1)$ be the coordinates of $\PP^1\times\PP^1,$ and let $s'=u_0^2\tilde{P}_\infty(x_0,x_1)+u_1^2\tilde{P}_0(x_0,x_1)\in \Gamma(\PP^1\times\PP^1,\Ocal(1,2)^{\otimes2}).$ For $P_0(x)$ and $P_\infty(x)$ are coprime in $K[x],$ by Jacobian criterion, the locus $Z'$ defined by $s'=0$ in $\PP^1\times\PP^1$ is smooth. Then the branch locus of the composition $ Z'\hookrightarrow \PP^1\times\PP^1  \stackrel{pr_1}\to\PP^1,$ denoted by $R,$ is finite, and contained in $\PP^1\backslash \{(1:0)\}.$
	Let $B=E\times \PP^1.$ Let $\Lcal=(\gamma,id)^*\Ocal(1,2),$ and let $s=(\gamma,id)^* (s')\in \Gamma(B,\Lcal^{\otimes2}).$ With these notations, we have the following lemma.

	\begin{lemma}
		The curve $Z$ defined by $s=0$ in $B$ is smooth, projective, and geometrically connected.
	\end{lemma}
	
	\begin{proof}
		For smoothness of $Z,$ we need to check that the branch locus $R$ 
		does not intersect with the branch locus of  $\gamma\colon E\to \PP^1$.
		For $R$ is contained in $\PP^1\backslash \{(1:0)\},$
		we can assume the homogeneous coordinate $u_1=1,$ then the point in $R$ satisfies one of the following equations:
		$	u_0^2 -1=0,~7u_0^2 - 1=0.$ 	
		By comparing this locus with the branch locus of $\gamma,$ these two branch loci do not intersect.
		The same argument as in the proof of Theorem \ref{theorem main result: non-invariance of weak approximation with BMO},	
		the locus $Z$ defined by $s=0$ in $B$ is geometrically connected. So it is smooth, projective, and geometrically connected.
	\end{proof}

	Let $X$ be the zero locus of $(1,-a,-s)\in\Gamma(B,\Ocal_B\oplus\Ocal_B\oplus\Lcal^{\otimes2})\subset\Gamma(B,\Sym^2\Ecal)$ in the projective space bundle $\Proj(\Ecal)$ with the natural projection $\alpha\colon  X\to B.$  By the same argument as in the proof of Theorem \ref{theorem main result: non-invariance of the Hasse principle with BMO for odd degree}, the variety $X$ is smooth, projective, and geometrically connected. Let $ \beta\colon X \to E$ be the composition of $\alpha$ and $pr_1.$ Then it is a Ch\^atelet surface bundle over the curve $E.$ For this $X,$ we have the following proposition.

	\begin{prop}\label{example3: main result: non-invariance of the Hasse principle with BMO for even degree and exist real place for L}
		For $K=\QQ$ and $L=\QQ(\sqrt{3}),$  the $3$-fold $X$ has the following properties.
		\begin{itemize}
			\item $X$ is a counterexample to the Hasse principle, and its failure of the Hasse principle is explained by the Brauer-Manin obstruction.
			\item $X_L$ is a counterexample to the Hasse principle, but its failure of the Hasse principle cannot be explained by the Brauer-Manin obstruction.
		\end{itemize}
	\end{prop}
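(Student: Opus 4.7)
The plan is to argue exactly as in the proof of Theorem~\ref{theorem main result: non-invariance of the Hasse principle with BMO for exist real place}, with the explicit elliptic curve $E$, the dominant morphism $\gamma\colon E\to\PP^1$, and the pair of Ch\^atelet surfaces $V_\infty,V_0$ from Examples~\ref{example3: construction of V_infty} and~\ref{example3: construction of V_0} realizing the abstract data used there. One checks that all the input hypotheses of that theorem are in place: $(E,K,L)$ is of type $II$ because $E(\RR)$ is connected and $E(\QQ)=\{(0{:}1{:}0)\}$ while $E(L)\neq E(K)$ (computed in Subsection~\ref{subsection main example1}); the role of the real place $v_0$ is played by $\infty_K$, and the role of the splitting finite place $v_1$ is played by the prime $23$, which indeed splits completely in $L=\QQ(\sqrt{3})$. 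The smoothness of $X$ (and of $Z$) and the isomorphism $\beta^*\colon\Br(E)\to\Br(X)$ are already provided by Proposition~\ref{Poonen's main proposition} and the lemma preceding the proposition.

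First I would verify the local behavior of the two fibers. By Example~\ref{example3: construction of V_0} and Proposition~\ref{proposition no local point for S}, the surface $V_\infty$ (defined by $y^2+23z^2=5(x^4+805)$) satisfies $V_\infty(\AA_\QQ^{\{23\}})\neq\emptyset$ but $V_\infty(\QQ_{23})=\emptyset$; and by Example~\ref{example3: construction of V_infty}, the surface $V_0$ (defined by $y^2+23z^2=-5(x^4+115)$) satisfies $V_0(\AA_\QQ^{\infty_K})\neq\emptyset$ but $V_0(\RR)=\emptyset$. Over $L$, since the place $23$ splits, $V_0(L_{v'})$ is nonempty for $v'\mid 23$, and since $V_\infty$ was built so that $V_\infty(L_{v'})=\emptyset$ for $v'\mid 23$ and $V_0(L_{v_0'})=\emptyset$ for the real place $v_0'\mid\infty_K$, the local obstructions transfer correctly to $L$. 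These are exactly the fiberwise inputs used in Theorem~\ref{theorem main result: non-invariance of the Hasse principle with BMO for exist real place}.

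Next I would assemble the four global statements. Over $K$: the existence of an adelic point on $X$ follows from combining adelic points on $V_\infty$-fibers over $E(K)$ away from $23$ with $L$-points on the $V_0$-fibers at a place above $23$ (which are $K_{23}$-points by splitting). Emptiness of $X(\AA_\QQ)^{\Br}$ follows by noting $C(K)=\{(0{:}1{:}0)\}$ is finite and $pr^{\infty_K}(X(\AA_\QQ)^{\Br})\subset\bigsqcup_{P\in E(\QQ)}\beta^{-1}(P)(\AA_\QQ^{\infty_K})\cong V_\infty(\AA_\QQ^{\infty_K})\times E(\QQ)=\emptyset$ because $V_\infty(\QQ_{23})=\emptyset$. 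Over $L$: pointing out that $X(L)=\emptyset$ reduces via the finiteness of $E(L)$ to checking that neither $V_{0L}$ nor $V_{\infty L}$ has an $L$-point — the former fails at the real place $v_0'$ and the latter fails at places above $23$. Nonemptiness of $X_L(\AA_L)^{\Br}$ uses Poonen's isomorphism $\beta_L^*\colon\Br(E_L)\to\Br(X_L)$: any $A\in\Br(E_L)$ is constant on the connected space $E(L_{v_0'})\cong E(\RR)$, so an adelic point obtained by taking a $V_{\infty L}$-point in the fiber over some $P\in E(K)$ at $v_0'$ and a $V_{0L}$-adelic point in the fiber over some $Q\in E(L)\setminus E(K)$ away from $v_0'$ pairs trivially with every Brauer class.

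The main obstacle in making this precise is the combinatorial bookkeeping of which place kills which fiber, and, before that, checking that the branch locus $R$ of $Z'\to\PP^1$ is disjoint from the branch locus of $\gamma$ — this is precisely the content of the lemma in Subsection~\ref{subsection main example3}, reduced to comparing the irreducible factorizations of $u_0^2-1$ and $7u_0^2-1$ with the branch locus of $\gamma$ computed in Subsection~\ref{subsection main example1}. Once smoothness and geometric connectedness of $Z$ (hence of $X$) are in hand, and once one checks that the connected-component obstruction at $\infty_K$ really does force $A$ to evaluate constantly on $E(\RR)$, the argument is a verbatim transcription of the proof of Theorem~\ref{theorem main result: non-invariance of the Hasse principle with BMO for exist real place}.
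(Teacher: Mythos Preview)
Your proposal is correct and follows exactly the paper's approach: the paper's own proof is the single line ``This is the same as in the proof of Theorem~\ref{theorem main result: non-invariance of the Hasse principle with BMO for exist real place}'', and you have faithfully expanded that reference with the explicit data $v_0=\infty_\QQ$, $v_1=23$, and the curve $E$ from Subsection~\ref{subsection main example1}. One cosmetic point: $L=\QQ(\sqrt{3})$ has two real places, so in the argument for $X_L(\AA_L)^{\Br}\neq\emptyset$ the set $S_{1L}$ consists of both of them rather than a single $v_0'$; since $E(\RR)$ is connected this changes nothing, but you should write $\prod_{v'\in S_{1L}}\beta^{-1}(P)(L_{v'})$ rather than singling out one real place.
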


	\begin{proof}
		This is the same as in the proof of Theorem \ref{theorem main result: non-invariance of the Hasse principle with BMO for exist real place}.
	\end{proof}

	The $3$-fold $X$ that we constructed, has an affine open subvariety defined by the following equations, which is closed subvariety of $\AA^5$ with affine coordinates $(x,y,z,x',y').$
	\begin{equation*}
		\begin{cases}
			y^2+23z^2=5(x^4+805)(x'-4)^2-5(x^4+115)\\
			{y'}^2={x'}^3-16
		\end{cases}
	\end{equation*}

	\subsection{Exceptions}\label{subsection main example4}
	For Question \ref{question on HP}, when the degree $[L:K]$ is even and $L$ has no real place, besides cases of Theorem \ref{theorem main result: non-invariance of the Hasse principle with BMO for odd degree} and Theorem \ref{theorem main result: non-invariance of the Hasse principle with BMO for exist real place}, we can give some unconditional examples, case by case, to give negative answers to Question \ref{question on HP}, although we do not have a uniform way to construct them. In this subsection, we give an explicit example to explain how it works for
	the case that $K=\QQ$ and $L=\QQ(i).$

	\subsubsection{Choosing an elliptic curve} 
	Let $E$ be an elliptic curve defined over $\QQ$ by a homogeneous equation:
	$$w_1^2w_2=w_0^3-16w_2^3$$ with homogeneous coordinates $(w_0:w_1:w_2)\in \PP^2.$ This is an elliptic curve with complex multiplication. Its quadratic twist $E^{(-1)}$ is isomorphic to an elliptic curve defined by a homogeneous equation:
	$w_1^2w_2=w_0^3+16w_2^3$ with homogeneous coordinates $(w_0:w_1:w_2)\in \PP^2.$  These elliptic curves $E$ and $E^{(-1)}$ defined over $\QQ,$ are  of analytic rank $0.$ Then the Tate-Shafarevich group $\Sha(E,\QQ)$ is finite, so $E$ satisfies weak approximation with Brauer-Manin obstruction off $\infty_K.$ The Mordell-Weil groups $E(K)$ and $E^{(-1)}(K)$ are finite, so $E(L)$ is finite. Indeed, the Mordell-Weil group $E(K)=\{(0:1:0)\}$ and $E(L)=\{(0:\pm 4i:1),(0:1:0)\}.$ So the triple $(E,K,L)$ is of type $I.$
	

	\subsubsection{Choosing a dominant morphism}

	Let $\PP^2\backslash\{(1:0:0)\}\to \PP^1$ be a morphism over $\QQ$ given by $(w_0:w_1:w_2)\mapsto(w_1:4w_2).$ Composite with the natural inclusion $E\hookrightarrow \PP^2\backslash\{(1:0:0)\},$ then we get a morphism $\gamma\colon E\to \PP^1,$ which is a dominant morphism of degree $3.$ The dominant morphism $\gamma$ maps $E(K)$ to $\{\infty\}=\{(1:0)\},$ and maps $(0:\pm 4i:1)$ to $(\pm i:1).$ By B\'ezout's Theorem \cite[Chapter I. Corollary 7.8]{Ha97} or Hurwitz's Theorem \cite[Chapter IV. Corollary 2.4]{Ha97}, the branch locus of $\gamma$ is $\{(1:0),(\pm i:1)\}.$

	\subsubsection{Properties of Ch\^atelet surfaces}

	Let $P_\infty(x)=2(x^4-10x^2+15),~ P_0(x)=-2(5x^4-39x^2+75),$ and let $P_1(x)=P_\infty(x)+iP_0(x).$ Notice that all those polynomials $P_\infty,P_0,P_1$ are separable, and $P_1(x)=-2[x^2-(5+i)][(-1+5i)x^2-15i].$ The two polynomials $x^2-(5+i)$ and $(-1+5i)x^2-15i$ are irreducible over $\QQ(i)$ (indeed, they are irreducible over $\QQ(i)_3$). 
	Let $V_\infty$ be the Ch\^atelet surface over $\QQ$ given by $y^2+15z^2=P_\infty(x),$ and let $V_1$ be the Ch\^atelet surface over $\QQ(i)$ given by $y^2+15z^2=P_1(x).$ With these notations, we have the following lemmas.

	\begin{lemma}\label{example4 V_infty has no local point}
		The Ch\^atelet surface $V_\infty$ given by $y^2+15z^2=2(x^4-10x^2+15),$ has a $\QQ_v$-point for all $v\in \Omega_{\QQ}\backslash\{5\},$ but no $\QQ_5$-point. 
	\end{lemma}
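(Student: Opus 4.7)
The plan is to check local solvability of the affine subvariety $V_\infty^0:y^2+15z^2=2(x^4-10x^2+15)$ place by place, then invoke Remark \ref{remark the implicit function thm and local constant Brauer group} to pass between $V_\infty^0$ and $V_\infty$. For any $x_0\in\QQ_v$ with $P(x_0)\neq 0$, the conic $y^2+15z^2=P(x_0)$ has a $\QQ_v$-point iff $(-15,P(x_0))_v=1$, where $P(x)=2(x^4-10x^2+15)$.

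For places $v\neq 5$, I would exhibit explicit $x_0$. At $v=\infty$, take $x_0=0$: $P(0)=30>0$ gives a real point. At $v=2$, note $-15\equiv 1\pmod{8}$, so $-15\in\QQ_2^{\times 2}$ by Hensel, and Remark \ref{remark birational to PP^2} directly supplies a $\QQ_2$-point. At every odd $v\notin\{3,5\}$, take $x_0=0$ again: both $-15$ and $P(0)=30$ are units at $v$ with even (zero) valuation, so Lemma \ref{lemma hilbert symbal lifting for odd prime} yields $(-15,30)_v=1$. The borderline case $v=3$ needs a sharper choice: picking $x_0=2$ yields $P(2)=-18=-2\cdot 3^2$, and bilinearity modulo squares combined with Lemma \ref{lemma hilbert symbal lifting for odd prime} (applied to the unit factors against $-2$) reduces $(-15,-18)_3$ to the Legendre symbol $\left(\tfrac{-2}{3}\right)=\left(\tfrac{1}{3}\right)=1$.

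The heart of the lemma is the non-existence at $v=5$, where I must show $(-15,P(x))_5=-1$ for \emph{every} $x\in\QQ_5$. First, $P$ has no zero in $\QQ_5$ since its roots satisfy $x^2=5\pm\sqrt{10}$ and $v_5(10)=1$ forces $\sqrt{10}\notin\QQ_5$. Then I would split into three cases on $n=v_5(x)$: for $n=0$, reduction modulo $5$ gives $P(x)/2\equiv x^4$, a unit square, so $(-15,P(x))_5=(-15,2)_5$; for $n\geq 1$, one checks $v_5(P(x))=1$ and $P(x)/5\equiv 6\equiv 1\pmod 5$, again a unit square, so $(-15,P(x))_5=(-15,5)_5$; for $n\leq -1$, factor $P(x)=2x^4(1-10x^{-2}+15x^{-4})$ and observe that the parenthesised factor lies in $1+125\ZZ_5\subset\QQ_5^{\times 2}$, returning to $(-15,2)_5$. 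Finally, the standard Hilbert symbol formula at an odd prime (or Lemma \ref{lemma hilbert symbal lifting for odd prime} combined with bilinearity) identifies $(-15,2)_5=\left(\tfrac{2}{5}\right)$ and $(-15,5)_5=\left(\tfrac{-3}{5}\right)$, both equal to $-1$ since $2$ and $-3\equiv 2$ are nonresidues modulo $5$. The main obstacle will be the careful valuation bookkeeping in this last step, ensuring in each valuation regime that the unit extracted after pulling out the power of $5$ is indeed a square in $\QQ_5^\times$ so that the Hilbert symbol collapses onto the extracted factor.
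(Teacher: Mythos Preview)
Your proof is correct and follows essentially the same place-by-place Hilbert-symbol strategy as the paper. The only tactical differences are minor: at the generic odd primes $v\notin\{2,3,5\}$ you take $x_0=0$ (the paper instead chooses $x_0$ of negative valuation and invokes Lemma~\ref{lemma Hensel lemma for Hilbert symbal}), and at $v=5$ the paper collapses your three-case split into two cases $v(x)\le 0$ and $v(x)>0$, using Lemma~\ref{lemma Hensel lemma for Hilbert symbal} to reduce directly to $(-15,2x^4)_5=-1$ and $(-15,30)_5=-1$ without the explicit modular bookkeeping.
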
 
	
	\begin{proof}
		Suppose that $v=\infty_\QQ.$ Let $x_0=0.$ Then $(-15,P_\infty(x_0))_v=(-15,30)_v=1,$ which implies that the surface $V_\infty^0$ admits a $\RR$-point with $x=0.$\\
		Suppose that $v=2.$ For $-15\equiv 1\mod 8,$ by Hensel's lemma, the element $-15$ is a square in $\QQ_2.$ By Remark \ref{remark birational to PP^2}, the surface $V_\infty$ admits a $\QQ_2$-point.\\
		Suppose that $v=3.$ Let $x_0=2.$ Then $(-15,P_\infty(x_0))_v=(-15,-18)_v=(-15,9)_v(-15,-2)_v=1,$ which implies that the surface $V_\infty^0$ admits a $\QQ_3$-point with $x=2.$\\
		Suppose that $v\in\Omega_{\QQ}^f\backslash \{2,3,5\}.$ Take $x_0\in \QQ_v$ such that the valuation $v(x_0)<0.$ Then by Lemma \ref{lemma Hensel lemma for Hilbert symbal}, we have $(-15,P_\infty(x_0))_v=(-15,2)_v=1,$ which implies that $V_\infty^0$ admits a $\QQ_v$-point with $x=x_0.$\\
		Suppose that $v=5.$ Then $(-15,2)_v=-1.$  Let $x\in \QQ_5.$ If $v(x)\leq 0,$ then by Lemma \ref{lemma Hensel lemma for Hilbert symbal}, we have $(-15,P_\infty(x))_v=(-15,2x^4)_v=-1.$ If $v(x)>0,$ then  by Lemma \ref{lemma Hensel lemma for Hilbert symbal}, we have $(-15,P_\infty(x))_v=(-15,30)_v=-1.$ In each case, we have $(-15,P_\infty(x))_v=-1,$ which implies that $V_\infty^0$ has no $\QQ_5$-point. By Remark \ref{remark the implicit function thm and local constant Brauer group}, we have $V_\infty(\QQ_5)=\emptyset.$ 
	\end{proof}

	\begin{lemma}\label{example4 V_1 exists local point}
		For any $v'\in \Omega_{\QQ(i)},$  the Ch\^atelet surface $V_1$ given by $y^2+15z^2=-2[x^2-(5+i)][(-1+5i)x^2-15i],$ has a  $\QQ(i)_{v'}$-point. 
	\end{lemma}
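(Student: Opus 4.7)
The plan is to verify that $V_1(\QQ(i)_{v'})\neq\emptyset$ for every $v'\in\Omega_{\QQ(i)}$ by a case-by-case analysis on $v'$, paralleling the Hilbert symbol computations used in the proofs of Propositions \ref{proposition no local point for S}, \ref{proposition the valuation of Brauer group on local points are fixed outside S and take two value on S}, and \ref{proposition the valuation of Brauer group on local points are fixed and on S nontrivial}, with $a=-15$ playing the role of the Ch\^atelet coefficient.

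First I would dispose of the places at which $-15$ is already a square in $\QQ(i)_{v'}$: the unique complex place, and the unique place $v'\mid 2$ (since $-15\equiv 1\pmod 8$ gives $-15\in\QQ_2^{\times 2}\subset \QQ(i)_{v'}^{\times 2}$ by Hensel's lemma). At those $v'$, Remark \ref{remark birational to PP^2} supplies a local point directly. For any odd $v'\mid p$ with $p\notin\{3,5\}$, both $-15$ and $-2(-1+5i)$ lie in $\Ocal_{\QQ(i)_{v'}}^\times$; choosing $x_0\in\QQ$ with $v_p(x_0)<0$ so that the $x_0^2$-term dominates in each of the two quadratic factors of $P_1(x_0)/(-2)$, two applications of Lemma \ref{lemma Hensel lemma for Hilbert symbal} reduce the computation of $(-15,P_1(x_0))_{v'}$ to $(-15,-2(-1+5i))_{v'}$ modulo squares, which equals $1$ by Lemma \ref{lemma hilbert symbal lifting for odd prime}. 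This produces a point of $V_1^0(\QQ(i)_{v'})$ with $x=x_0$.

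The main obstacle lies at the three remaining odd places where $v'(-15)$ is odd: the two split places over $5$ (at which $\QQ(i)_{v'}=\QQ_5$ and $i\mapsto j$ with $j^2=-1$ and $j\equiv\pm 2\pmod 5$) and the inert place $v'\mid 3$ (with residue field $\FF_9$). For these I would pick specific small integer values of $x_0$ depending on $v'$ and show that $P_1(x_0)$ is a unit whose residue is a square, so that $(-15,P_1(x_0))_{v'}=1$. Concretely: at $v'\mid 3$ take $x_0=1$, yielding $P_1(1)=12-82i$ with residue $-i\in\FF_9^{\times 2}$ (an element of order $4$ in the cyclic group $\FF_9^\times$ of order $8$, hence a square); at the place over $5$ with $j\equiv 2\pmod 5$ take $x_0=2$, giving $P_1(2)=-18+2j\equiv 1\pmod 5$; and at the place over $5$ with $j\equiv -2\pmod 5$ take $x_0=1$, giving $P_1(1)\equiv 1\pmod 5$. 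In each of these bad cases, combining $v'(-15)$ being odd with the residue of $P_1(x_0)$ being a square in the residue field forces $(-15,P_1(x_0))_{v'}=1$, yielding a point of $V_1^0(\QQ(i)_{v'})$ and completing the case analysis.
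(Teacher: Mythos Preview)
Your overall strategy mirrors the paper's case-by-case Hilbert symbol computation, and your treatment of the archimedean place, the $2$-adic place, and the places above $3$ and $5$ is correct. However, your generic case contains a genuine error.

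You assert that for every odd $v'\mid p$ with $p\notin\{3,5\}$ the element $-2(-1+5i)$ lies in $\Ocal_{\QQ(i)_{v'}}^\times$. This is false: $N_{\QQ(i)/\QQ}(-1+5i)=26=2\cdot 13$, and in fact $-1+5i=(2+3i)(1+i)$, so at the prime $\pfr=(2+3i)$ above $13$ one has $v_\pfr(-2(-1+5i))=1$. Your large-$|x_0|$ reduction then gives $(-15,P_1(x_0))_\pfr=(-15,-2(-1+5i))_\pfr$, but since $v_\pfr(-15)=0$ and $\overline{-15}\equiv 11\pmod{13}$ is a non-square in $\FF_{13}$, this symbol equals $-1$, not $1$. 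So your chosen $x_0$ produces no point of $V_1^0(\QQ(i)_\pfr)$, and the place $\pfr\mid 13$ is left uncovered.

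The paper avoids this by singling out $v'\mid 13$ as a separate case and taking $x_0=1$: then $P_1(1)=-2(-4-i)(-1-10i)$ has $v_\pfr$-valuation $0$ (since $N(-4-i)=17$ and $N(-1-10i)=101$ are prime to $13$), and Lemma \ref{lemma hilbert symbal lifting for odd prime} applies directly. To repair your argument you must add the primes above $13$ to your list of exceptional places and supply a specific $x_0$ there, exactly as the paper does.
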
 
	
	\begin{proof}
		For the only archimedean place is complex, we only need to consider finite places.\\
		Suppose that $v'$ is a $2$-adic place. For $-15\in \QQ_2^{\times 2},$ by Remark \ref{remark birational to PP^2}, the surface $V_1$ admits a $\QQ(i)_{v'}$-point.\\
		Suppose that $v'=3.$ For $-2\in \QQ_3^{\times 2},$ we have $(-15,-2)_{v'}=1.$ By Lemma \ref{lemma Hensel lemma for Hilbert symbal}, we have $(-15,(-4-i)(-1-10i))_{v'}=(-15,(1+i)^2)_{v'}.$
		Let $x_0=1.$ Then  $(-15,P_1(x_0))_{v'}=(-15,-2(-4-i)(-1-10i))_{v'}=(-15,(1+i)^2)_{v'}=1,$ which implies that $V_1^0$ admits a $\QQ(i)_3$-point with $x=1.$\\
		Suppose that $v'|5,$ then $\QQ(i)_{v'}\cong\QQ_5.$ By Lemma \ref{lemma Hensel lemma for Hilbert symbal}, we have $(-15,-2(-5+i)(-10-17i))_{v'}=(-15,-34)_{v'}.$
		Let $x_0=1+i.$  Then $(-15,P_1(x_0))_{v'}=(-15,-2(-5+i)(-10-17i))_{v'}=(-15,-34)_{v'}=1,$ which implies that $V_1^0$ admits a $\QQ(i)_{v'}$-point with $x=1+i.$\\
		Suppose that $v'|13,$ then $\QQ(i)_{v'}\cong\QQ_{13}.$ Let $x_0=1.$ By Lemma \ref{lemma hilbert symbal lifting for odd prime}, we have  $(-15,P_1(x_0))_{v'}=(-15,-2(-4-i)(-1-10i))_{v'}=1,$ which implies that $V_1^0$ admits a $\QQ(i)_{v'}$-point with $x=1.$\\
		Suppose that $v'\in\Omega_{\QQ(i)}^f\backslash \{2,3,5,13\}.$ Take $x_0\in \QQ_{v'}$ such that the valuation $v'(x_0)<0.$ By Lemma \ref{lemma Hensel lemma for Hilbert symbal}, we have $(-15,P_\infty(x_0))_{v'}=(-15,-2(-1+5i)x_0^4)_{v'}.$  By Lemma \ref{lemma hilbert symbal lifting for odd prime}, we have $(-15,-2(-1+5i))_{v'}=1,$ so $(-15,P_\infty(x_0))_{v'}=1,$ which implies that $V_1^0$ admits a $\QQ(i)_{v'}$-point with $x=x_0.$
	\end{proof}
	
	For $P_1(x)$ is a product of two degree-2 irreducible factors, according to \cite[Proposition 7.1.1]{Sk01}, the Brauer group $\Br(V_1)/\Br(\QQ(i))\cong \ZZ/2\ZZ.$ Furthermore, by Proposition 7.1.2 in loc. cit, we take the quaternion algebra $A=(-15,(-1+5i)x^2-15i)\in \Br(V_1)$ as a generator element of this group. Then we have the equality $A=(-15,(-1+5i)x^2-15i)=(-15,-2(x^2-(5+i)))$ in $\Br(V_1).$  With these notations, we have the following lemmas.

	\begin{lemma}\label{example4 the valuation of Brauer group on local points are fixed}
		For any $v'\in \Omega_{\QQ(i)}$ and any $P_{v'}\in V_1(\QQ(i)_{v'}),$
		\begin{equation*}
			\inv_{v'}(A(P_{v'}))=\begin{cases}
				0& if\quad v'\neq 3\\
				1/2 & if \quad v'=3
			\end{cases}
		\end{equation*}
	\end{lemma}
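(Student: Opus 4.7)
The plan is to verify the formula case by case over the places $v'$ of $\QQ(i)$. By Remark \ref{remark the implicit function thm and local constant Brauer group}, the evaluation $\inv_{v'}(A(\cdot))$ is constant on $V_1(\QQ(i)_{v'})$, so it suffices to evaluate it on any single point of $V_1^0(\QQ(i)_{v'})$; such a point was exhibited in Lemma \ref{example4 V_1 exists local point}. The principal tool is the pair of representations $A=(-15,-2(x^2-(5+i)))=(-15,(-1+5i)x^2-15i)$ in $\Br(V_1)$, from which we switch depending on the valuation of $x$ at $v'$, together with the Hilbert symbol manipulations of Lemmas \ref{lemma hilbert symbal lifting for odd prime} and \ref{lemma Hensel lemma for Hilbert symbal}.

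For the unique archimedean (complex) place and for places $v'$ above $2$, one has $-15\in\QQ(i)_{v'}^{\times 2}$ (in the latter case by Hensel applied to $-15\equiv 1\pmod 8$), so $(-15,\cdot)_{v'}=1$, giving invariant $0$. For odd finite places $v'$ not above $3$ or $5$, $v'(-15)=0$; a case analysis on $v'(x)$ analogous to those in the proof of Proposition \ref{proposition the valuation of Brauer group on local points are fixed and on S nontrivial} reduces each representation to a Hilbert symbol $(-15,\cdot)_{v'}$ whose second argument has even valuation, which is trivial by Lemma \ref{lemma hilbert symbal lifting for odd prime}. For the places above $5$, which split so that $\QQ(i)_{v'}\cong\QQ_5$, I would evaluate at the explicit point $x=1+i$ from Lemma \ref{example4 V_1 exists local point}: either representation yields an element of $\QQ_5^\times$ whose reduction modulo $5$ is a square under both embeddings $i\mapsto\pm 2$, giving invariant $0$.

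For the critical place $v'=3$ (inert in $\QQ(i)$, since $-1$ is not a square modulo $3$), the local field $\QQ(i)_3$ is the unramified quadratic extension of $\QQ_3$ with residue field $\FF_9$ and $v'(-15)=1$. Since $-5\equiv 1\pmod 3$, Hensel's lemma gives $-5\in\Ocal_{\QQ(i)_3}^{\times 2}$, so $\QQ(i)_3(\sqrt{-15})=\QQ(i)_3(\sqrt{-3})$ is ramified quadratic over $\QQ(i)_3$; consequently, for any unit $u\in\Ocal_{\QQ(i)_3}^\times$, $(-15,u)_{v'}=1$ if and only if the reduction of $u$ is a square in $\FF_9$. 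Evaluating at the point $x=1$ from Lemma \ref{example4 V_1 exists local point}, one computes $-2(x^2-(5+i))|_{x=1}=8+2i\equiv -1-i\pmod 3$, and $-1-i$ is a nonsquare in $\FF_9$ (one verifies that $(a+bi)^2=-1-i$ has no solution in $\FF_3$). Hence $(-15,8+2i)_{v'}=-1$, so $\inv_{v'}(A(P_{v'}))=1/2$, and constancy from Remark \ref{remark the implicit function thm and local constant Brauer group} extends this value to all of $V_1(\QQ(i)_3)$. The main obstacle is this last Hilbert symbol computation at $v'=3$: because $v'(-15)$ is odd, Lemma \ref{lemma hilbert symbal lifting for odd prime} does not apply, and one must work directly with the quadratic residue structure of $\FF_9^\times\cong\ZZ/8\ZZ$ rather than reducing to a computation in $\QQ_3$.
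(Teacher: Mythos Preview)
There is a genuine gap. You open by asserting, via Remark~\ref{remark the implicit function thm and local constant Brauer group}, that $\inv_{v'}(A(\cdot))$ is \emph{constant} on $V_1(\QQ(i)_{v'})$, so that a single evaluation suffices. But that Remark only says the evaluation is \emph{locally} constant, taking at most two values; it does not guarantee constancy. Consequently, for $v'\mid 5$ and for $v'=3$ your single-point evaluations (at $x=1+i$ and $x=1$) determine the invariant only at those particular points, not on all of $V_1(\QQ(i)_{v'})$. Your Hilbert-symbol computation at $v'=3$ is correct as far as it goes, but the concluding clause ``constancy from Remark~\ref{remark the implicit function thm and local constant Brauer group} extends this value to all of $V_1(\QQ(i)_3)$'' invokes exactly the property the Remark does not provide.

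The paper fills this gap by treating all $x$ simultaneously. For $v'\mid 5$ it splits on the sign of $v'(x)$ and selects whichever representation of $A$ has a dominant term, reducing to $(-15,-x^2)_{v'}=1$ or $(-15,2i)_{v'}=1$. For $v'=3$ it argues by contradiction: if some point had invariant $0$, then $(-15,(-1+5i)x^2-15i)_{v'}=1$; since $(-15,-1+5i)_{v'}=-1$ this forces $v'(x)>0$, whence $(-15,-2(x^2-(5+i)))_{v'}=(-15,-2)_{v'}(-15,-(5+i))_{v'}=-1$, contradicting the other representation. Your sketch for the remaining odd places is also incomplete: at places $v'\mid 181$ (the norm of $-10+9i$), both $x^2-(5+i)$ and $(-1+5i)x^2-15i$ can simultaneously have odd positive valuation, so the case analysis on $v'(x)$ alone does not force either second slot to have even valuation; the paper handles these places separately by observing that $-15\in\QQ(i)_{v'}^{\times 2}$ there.
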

	
	\begin{proof}
		By Remark \ref{remark the implicit function thm and local constant Brauer group}, it suffices to compute the local invariant $\inv_{v'}(A(P_{v'}))$ for all $P_{v'}\in V_1^0(\QQ(i)_{v'}).$
		
		Suppose that $v'$ is an archimedean place or a $2$-adic place or a $181$-adic place.  Then $-15\in\QQ(i)_{v'}^{\times 2},$ so $\inv_{v'}(A(P_{v'}))=0$ for all $P_{v'}\in V_1(\QQ(i)_{v'}).$\\
		Suppose that $v'|5.$ Let $x\in \QQ(i)_{v'}.$ If $v'(x)\leq 0,$ then by Lemma \ref{lemma Hensel lemma for Hilbert symbal}, we have $(-15,(-1+5i)x^2-15i)_{v'}=(-15,-x^2)_{v'}=1.$ If $v'(x)>0,$ then by Lemma \ref{lemma Hensel lemma for Hilbert symbal}, we have $(-15,-2(x^2-(5+i)))_{v'}=(-15,2i)_{v'}=1.$ So $\inv_{v'}(A(P_{v'}))=0$ for all $P_{v'}\in V_1^0(\QQ(i)_{v'}).$\\ 
		Suppose that $v'\in \Omega_{\QQ(i)}^f\backslash \{3,$ all $2$-adic, $5$-adic and $181$-adic places$\}.$ Take an arbitrary $P_{v'}\in V_1^0(\QQ(i)_{v'}).$ If $\inv_{v'}(A(P_{v'}))=1/2,$ then $(-15,(-1+5i)x^2-15i)_{v'}=-1=(-15,-2(x^2-(5+i)))_{v'}$ at $P_{v'}.$ By Lemma \ref{lemma hilbert symbal lifting for odd prime}, the first and last equalities imply that $v'((-1+5i)x^2-15i)$ and $v'(x^2-(5+i))$ are odd, so they are positive. Hence $v'((-1+5i)x^2-15i-(-1+5i)(x^2-(5+i)))=v'(-10+9i)>0.$ But $v'\nmid181,$ which is a contradiction. So $\inv_{v'}(A(P_{v'}))=0.$
		
		Suppose that $v'=3.$  Take an arbitrary $P_{v'}\in V_1^0(\QQ(i)_{v'}).$ If $\inv_{v'}(A(P_{v'}))=1,$ then $(-15,(-1+5i)x^2-15i)_{v'}=1=(-15,-2(x^2-(5+i)))_{v'}$ at $P_{v'}.$ For $(-15,-1+5i)_{v'}=-1,$ the first equality implies that $v'(x)>0.$ Then by Lemma \ref{lemma Hensel lemma for Hilbert symbal}, we have
		$(-15,x^2-(5+i))_{v'}=(-15,-(5+i))_{v'}=-1,$ so $(-15,-2(x^2-(5+i)))_{v'}=(-15,-2)_{v'}(-15,-5-i)_{v'}=-1,$ which is a contradiction. So $\inv_{v'}(A(P_{v'}))=\half.$		
	\end{proof}

	\begin{lemma}\label{example4 V_1 has no rational point}
		The Ch\^atelet surface $V_1$ has no $\QQ(i)$-rational point. 
	\end{lemma}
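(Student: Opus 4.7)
The plan is to deduce the non-existence of a $\QQ(i)$-rational point from Lemma \ref{example4 the valuation of Brauer group on local points are fixed} via a Brauer--Manin obstruction argument, exactly in the spirit of Remark \ref{remark the valuation of Brauer group on local points are fixed and on S nontrivial}.

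First I would note that the prime $3$ is inert in $\QQ(i)$, so there is exactly one place of $\QQ(i)$ above $3$. By Lemma \ref{example4 the valuation of Brauer group on local points are fixed}, for any adelic point $(P_{v'}) \in V_1(\AA_{\QQ(i)})$, the local invariant $\inv_{v'}(A(P_{v'}))$ vanishes at every place except the one place $v'_0$ above $3$, where it equals $\tfrac{1}{2}$. Thus the sum $\sum_{v' \in \Omega_{\QQ(i)}} \inv_{v'}(A(P_{v'})) = \tfrac{1}{2}$ in $\QQ/\ZZ$, which is nonzero.

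Now I would argue by contradiction: suppose $Q \in V_1(\QQ(i))$ exists. Then via the diagonal embedding $Q$ yields an adelic point whose local invariants at each place are the values $\inv_{v'}(A(Q))$ computed in Lemma \ref{example4 the valuation of Brauer group on local points are fixed}. By global reciprocity (the exact sequence $0 \to \Br(\QQ(i)) \to \bigoplus_{v'} \Br(\QQ(i)_{v'}) \to \QQ/\ZZ \to 0$ recalled in the introduction), the sum $\sum_{v'} \inv_{v'}(A(Q))$ must vanish in $\QQ/\ZZ$. But the previous paragraph shows that this sum equals $\tfrac{1}{2} \neq 0$, a contradiction. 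Hence $V_1(\QQ(i)) = \emptyset$.

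There is no real obstacle here; the entire content is packaged in Lemma \ref{example4 the valuation of Brauer group on local points are fixed} together with the inertness of $3$ in $\QQ(i)$, after which global reciprocity finishes the argument immediately.
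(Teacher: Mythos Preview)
Your proof is correct and follows essentially the same approach as the paper: assume a rational point exists, invoke global reciprocity to force the sum of local invariants to vanish, and obtain a contradiction from Lemma~\ref{example4 the valuation of Brauer group on local points are fixed}. Your explicit remark that $3$ is inert in $\QQ(i)$ (so that there is a single place contributing $\tfrac{1}{2}$) is a helpful clarification that the paper leaves implicit.
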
 
	
	\begin{proof}
		If there exists $\QQ(i)$-rational point $P,$ by the global reciprocity law $\sum_{v\in \Omega_{\QQ(i)}}\inv_v(A(P))=0$ in $\QQ/\ZZ.$ 
		But from Lemma \ref{example4 the valuation of Brauer group on local points are fixed}, this sum is $1/2,$  which is nonzero in $\QQ/\ZZ.$ So $V_1$ has no $\QQ(i)$-rational point.
	\end{proof}

	\subsubsection{Construction of a Ch\^atelet surface bundle}

	Let $\tilde{P}_\infty(x_0,x_1)$ and $\tilde{P}_0(x_0,x_1)$ be the homogenizations of $P_\infty$ and $P_0.$ Let $(u_0:u_1)\times(x_0:x_1)$ be the coordinates of $\PP^1\times\PP^1,$ and let $s'=(u_0^2+2u_1^2)\tilde{P}_\infty(x_0,x_1)+u_0u_1\tilde{P}_0(x_0,x_1)\in \Gamma(\PP^1\times\PP^1,\Ocal(1,2)^{\otimes2}).$ By Jacobian criterion, the locus $Z'$ defined by $s'=0$ in $\PP^1\times\PP^1$ is smooth.
	Then the branch locus of the composition $ Z'\hookrightarrow \PP^1\times\PP^1  \stackrel{pr_1}\to\PP^1$ is finite, and contained in $\PP^1\backslash \{(1:0),(\pm i,1)\}.$
	Let $B=E\times \PP^1.$ Let $\Lcal=(\gamma,id)^*\Ocal(1,2),$ and let $s=(\gamma,id)^* (s')\in \Gamma(B,\Lcal^{\otimes2}).$ With these notations, we have the following lemma.

	\begin{lemma}
		The curve $Z$ defined by $s=0$ in $B$ is smooth, projective, and geometrically connected.
	\end{lemma}
	
	\begin{proof}
		For the branch locus of the composition $Z'\hookrightarrow \PP^1\times\PP^1  \stackrel{pr_1}\to\PP^1$ is contained in $\PP^1\backslash \{(1:0),(\pm i,1)\},$ and the branch locus of $\gamma\colon E\to \PP^1$ is $\{(1:0),(\pm i,1)\},$ they do not intersect, which implies the smoothness of $Z.$
		The same argument as in the proof of Theorem \ref{theorem main result: non-invariance of weak approximation with BMO},	
		the locus $Z$ defined by $s=0$ in $B$ is geometrically connected. So it is smooth, projective, and geometrically connected.
	\end{proof}

	Let $X$ be the zero locus of $(1,-a,-s)\in\Gamma(B,\Ocal_B\oplus\Ocal_B\oplus\Lcal^{\otimes2})\subset\Gamma(B,\Sym^2\Ecal)$ in the projective space bundle $\Proj(\Ecal)$ with the natural projection $\alpha\colon  X\to B.$  By the same argument as in the proof of Theorem \ref{theorem main result: non-invariance of the Hasse principle with BMO for odd degree}, the variety $X$ is smooth, projective, and geometrically connected. Let $ \beta\colon X \to E$ be the composition of $\alpha$ and $pr_1.$ Then it is a Ch\^atelet surface bundle over the curve $E.$ For this $X,$ we have the following proposition.

	\begin{prop}\label{example4: non-invariance of the Hasse principle with BMO for degree=2 and no real place}
		For $K=\QQ$ and $L=\QQ(i),$  the $3$-fold $X$ has the following properties.
		\begin{itemize}
			\item $X$ is a counterexample to the Hasse principle, and its failure of the Hasse principle is explained by the Brauer-Manin obstruction.
			\item $X_L$ is a counterexample to the Hasse principle, but its failure of the Hasse principle cannot be explained by the Brauer-Manin obstruction.
		\end{itemize}
	\end{prop}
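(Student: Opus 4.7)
The plan is to mirror the structure of the proofs of Theorems \ref{theorem main result: non-invariance of the Hasse principle with BMO for odd degree} and \ref{theorem main result: non-invariance of the Hasse principle with BMO for exist real place}, plugging in the specific local--global data for $V_\infty$ and $V_1$ given by Lemmas \ref{example4 V_infty has no local point}--\ref{example4 V_1 has no rational point}. The key structural facts to exploit are: $E(\QQ)=\{O\}$ and $E(L)\setminus E(\QQ)=\{(0:\pm 4i:1)\}$; the fiber $\beta^{-1}(O)$ is $V_\infty$ (this comes from setting $u_1=0$ in $s'$); the fibers $\beta_L^{-1}((0:\pm 4i:1))$ are $V_1$ and its Galois conjugate (since at $u_0=i,u_1=1$ the section $s'$ specializes to $P_\infty+iP_0=P_1$); and $5$ splits in $L=\QQ(i)$, so $\QQ_5\cong L_{v'}$ for each $v'\mid 5$. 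Since $E$ has analytic rank zero, $\Sha(E,\QQ)$ is finite, so $E$ satisfies weak approximation with Brauer-Manin obstruction off $\infty_\QQ$. By Proposition \ref{Poonen's main proposition} combined with $\Br(\overline{B})=0$, the maps $\beta^*\colon\Br(E)\to\Br(X)$ and $\beta_L^*\colon\Br(E_L)\to\Br(X_L)$ are isomorphisms.

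For the first bullet I would first produce an $\AA_\QQ$-point of $X$: at $v\neq 5$, pick a point in the fiber $V_\infty(\QQ_v)$ over $O$, which is nonempty by Lemma \ref{example4 V_infty has no local point}; at $v=5$, choose a place $v'\mid 5$ in $L$, so that $X(\QQ_5)=X_L(L_{v'})\supset V_1(L_{v'})\neq\emptyset$ by Lemma \ref{example4 V_1 exists local point}. To then establish $X(\AA_\QQ)^{\Br}=\emptyset$, apply functoriality of the Brauer-Manin pairing and the isomorphism $\beta^*$ to deduce
\[
pr^{\infty_\QQ}(X(\AA_\QQ)^{\Br})\subset\bigsqcup_{P\in E(\QQ)}\beta^{-1}(P)(\AA_\QQ^{\infty_\QQ})=V_\infty(\AA_\QQ^{\infty_\QQ})=\emptyset,
\]
where the last equality uses $V_\infty(\QQ_5)=\emptyset$. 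This forces $X(\QQ)=\emptyset$, so $X$ is a counterexample to the Hasse principle explained by the Brauer-Manin obstruction.

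For the second bullet, $X_L(\AA_L)\neq\emptyset$ follows from $V_1(\AA_L)\neq\emptyset$ sitting above the $L$-rational point $Q=(0:4i:1)\in E(L)\setminus E(\QQ)$. To produce a Brauer-Manin orthogonal adelic point, take $(R_{v'})\in V_1(\AA_L)\subset\beta^{-1}(Q)(\AA_L)$; for any $A=\beta_L^*(A')\in\Br(X_L)$, functoriality yields
\[
\sum_{v'\in\Omega_L}\inv_{v'}(A(R_{v'}))=\sum_{v'\in\Omega_L}\inv_{v'}(A'(Q))=0
\]
by global reciprocity applied to $Q\in E(L)$; hence $X_L(\AA_L)^{\Br}\neq\emptyset$. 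Finally $X(L)=\emptyset$ is proved fiberwise: every $L$-rational fiber of $\beta$ is $L$-isomorphic to $V_{\infty,L}$, to $V_1$, or to its Galois conjugate; the first has no $L$-point because $V_\infty(\QQ_5)=\emptyset$ and $5$ splits in $L$, while the latter two have none by Lemma \ref{example4 V_1 has no rational point} (for the conjugate, apply the nontrivial element of $\Gal(L/\QQ)$).

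The main conceptual point, not present in Theorems \ref{theorem main result: non-invariance of the Hasse principle with BMO for odd degree} and \ref{theorem main result: non-invariance of the Hasse principle with BMO for exist real place}, is that Stoll's conjecture and the connected-real-component trick are unavailable here, so $E(\AA_L)^{\Br}$ is not pinned down. The substitute is the observation that $\beta_L^*$ being a Brauer group isomorphism lets us produce Brauer-Manin orthogonal adelic points on $X_L$ by supporting them over a single $L$-rational base point $Q$, reducing every Brauer evaluation to an evaluation at $Q$ which vanishes by global reciprocity. The remaining work is routine local computation, and the obstruction-explaining direction on $X/\QQ$ proceeds exactly as before.
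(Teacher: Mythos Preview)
Your proposal is correct and follows essentially the same route as the paper: produce adelic points on $X$ by combining $V_\infty(\QQ_v)$ for $v\neq 5$ with $V_1(L_{v'})$ at a place $v'\mid 5$; kill $X(\AA_\QQ)^{\Br}$ via functoriality and $E(\QQ)=pr^{\infty_\QQ}(E(\AA_\QQ)^{\Br})$; exhibit $V_1(\AA_L)\subset X_L(\AA_L)^{\Br}$ using that $\beta_L^*$ is an isomorphism and global reciprocity at $Q\in E(L)$; and rule out $X(L)$ fiber by fiber. One small correction to your closing commentary: the ``support over a single $L$-rational base point and invoke reciprocity'' device is not new to this example---it is exactly the argument used in the proof of Theorem \ref{theorem main result: non-invariance of the Hasse principle with BMO for odd degree} (only Theorem \ref{theorem main result: non-invariance of the Hasse principle with BMO for exist real place} needs the connected-real-component trick, because there the adelic point must mix fibers over two different base points).
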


	\begin{proof}
		Let $\sigma$ be the generator element of Galois group $\Gal(L/K).$
		We will show $X(\AA_K)\neq \emptyset.$
		By our construction, each $K$-rational fiber of $\beta$ is isomorphic to $V_\infty.$ By Lemma \ref{example4 V_infty has no local point}, the set $V_\infty(\AA_K^{\{5\}})\neq \emptyset.$ So $X(\AA_K^{\{5\}})\neq \emptyset.$ For $5$ splits completely in $L,$ take a place $v'\in \Omega_L$ above $5,$ i.e. $v'|5$ in $L.$ Then $\QQ_{5}\cong L_{v'}.$ By Lemma \ref{example4 V_1 exists local point}, the set $V_1(\AA_L)\neq \emptyset.$ 	
		Since $V_1\bigsqcup \sigma(V_1)\cong \bigsqcup_{P\in E(L)\backslash E(K) }\beta^{-1}(P)\subset X_L,$
		 the set  $X(\QQ_5)=X(L_{v'})\supset V_1(L_{v'})\neq \emptyset.$ So $X(\AA_K)\neq \emptyset.$	\\ 
		We will show $X(\AA_K)^{\Br}=\emptyset.$
		For $E(K)=pr^{\infty_K}(C(\AA_K)^{\Br}),$ the functoriality of Brauer-Manin pairing implies  
		$pr^{\infty_K}(X(\AA_K)^{\Br})\subset \bigcup_{P\in  E(K) }\beta^{-1}(P) (\AA_K^{\infty_K}).$ By Lemma \ref{example4 V_infty has no local point}, the set $V_\infty(\QQ_5)=\emptyset,$ so
		$pr^{\infty_K}(X(\AA_K)^{\Br})\subset \bigcup_{P\in  E(K) }\beta^{-1}(P) (\AA_K^{\infty_K})\cong V_\infty(\AA_K^{\infty_K})=\emptyset,$ which implies that $ X(\AA_K)^{\Br}=\emptyset.$\\
		So, the variety $X$ is a counterexample to the Hasse principle, and its failure of the Hasse principle is explained by the Brauer-Manin obstruction.
		
		We will show $X_L(\AA_L)^{\Br}\neq \emptyset.$
		By Proposition \ref{Poonen's main proposition}, the map $\alpha_L^*\colon\Br(C_L)\to \Br(X_L)$ is an isomorphism. By the functoriality of Brauer-Manin pairing, the set  $X_L(\AA_L)^{\Br}$ contains $V_1(\AA_L),$ which is nonempty.\\
		We will show $X(L)=\emptyset.$
		By Lemma \ref{example4 V_1 has no rational point}, the set $V_1(L)=\emptyset,$ so also $\sigma(V_1)(L)=\emptyset.$ For $5$ splits completely in $L,$ the emptiness of $V_\infty(\QQ_5)$ implies $V_\infty(\AA_L)=\emptyset.$ Since $X(L)\subset\bigsqcup_{P\in E(L) }\beta^{-1}(P)(L)\cong V_\infty(L)\bigcup V_1(L) \bigcup \sigma(V_1)(L),$ the set $X(L)=\emptyset.$ \\
		So, the variety $X_L$ is a counterexample to the Hasse principle, but its failure of the Hasse principle cannot be explained by the Brauer-Manin obstruction.
	\end{proof}

	The $3$-fold $X$ that we constructed, has an affine open subvariety defined by the following equations, which is closed subvariety of $\AA^5$ with affine coordinates $(x,y,z,x',y').$
	\begin{equation*}
		\begin{cases}
			y^2+15z^2=(x^4-10x^2+15)({y'}^2+32)/8-(5x^4-39x^2+75)y'/2\\ 
			{y'}^2={x'}^3-16
		\end{cases}
	\end{equation*}

	\begin{footnotesize}
		\noindent\textbf{Acknowledgements.} The author would like to thank my thesis advisor Y. Liang for proposing the related problems, papers and many fruitful discussions.  The author was partially supported by NSFC Grant No. 12071448.
	\end{footnotesize}

\begin{bibdiv}
	\begin{biblist}
		
		\bib{CF67}{book}{
			author={Cassels, J.},
			author={Fr{\"o}hlich, A.},
			title={Algebraic number theory},
			publisher={Academic Press},
			date={1967},
		}
		
		\bib{CT10}{article}{
			author={Colliot-Th\'el\`ene, J.-L.},
			title={Z\'ero-cycles de degr\'e 1 sur les solides de {P}oonen},
			language={French},
			date={2010},
			journal={Bull. Soc. Math. France},
			volume={138},
			pages={249\ndash 257},
		}
		
		\bib{CTPS16}{article}{
			author={Colliot-Th\'el\`ene, J.-L.},
			author={P\'al, A.},
			author={Skorobogatov, A.},
			title={Pathologies of the {B}rauer-{M}anin obstruction},
			date={2016},
			journal={Math. Z.},
			volume={282},
			pages={799\ndash 817},
		}
		
		\bib{CTSSD87a}{article}{
			author={Colliot-Th\'el\`ene, J.-L.},
			author={Sansuc, J.-J.},
			author={Swinnerton-Dyer, S.},
			title={Intersections of two quadrics and {C}h\^atelet surfaces {I}},
			date={1987},
			journal={J. Reine Angew. Math.},
			volume={373},
			pages={37\ndash 107},
		}
		
		\bib{CTSSD87b}{article}{
			author={Colliot-Th\'el\`ene, J.-L.},
			author={Sansuc, J.-J.},
			author={Swinnerton-Dyer, S.},
			title={Intersections of two quadrics and {C}h\^atelet surfaces {II}},
			date={1987},
			journal={J. Reine Angew. Math.},
			volume={374},
			pages={72\ndash 168},
		}
		
		\bib{Gr68}{book}{
			author={Grothendieck, A.},
			title={Le groupe de {B}rauer {III}: {E}xemples et compl\'ements. {I}n},
			subtitle={Dix expos\'es sur la cohomologie des sch\'emas},
			language={French},
			series={Advanced Studies in Pure Mathematics},
			publisher={North-Holland},
			date={1968},
			volume={3},
			note={pp. 88-188},
		}
		
		\bib{GZ86}{article}{
			author={Gross, B.},
			author={Zagier, D.},
			title={Heegner points and derivatives of {L}-series},
			date={1986},
			journal={Invent. Math.},
			volume={84},
			pages={225\ndash 320},
		}
		
		\bib{Ha97}{book}{
			author={Hartshorne, R.},
			title={Algebraic geometry},
			series={Graduate Texts in Mathematics},
			publisher={Springer-Verlag},
			date={1997},
			volume={52},
		}
		
		\bib{HS14}{article}{
			author={Harpaz, Y.},
			author={Skorobogatov, A.},
			title={Singular curves and the \'etale {B}rauer-{M}anin obstruction for
				surfaces},
			date={2014},
			journal={Ann. Sci. \'{E}c. Norm. Sup\'er.},
			volume={47},
			pages={765\ndash 778},
		}
		
		\bib{Is71}{article}{
			author={Iskovskikh, V.},
			title={A counterexample to the {H}asse principle for systems of two
				quadratic forms in five variables},
			date={1971},
			journal={Mat. Zametki},
			volume={10},
			pages={253\ndash 257},
		}
		
		\bib{Ko90}{book}{
			author={Kolyvagin, V.},
			title={Euler systems. {I}n},
			subtitle={The {G}rothendieck festschrift {II}},
			series={Progress in Mathematics},
			publisher={Birkh{\"a}user},
			date={1990},
			volume={87},
			note={pp. 435-483},
		}
		
		\bib{Ko91}{book}{
			author={Kolyvagin, V.},
			title={On the {M}ordell-{W}eil and the {S}hafarevich-{T}ate group of
				modular elliptic curves. {I}n},
			subtitle={Proceedings of the international congress of mathematicians},
			publisher={Springer-Verlag},
			date={1991},
			volume={I},
			note={pp. 429-436},
		}
		
		\bib{Li18}{article}{
			author={Liang, Y.},
			title={Non-invariance of weak approximation properties under extension
				of the ground field},
			date={2018},
			journal={Preprint, arXiv:1805.08851v1 [math.NT]},
		}
		
		\bib{Ma71}{book}{
			author={Manin, Y.},
			title={Le groupe de {B}rauer-{G}rothendieck en g\'eom\'etrie
				diophantienne. {I}n},
			subtitle={Actes du {C}ongr\`es {I}nternational des {M}ath\'ematiciens},
			language={French},
			publisher={Gauthier-Villars},
			date={1971},
			volume={1},
			note={pp. 401-411},
		}
		
		\bib{Ne99}{book}{
			author={Neukirch, J.},
			title={Algebraic number theory},
			publisher={Springer-Verlag},
			date={1999},
		}
		
		\bib{Po09}{article}{
			author={Poonen, B.},
			title={Existence of rational points on smooth projective varieties},
			date={2009},
			journal={J. Eur. Math. Soc.},
			volume={11},
			pages={529\ndash 543},
		}
		
		\bib{Po10}{article}{
			author={Poonen, B.},
			title={Insufficiency of the {B}rauer-{M}anin obstruction applied to
				\'etale covers},
			date={2010},
			journal={Ann. of Math.},
			volume={171},
			pages={2157\ndash 2169},
		}
		
		\bib{Sc99}{article}{
			author={Scharaschkin, V.},
			title={Local-global problems and the {B}rauer-{M}anin obstruction},
			date={1999},
			journal={Thesis, University of Michigan},
		}
		
		\bib{Se73}{book}{
			author={Serre, J.-P.},
			title={A course in arithmetic},
			series={Graduate Texts in Mathematics},
			publisher={Springer-Verlag},
			date={1973},
			volume={7},
		}
		
		\bib{Se79}{book}{
			author={Serre, J.-P.},
			title={Local fields},
			series={Graduate Texts in Mathematics},
			publisher={Springer-Verlag},
			date={1979},
			volume={67},
		}
		
		\bib{Si09}{book}{
			author={Silverman, J.},
			title={The arithmetic of elliptic curves},
			series={Graduate Texts in Mathematics},
			publisher={Springer-Verlag},
			date={2009},
			volume={106},
		}
		
		\bib{Sk01}{book}{
			author={Skorobogatov, A.},
			title={Torsors and rational points},
			series={Cambridge Tracts in Mathematics},
			publisher={Cambridge University Press},
			date={2001},
			volume={144},
		}
		
		\bib{Sk99}{article}{
			author={Skorobogatov, A.},
			title={Beyond the {M}anin obstruction},
			date={1999},
			journal={Invent. Math.},
			volume={135},
			pages={399\ndash 424},
		}
		
		\bib{St07}{article}{
			author={Stoll, M.},
			title={Finite descent obstructions and rational points on curves},
			date={2007},
			journal={Algebra Number Theory},
			volume={1},
			pages={349\ndash 391},
		}
		
		\bib{Wa96}{article}{
			author={Wang, L.},
			title={Brauer-{M}anin obstruction to weak approximation on abelian
				varities},
			date={1996},
			journal={Israel J. Math.},
			volume={94},
			pages={189\ndash 200},
		}
		
	\end{biblist}
\end{bibdiv}

\end{document}